\newtheorem{THM}{Theorem}[section]
\newtheorem{LEM}[THM]{Lemma}
\newtheorem{COR}[THM]{Corollary}
\newtheorem{DEF}[THM]{Definition}
\newtheorem{EX}[THM]{Example}
\def\shift(#1)(#2){\!\!\downarrow\!{}^{#1}_{\raise .1ex\vbox to 0pt{\vss\hbox{$\scriptstyle #2$}}}\,}
\def\ucl(#1){\lfloor #1 \rfloor}
\def\dcl(#1){\lceil #1 \rceil}
\def\specrel#1#2{\mathrel{\mathop{\kern0pt #1}\limits_{#2}}}
\def\restricts{\!\restriction\!}
\def\interior{\mathaccent"7017\relax}
\def\F{\mathcal F}
\newcommand\N{\mathcal N}
\renewcommand\O{\mathcal O}
\renewcommand\P{\mathcal P}
\newcommand\Q{\mathcal Q}
\newcommand\V{\mathcal V}
\def\lowfwd #1#2#3{{\mathop{\kern0pt #1}\limits^{\kern#2pt\raise.#3ex
\vbox to 0pt{\hbox{$\scriptscriptstyle\rightarrow$}\vss}}}}
\def\lowbkwd #1#2#3{{\mathop{\kern0pt #1}\limits^{\kern#2pt\raise.#3ex
\vbox to 0pt{\hbox{$\scriptscriptstyle\leftarrow$}\vss}}}}
\def\fwd #1#2{{\lowfwd{#1}{#2}{15}}}
\def\ve{\kern-1.5pt\lowfwd e{1.5}2\kern-1pt}
\def\vedash{{\mathop{\kern0pt e\lower.5pt\hbox{${}
     \scriptstyle'$}}\limits^{\kern0pt\raise.02ex
     \vbox to 0pt{\hbox{$\scriptscriptstyle\rightarrow$}\vss}}}}
\def\ev{\kern-1pt\lowbkwd e{0.5}2\kern-1pt}
\def\vf{\kern-2pt\lowfwd f{2.5}2\kern-1pt}
\def\vfdash{{\mathop{\kern0pt f\raise 1pt\hbox{${}
     \scriptstyle'$}}\limits^{\kern2pt\raise.02ex
     \vbox to 0pt{\hbox{$\scriptscriptstyle\rightarrow$}\vss}}}}
\def\fv{\lowbkwd f01}
\def\vr{\lowfwd r{1.5}2}
\def\rv{\lowbkwd r02}
\def\vrdash{{\mathop{\kern0pt r\lower.5pt\hbox{${}
     \scriptstyle'$}}\limits^{\kern0pt\raise.02ex
     \vbox to 0pt{\hbox{$\scriptscriptstyle\rightarrow$}\vss}}}}
\def\rvdash{{\mathop{\kern0pt r\lower.5pt\hbox{${}
     \scriptstyle'$}}\limits^{\kern0pt\raise.02ex
     \vbox to 0pt{\hbox{$\scriptscriptstyle\leftarrow$}\vss}}}}
\def\amgis{\lowbkwd \sigma02}
\def\vs{\lowfwd s{1.5}1}
\def\sv{\lowbkwd s{1.5}1}
\def\vsdash{{\mathop{\kern0pt s\lower.5pt\hbox{${}
     \scriptstyle'$}}\limits^{\kern0pt\raise.02ex
     \vbox to 0pt{\hbox{$\scriptscriptstyle\rightarrow$}\vss}}}}
\def\svdash{{\mathop{\kern0pt s\lower.5pt\hbox{${}
     \scriptstyle'$}}\limits^{\kern0pt\raise.02ex
     \vbox to 0pt{\hbox{$\scriptscriptstyle\leftarrow$}\vss}}}}
\def\vsddash{{\mathop{\kern0pt s\lower.5pt\hbox{${}
     \scriptstyle''$}}\limits^{\kern0pt\raise.02ex
     \vbox to 0pt{\hbox{$\scriptscriptstyle\rightarrow$}\vss}}}}
\def\vso{\lowfwd {s_0}11}
\def\vsidash{{\mathop{\kern0pt s_i\kern-3.5pt\lower.3pt\hbox{${}
     \scriptstyle'$}}\limits^{\kern0pt\raise.02ex
     \vbox to 0pt{\hbox{$\scriptscriptstyle\rightarrow$}\vss}}}}
\def\vsk{\lowfwd {s_k}11}
\def\vS{{\hskip-1pt{\fwd S3}\hskip-1pt}} 
\def\vSr{{\vec S}_{\raise.1ex\vbox to 0pt{\vss\hbox{$\scriptstyle\ge\vr$}}}}
\def\vSstar{{\mathop{\kern0pt S\lower-1pt\hbox{$^*$}}\limits^{\kern2pt
     \vbox to 0pt{\hbox{$\scriptscriptstyle\rightarrow$}\vss}}}}
\def\vSdash{{\mathop{\kern0pt S\lower-1pt\hbox{${}
     \scriptstyle'$}}\limits^{\kern2pt\raise.1ex
     \vbox to 0pt{\hbox{$\scriptscriptstyle\rightarrow$}\vss}}}}
\def\vt{\lowfwd t{1.5}1}
\def\tv{\lowbkwd t{1.5}1}
\def\vT{{\fwd T1}}
\def\es{\emptyset}
\def\sub{\subseteq}
\def\supe{\supseteq}
\def\sm{\smallsetminus}
\def\td{tree-decom\-po\-si\-tion}
\newcommand\COMMENT[1]{}
\def\?#1{\vadjust{\vbox to 0pt{\vss\vskip-8pt\leftline{%
     \llap{\hbox{\vbox{\pretolerance=-1
     \doublehyphendemerits=0\finalhyphendemerits=0
     \hsize16truemm\tolerance=10000\small
     \lineskip=0pt\lineskiplimit=0pt
     \rightskip=0pt plus16truemm\baselineskip8pt\noindent
     \hskip0pt        
     #1\endgraf}\hskip7truemm}}}\vss}}}
\title{Tree sets}
 \author{Reinhard Diestel}
 \date{}
\begin{document}
\abovedisplayshortskip=-3pt plus3pt
\belowdisplayshortskip=6pt

\maketitle

\begin{abstract}\noindent
  We study an abstract notion of tree structure which lies at the common core of various tree-like discrete structures commonly used in combinatorics: trees in graphs, order trees, nested subsets of a set, \td s of graphs and matroids etc.

Unlike graph-theoretical or order trees, these {\em tree sets\/} can provide a suitable formalization of tree structure also for infinite graphs, matroids, and set partitions. Order trees reappear as oriented tree sets.%
   \COMMENT{}

We show how each of the above structures defines a tree set, and  which additional information, if any, is needed to reconstruct it from this tree set.\looseness=-1
   \end{abstract}

\section{Introduction}\label{sec:intro}

There are a number of concepts in combinatorics that express the tree-likeness of discrete%
   \footnote{There are also non-discrete such concepts, such as $\Bbb R$-trees, which are not our topic here.}
   structures. Among these are:
 \begin{itemize}\itemsep=0pt
\item graph trees
\item order trees
\item nested subsets, or bipartitions, of a set.
 \end{itemize}

\medbreak\noindent
Other notions of tree-likeness, such as \td s of graphs or matroids, are modelled on these.%
   \COMMENT{}

All these notions of tree-likeness work well in their own contexts, but sometimes less well outside them:
 \begin{itemize}\itemsep=0pt
   \item graph trees need vertices, which in some desired applications~-- even as close as matroids~-- may not exist;%
   \COMMENT{}
   \item order trees need additional poset structure which is more restrictive than the tree-likeness it implies;%
   \footnote{Finite order trees, for example, correspond to rooted graph trees, but there is nothing in their definition from which we can abstract so that what remains corresponds to the underlying unrooted graph tree.}
   \item nested sets of bipartitions require a ground set that can be partitioned, which does not exist, say, in the case of \td s of a graph;
   \item \td s of infinite graphs, which are modelled on graph trees, cannot describe separations that are limits of other separations, because graph trees do not have edges that are limits of other edges.
 \end{itemize}

\medbreak 

The purpose of this paper is to study an abstract notion of tree structure which is general enough to describe all these examples, yet substantial enough that each of these instances, in their relevant context, can be recovered from it.%
   \COMMENT{}

We shall introduce this abstract notion of `tree sets' formally in Section~\ref{sec:separations}. It builds on a more general notion of `abstract separation systems' developed in~\cite{AbstractSepSys}. That paper consists of no more than some basic notions and facts we need here anyway, and is thus required as preliminary reading.%
   \footnote{Reference~\cite{AbstractSepSys} started life as the preliminary sections of this paper, and it should be read first. The reason it was split off is that abstract separation systems have since been used in several other papers too, and will be in more to come. So it seemed sensible to have the basics collected together in one place.}
   In a nutshell, a {\em separation system\/} will just be a poset with an order-reversing involution, and {\em tree sets\/} will be%
   \COMMENT{}
    nested separation systems: sets in which every element is comparable with every other element or its inverse.

Tree-likeness has been modelled in many ways~\cite{WoessTreeLike}, and even the idea to formalize it in this abstract way is not new. Abstract nested separation systems as above were introduced by Dunwoody~\cite{DunwoodyProtrees93, DunwoodyProTrees97} under the name of `protrees', as an abstract structure for groups to act on, and used by Hundertmark~\cite{profiles} as a basis for structure trees of graphs and matroids that can separate their tangles and related substructures. They have not, however, been studied systematically~-- which is our purpose in this paper.

Although studying abstract tree sets may seem amply justified by their ubi\-qui\-ty in different contexts, there are two concrete applications that I would like to point out. The first of these is to order trees. These are often used in infinite combinatorics to capture tree structure wherever it arises. The reason they can do this better than graph-theoretical trees is that they may contain limit points, as those tree-like structures to be captured frequently do. However, order trees come with more information than is needed just to capture tree structure, which can make their use cumbersome. For a finite tree structure, for example, they correspond to a graph-theoretical tree together with the choice of a root. A~change of root will change the induced tree order but not the underlying graph tree, which already captures that finite tree structure.

It turns out that abstract tree sets can provide an analogue of this also for infinite order trees: these will correspond precisely to the {\em consistently oriented\/} tree sets. Just as different choices of a root turn the same graph-theoretical tree into related but different order trees, different consistent orientations of an abstract trees set yield related but different order trees.
Order trees thus appear as something like a category of `pointed tree sets',%
   \footnote{More precisely: tree sets plus a consistent orientation. The latter can be specified by a `point'~-- i.e., an element of the set~-- only when the order tree has a least element.}
   not only when their tree structure is represented by a graph but always, also when they have limit points. It thus becomes possible to `forget' the ordering of an order tree but retain more than a set: the set plus exactly the information that makes it tree-like.\looseness=-1

The application of tree sets that originally motivated this paper was one to graphs, as follows.
In graph minor theory there are duality theorems saying that a given finite graph either has a certain highly connected substructure, such as a bramble, or if not then this is witnessed by a \td\ showing that such a highly connected substructure cannot exist, because `there is no room for it'~\cite{TangleTreeGraphsMatroids, ST1993GraphSearching}. The edges of the decomposition tree then correspond to separations of this graph that form a tree set in our sense.

Conversely, a tree set of separations of a finite graph or matroid is always induced by a \td\ in this way. Thus, in finite graphs and matroids, \td s and tree sets of separations amount to the same thing.

But for infinite graphs, tree sets of separations are more powerful: they need not come from \td s, since separations can have limits but edges in decomposition trees do not. This is why width duality theorems for infinite graphs, such as those in~\cite{duality1inf}, require tree sets of separations, rather than \td s, to express the tree structures that witness the absence of highly connected substructures such as tangles or brambles. See~\cite{ProfiniteTreeSets} for more on this.

Highly cohesive substructures can be identified not only in graphs, but also in much more general combinatorial structures: all those that come with a sensible notion of `separation', and hence give rise to abstract separation systems. Their highly cohesive substructures then take the form of {\em tangles\/} of these separation systems: orientations of their separations that are consistent in various ways (all including the basic consistency considered for tree sets in this paper) that define the differences between these types of tangle~\cite{ProfileDuality, TangleTreeAbstract, TangleTreeGraphsMatroids, MonaLisa}. We thus obtain a wealth of duality theorems for potentially very different combinatorial structures all based on the abstract tree sets studied in this paper.

In Section~\ref{sec:separations} we provide just the formal definitions needed to state our theorems; for all the basic facts about abstract separation systems that we shall need in our proofs we refer the reader to~\cite{AbstractSepSys}. In Sections~\ref{sec:Gtrees}, \ref{sec:Otrees}, \ref{sec:Bipartitions} and~\ref{sec:Strees}, respectively, we show how abstract tree sets can be used to describe the tree structures of our earlier examples: of graph-theoretical trees, of order trees, of nested sets of bipartitions of a set, and of \td s of graphs and matroids. We shall also see how these representations of tree sets can be recovered from the tree sets they represent. Where relevant we shall point out how, conversely, abstract tree sets describe tree-like structures in these contexts that do not come from such examples: where tree sets provide not just a convenient common language for different kinds of tree structures but define new ones, including new ones that are needed for applications in traditional settings such as graphs and matroids~\cite{duality1inf}.

Any terminology used but not defined in either~\cite{AbstractSepSys} or this paper can be found in~\cite{DiestelBook16}.

\section{Abstract separation systems and tree sets%
   \protect\footnote{This section is provided only to make this paper formally self-contained; I~encourage the reader to read~\cite{AbstractSepSys} instead of this section, or at least to refer to~\cite{AbstractSepSys} in parallel.}%
   }\label{sec:separations}

A \emph{separation of a set} $V$ is a set $\{A,B\}$ such that $A\cup B=V$.%
   \footnote{We can make further requirements here that depend on some structure on~$V$ which $\{A,B\}$ is meant to separate. If $V$ is the vertex set of a graph~$G$, for example, we usually require that $G$ has no edge between $A\sm B$ and $B\sm A$. But such restrictions will depend on the context and are not needed here; in fact, even the separations of a {\em set}~$V$ defined here is just an example of the more abstract `separations' we are about to introduce.}
   The ordered pairs $(A,B)$ and $(B,A)$ are its {\it orientations\/}. The {\em oriented separations\/} of~$V$ are the orientations of its separations. Mapping every oriented separation $(A,B)$ to its {\it inverse\/} $(B,A)$ is an involution%
   \COMMENT{}
   that reverses the partial ordering \[(A,B)\le (C,D) :\Leftrightarrow A\subseteq C \text{ and } B\supseteq D,\]
since the above is equivalent to $(D,C)\le (B,A)$. Informally, we think of $(A,B)$ as \emph{pointing towards}~$B$ and \emph{away from}~$A$.

More generally, a {\em separation system\/} $(\vS,\le\,,\!{}^*)$ is a partially ordered set $\vS$ with an order-reversing involution\,*. Its elements are called {\em oriented separations\/}. An {\em isomorphism\/} between two separation systems is a bijection between their underlying sets that respects both their partial orderings and their involutions.

When a given element of $\vS$ is denoted as~$\vs$, its {\em inverse\/}~$\vs^*$ will be denoted as~$\sv$, and vice versa. The assumption that * be {\em order-reversing\/} means that, for all $\vr,\vs\in\vS$,
\begin{equation}\label{invcomp}
\vr\le\vs\ \Leftrightarrow\ \rv\ge\sv.
\end{equation}

A {\em separation\/} is a set of the form $\{\vs,\sv\}$, and then denoted by~$s$. We call $\vs$ and~$\sv$ the {\em orientations\/} of~$s$. The set of all such sets $\{\vs,\sv\}\sub\vS$ will be denoted by~$S$. If $\vs=\sv$, we call both $\vs$ and $s$ {\em degenerate\/}.

When a separation is introduced ahead of its elements and denoted by a single letter~$s$, we shall use $\vs$ and~$\sv$ (arbitrarily)%
   \COMMENT{}
   to refer to its elements. Given a set $S'$ of separations, we write $\vSdash := \bigcup S'\sub\vS$%
   \COMMENT{}
   for the set of all the orientations of its elements. With the ordering and involution induced from~$\vS$, this is again a separation system.%
   \COMMENT{}%
   \footnote{When we refer to oriented separations using explicit notation that indicates orientation, such as $\vs$ or $(A,B)$, we sometimes leave out the word `oriented' to improve the flow of words. Thus, when we speak of a `separation $(A,B)$', this will in fact be an oriented separation.}

Separations of sets, and their orientations, are clearly an instance of this abstract setup if we identify $\{A,B\}$ with $\{(A,B),(B,A)\}$.

A separation $\vr\in\vS$ is {\em trivial in~$\vS$\/}, and $\rv$ is {\em co-trivial\/}, if there exists $s \in S$ such that $\vr < \vs$ as well as $\vr < \sv$.%
   \COMMENT{}
   We call such an $s$ a {\em witness\/} of $\vr$ and its triviality. If neither orientation of~$r$ is trivial, we call~$r$ {\em nontrivial\/}.%
   \COMMENT{}

Note that if $\vr$ is trivial in~$\vS$ then so is every $\vrdash \le \vr$. If $\vr$ is trivial, witnessed by~$s$, then $\vr < \vs < \rv$ by~\eqref{invcomp}. Hence if $\vr$ is trivial, then $\rv$ cannot be trivial. In particular, degenerate separations are nontrivial.%
   \COMMENT{}

There can also be separations $\vs$ with $\vs < \sv$ that are not trivial.%
   \COMMENT{}
   But any\-thing smaller than these is again trivial: if $\vr < \vs\le\sv$, then $s$ witnesses the triviality of~$\vr$. Separations~$\vs$ such that $\vs\le\sv$, trivial or not, will be called {\em small\/}; note that, by~\eqref{invcomp}, if $\vs$ is small then so is every~$\vsdash\le\vs$.

The trivial oriented separations of a set~$V\!$, for example, are those of the form $\vr = (A,B)$ with $A\sub C\cap D$ and $B\supe C\cup D = V$ for some $s = \{C,D\}\ne r$ in the set $S$ considered. The small separations $(A,B)$ of $V$ are all those with $B=V$.\looseness=-1

\begin{DEF}
 A~separation system is {\em regular\/} if it has no small elements.%
   \COMMENT{}
   It is {\em essential\/} if it has neither trivial elements nor degenerate elements.
\end{DEF}

Note that all regular separation systems are essential.%
   \COMMENT{}

\begin{DEF}
The {\em essential core\/} of a separation system~$\vS$ is the essential separation system~$\vSdash$ obtained from~$\vS$ by deleting all its separations that are degenerate, trivial, or co-trivial in~$\vS$.%
   \COMMENT{}
\end{DEF}

An essential but irregular separation system can be made regular by deleting all pairs of the form $(\vs,\sv)$ from the relation~$\le$ viewed as a subset of~$\vS{}^2$: the triple  $(\vS,\le',\!{}^*)$, where $\vr<'\!\vs$ if and only if $\vr<\vs$ and $r\ne s$, is a regular separation system~\cite{AbstractSepSys}. We call it the {\em regularization\/} of~$\vS$.

A set $O\sub \vS$ of oriented separations is {\em antisymmetric\/} if $|O\cap \{\vs,\sv\}| \le 1$ for all $\vs\in\vS$: if $O$ does not contain the inverse of any of its nondegenerate elements.%
   \COMMENT{}
We call~$O$ \emph{consistent} if there are no distinct $r,s\in S$ with orientations $\vr < \vs$ such that $\rv,\vs\in O$.%
   \COMMENT{}

\medbreak

Two separations $r,s$ are {\em nested\/} if they have comparable orientations; otherwise they \emph{cross}.
Two oriented separations $\vr,\vs$ are {\em nested\/} if $r$ and~$s$ are nested.%
   \footnote{Terms introduced for unoriented separations may be used informally for oriented separations too if the meaning is obvious, and vice versa.}%
   \COMMENT{}
   We say that $\vr$ {\em points towards\/}~$s$, and $\rv$ {\em points away from\/}~$s$, if $\vr\le\vs$ or $\vr\le\sv$.

In this informal terminology, two oriented separations are nested if and only if they are either comparable or point towards each other or point away from each other. And a set $O\sub\vS$ is consistent if and only if it does not contain orientations of distinct separations that point away from each other.

A~set of separations is {\em nested\/} if every two of its elements are nested. 

\begin{DEF}
A {\em tree set\/} is a nested essential separation system. An {\em isomorphism of tree sets\/} is an isomorphism of separation systems that happen to be tree sets.
\end{DEF}

\noindent
The essential core of a nested separation system~$\vS$ is the tree set {\em induced by}~$\vS$.

\begin{DEF}
A {\em star (of separations)\/} is a set $\sigma$ of nondegenerate oriented separations whose elements point towards each other:  $\vr\le\sv$ for all distinct $\vr,\vs\in\sigma$.
\end{DEF}

 We allow $\sigma=\es$. Note that stars of separations are nested. They are also consistent: if distinct $\rv,\vs$ lie in the same star we cannot have $\vr < \vs$, since also $\vs\le \vr$ by the star property.

A star $\sigma$ is {\em proper\/} if, for all distinct%
   \COMMENT{}
   $\vr,\vs\in\sigma$, the relation $\vr\le \sv$ required by the definition of `star' is the only one among the four possible relations between orientations of distinct $r$ and~$s$: if $\vr \le \sv$ but $\vr\not\le\vs$ and $\vr\not\ge\vs$ and $\vr\not\ge\sv$.%
   \COMMENT{}

Our partial ordering on~$\vS$ also relates its subsets, and in particular its stars: for $\sigma,\tau\sub\vS$ we write $\sigma\le\tau$ if for every $\vs\in\sigma$ there exists some $\vt\in\tau$ with $\vs\le\vt$.%
   \COMMENT{}
    This relation is obviously reflexive and transitive, but in general it is not antisymmetric: if $\sigma$ contains separations $\vs < \vt$, then for $\tau = \sigma\sm\{\vs\}$ we have $\sigma < \tau < \sigma$ (where $<$ denotes `$\le$ but not~=').%
   \COMMENT{}
   However, it is antisymmetric on antichains, and thus in particular on proper stars~\cite{AbstractSepSys}.

We call a star $\sigma\in\vS$ {\em proper in~$\vS$} if it is proper and is not a singleton~$\{\sv\}$ with $\sv$ co-trivial in~$\vS$. We shall call such stars {\em co-trivial singletons\/}.

When we speak of {\em maximal proper stars in\/} a separation system~$(\vS,\le\,,\!{}^*)$, we shall always mean stars that are $\le$-maximal in the set of stars that are proper in~$\vS$. Maximal stars in tree sets will play a key role in describing their structure.

We refer to~\cite{AbstractSepSys} for examples and various properties of stars in separation systems that we shall use throughout this paper.

\medbreak

An \emph{orientation} of a separation system~$\vS$,%
   \COMMENT{}
   or of a set~${S}$ of separations, is a set $O\sub{\vS}$ that contains for every $s\in{S}$ exactly one of its orientations $\vs,\sv$. A~\emph{partial orientation} of~${S}$ is an orientation of a subset of~${S}$: an antisymmetric subset of~$\vS$. We shall be interested particularly in consistent orientations.%
   \COMMENT{}

Every consistent orientation $O$ of a regular separation system~$\vS$ is the {\em down-closure\/}
 $$\dcl(\sigma)_{\vS} := \{\, \vr\in\vS\mid\exists \vs\in\sigma\colon \vr\le\vs\,\}$$
 in~$\vS$ of the set $\sigma$ of its maximal elements~-- provided that every element of~$O$ lies below some maximal element (which can fail when $\vS$ is infinite). If~$\vS$ is a tree set, then these sets~$\sigma$ are its maximal proper stars; we call them the {\em splitting stars\/} of~$\vS$.

See~\cite{AbstractSepSys} for proofs of these assertions and further background needed later.

\section{Tree sets from graph-theoretical trees}\label{sec:Gtrees}

The set
 $$\vec E(T) := \{\, (x,y) : xy\in E(T)\,\}$$
 of all \emph{orientations} $(x,y)$ of the edges $xy=\{x,y\}$ of a tree~$T$ form a regular tree set with respect to the involution $(x,y)\mapsto (y,x)$ and the \emph{natural partial ordering} on~$\vec E(T)$: the ordering in which $(x,y) < (u,v)$ if $\{x,y\}\ne\{u,v\}$ and the unique $\{x,y\}$--$\{u,v\}$ path in $T$ joins $y$ to~$u$. We call this is the {\it edge tree set\/} $\tau(T)$ of~$T$. For every node~$t$ of~$T$, we call the set
   $$\vec F_t := \{(x,t) : xt\in E(T)\}$$
 of edges at~$t$ and oriented towards~$t$ the {\em oriented star at~$t$} in~$T$.

\begin{LEM}\label{splittingstars}
The sets~$\vec F_t$ are the splitting stars of the edge tree set~$\tau(T)$ of~$T$.%
   \COMMENT{}
\end{LEM}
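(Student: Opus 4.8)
The plan is to show directly that the sets $\vec F_t$ are exactly the maximal proper stars of $\tau(T)$, since by the definition of splitting star (the last displayed notion in Section~\ref{sec:separations}) that is what has to be verified. I would proceed in three steps: first check each $\vec F_t$ is a proper star in $\tau(T)$; second check maximality; third check that every maximal proper star has this form.

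First I would verify that $\vec F_t$ is a star. Given distinct edges $xt,yt$ at~$t$, the unique path between them in $T$ is just $x\,t\,y$, so by the definition of the natural partial ordering $(x,t) < (t,y)$, i.e.\ $(x,t)\le (y,t)^* $; doing this for every pair shows $\vec F_t$ is a star, and since each edge of a tree is nondegenerate and $T$ has no loops or parallel edges the star is proper (none of the other three relations can hold, as $\{x,t\}\ne\{y,t\}$ and neither edge's path to the other reverses). One should also note $\vec F_t$ is not a co-trivial singleton: if $t$ has degree~$1$, with unique neighbour~$x$, then $(x,t)$ is small but, $T$ being a tree, there is no edge $\{C,D\}$ witnessing co-triviality (an edge $(u,v)$ with $(x,t)<(u,v)$ and $(x,t)<(v,u)$ would force the path from $\{x,t\}$ to both ends of $uv$, impossible). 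So $\vec F_t$ is proper in~$\tau(T)$.

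Second, maximality. Suppose $\vec F_t \le \sigma$ for a proper star $\sigma$ in $\tau(T)$; by antisymmetry of $\le$ on proper stars it suffices to show $\sigma\le\vec F_t$, or more concretely that every $(u,v)\in\sigma$ satisfies $(u,v)\le(x,t)$ for some edge $xt$ at~$t$. Fix $(u,v)\in\sigma$. Since $\vec F_t\le\sigma$, pick for the edge at~$t$ on the $t$--$v$ side (say the first edge $tt'$ of the path from $t$ to $v$, or any edge at $t$ if $v=t$) a separation $\vec g\in\sigma$ with $(t',t)^*=(t,t')\le \vec g$; here I would use the star property of $\sigma$ to force $(u,v)\le \vec g^{\,*}$, and then a short case analysis on where the path from $\{u,v\}$ to the edge $g$ lies, together with the fact that every edge on the path from $v$ back towards $t$ is "captured" by some member of $\vec F_t$, to conclude $(u,v)\le (t',t)$ or $(u,v)\le(x,t)$ for the appropriate edge at~$t$. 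The cleanest way is probably the contrapositive: if $(u,v)\not\le (x,t)$ for every neighbour $x$ of $t$, then the path from $v$ to $t$ leaves $t$ through some edge $tx_0$, so $(x_0,t)\le (u,v)$ while also $(x_0,t)\le(v,u)$ (the path from $\{x_0,t\}$ to $\{u,v\}$ reaches $v$ first from both ends is wrong — rather, one gets $(x_0,t)$ below one orientation and checks the star condition), contradicting that $\sigma$ is a star containing both $(u,v)$ and a separation above $(x_0,t)$. Hence $\sigma=\dcl(\vec F_t)\cap\sigma$ forces $\sigma$ and $\vec F_t$ to have the same maximal elements, so $\sigma = \vec F_t$ by antisymmetry.

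Third, I would show every splitting star, i.e.\ every maximal proper star $\sigma$ of $\tau(T)$, is some $\vec F_t$. Take a maximal element $(u,v)\in\sigma$. The edge $uv$ points towards $v$; consider the edges of $T$ at the node $v$. I claim $\sigma\le \vec F_v$: for any $(a,b)\in\sigma$, if $(a,b)=(u,v)$ then $(u,v)\in\vec F_v$ already; otherwise, by the star property $(a,b)\le(v,u)$, and I would argue the path from $\{a,b\}$ to $uv$ must pass through $v$ (using that $(u,v)$ is $\le$-maximal in $\sigma$ and the tree structure), so the last edge before $b$ on $\ldots$ — more simply, $(a,b)\le (x,v)$ for the edge $xv$ that the $v$--$b$ path uses, which lies in $\vec F_v$. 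Hence $\sigma\le\vec F_v$, and since $\vec F_v$ is itself a proper star in $\tau(T)$ (first step) and $\sigma$ is maximal, we get $\sigma = \vec F_v$.

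The main obstacle I expect is the middle step: carefully unwinding the natural partial ordering on $\vec E(T)$ — which is phrased in terms of the unique path between two edges — to check the inequalities $(x,t)\le(u,v)^*$ and their failures, and making sure the star/maximality arguments do not secretly assume $T$ is finite (they should not, since paths between edges in a tree are finite regardless). The degree-$1$ case, ensuring $\vec F_t$ is "proper in $\tau(T)$" and not an excluded co-trivial singleton, is a small point that is easy to overlook but genuinely uses that $\tau(T)$ is regular, hence has no co-trivial separations at all.
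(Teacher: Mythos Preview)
Your approach differs from the paper's and is considerably more complicated than necessary.

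You have misread the definition. In the last paragraph of Section~\ref{sec:separations}, ``these sets~$\sigma$'' refers back to the previous sentence: a splitting star is, by definition, the set of maximal elements of some consistent orientation~$O$ with $O\sub\dcl(\sigma)$. The clause ``these sets~$\sigma$ are its maximal proper stars'' is a \emph{claim} about splitting stars (proved in~\cite{AbstractSepSys}), not the definition. The paper's proof accordingly works directly with consistent orientations: the down-closure of~$\vec F_t$ is visibly a consistent orientation of~$E(T)$ whose set of maximal elements is exactly~$\vec F_t$; conversely, given a splitting star~$\sigma$ with its orientation~$O$, pick any maximal $(x,t)\in O$, note that maximality forces $(t,y)\notin O$ and hence $(y,t)\in O$ for every neighbour~$y$ of~$t$, so $\vec F_t\sub O$, whence $O=\dcl(\vec F_t)$ and $\sigma=\vec F_t$. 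That is the entire argument.

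Your route via maximal proper stars can be made to work for~$\tau(T)$ (which is regular, so the exceptional case of \cite[Lemma~4.5]{AbstractSepSys} does not arise), but the execution has problems. First, a factual error: you write that for a leaf~$t$ with neighbour~$x$, ``$(x,t)$ is small''. It is not; $\tau(T)$~is regular and has no small elements whatsoever, which also disposes of the co-trivial singleton issue in one line. Second, your maximality step is genuinely incomplete: you flag a wrong turn mid-sentence and never actually establish $\sigma\le\vec F_t$. Third, your final step likewise trails off. In fact the cleanest way to repair either step is to observe that $\dcl(\vec F_t)$ already orients all of~$E(T)$ --- at which point you have reproduced the paper's argument inside a longer wrapper.
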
%
   \COMMENT{}

\proof Let $T$ be any tree. The down-closure in $\tau = \tau (T)$ of a set $\vec F_t\sub \vec E(T)$ is clearly a consistent orientation of~$E(T)$ whose set of maximal elements is precisely~$\vec F_t$.

Conversely, let ${\sigma\sub\vec E(T)}$ split~$\tau$. Then $\sigma$ is the set of maximal elements of some consistent orientation $O$ of~$E(T)$, and $O\sub\dcl(\sigma)$. In particular, $\sigma\ne\es$%
   \COMMENT{}
   unless $E(T)=\es$ (in which case the assertion is true),%
   \COMMENT{}
  so $O$ has a maximal element~$(x,t)$. For every neighbour $y\ne x$ of~$t$, the maximality of $(x,t)$ in~$O$ implies that $(t,y)\notin O$ and hence $(y,t)\in O$.

Thus, $\vec F_t\sub O$. As $O$ is closed down in~$\tau$ and the down-closure of $\vec F_t$ in~$\tau$ orients all of~$E(T)$, this down-closure equals~$O$ and has $\vec F_t$ as its set of maximal elements, giving $\sigma = \vec F_t$ as desired.
\endproof

Lemma~\ref{splittingstars} allows us to recover a tree $T$ from its edge tree set~$\tau$. Indeed, given just~$\tau$, let $V$ be the set of its splitting stars~$\sigma$. Define a graph $G$ on~$V$ by taking $\tau$ as its set of oriented edges and assigning to every edge $\vs$ the splitting star of~$\tau$ containing it as its terminal node. These are well defined~-- i.e., every $\vs\in\tau$ lies in a unique splitting star~-- by Lemma~\ref{splittingstars} and our assumption that $\tau = \tau(T)$. Then, clearly, the map $t\mapsto \vec F_t$ is a graph isomorphism between $T$ and~$G$.

Our assumption above that $\tau$ is the edge tree set of {\em some\/} tree was used heavily in the argument above. And indeed, it cannot be omitted altogether: an arbitrary tree set need not be realizable as the edge tree set of a tree.%
   \COMMENT{}
   Finite regular tree sets are, though, and our next aim is to prove this in Theorem~\ref{Trees}.

Given a tree set~$\tau$,%
   \COMMENT{}
   write $\O=\O(\tau)$ for the set of its consistent orientations. Define a directed graph $\vT$ with edge set $E(\vT) = \tau$ as follows. For each $\vs\in\tau$ there is a unique $t\!_{\vs}\in\O$ in which $\vs$~is maximal, by \cite[Lemma~4.1\,(iii)]{AbstractSepSys} applied with $P=\{\vs\}$. Let $\vT = \vT(\tau)$ be the directed graph on $\{\,t\!_{\vs}\mid \vs\in\tau\,\}\sub\O$ with edge set~$\tau$, where $\vs\in\tau$ runs from $t\!_{\sv}$ to~$t\!_{\vs}$. Note that these are distinct,%
   \COMMENT{}
   since $\vs\ne\sv$ as $\tau$ has no degenerate elements. Let $T = T(\tau)$ be the underlying undirected graph, with pairs $\vs,\sv$ of directed edges identified into one undirected edge~$s$ inheriting its orientations $\vs,\sv$ from~$\vT$.%
   \COMMENT{}

Thus, $\vec E(T) = E(\vT)$. In fact, let us remember for the definition of~$\vec E(T) = \tau(T)$ also the information from~$E(\vT)$ of which orientation of an edge $s$ of~$T$ is $\vs$ and which is~$\sv$. Unlike with arbitrary tree sets, the elements of tree sets of the form $\tau(T(\tau'))$ thus come with fixed names: those inherited from~$\tau'$ as defined above.

\begin{LEM}\label{Ttau}
For every finite tree set~$\tau$,%
   \COMMENT{}
   the graph $T(\tau)$ is a tree on~$\O(\tau)$.
\end{LEM}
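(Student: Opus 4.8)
The plan is to show that $T = T(\tau)$ is connected and acyclic, and that its vertex set is all of $\O(\tau)$. For the vertex set: by definition the nodes of $\vT(\tau)$ are the orientations $t\!_{\vs}$ for $\vs\in\tau$, so I first need to check that every consistent orientation of $\tau$ arises as some $t\!_{\vs}$. Given a consistent orientation $O$ of the finite tree set $\tau$, it has a maximal element $\vs$ (finiteness), and since $\tau$ is a tree set $O$ must be the down-closure of its splitting star; but a finite consistent orientation whose maximal elements form a star containing $\vs$ is determined by that star, and I claim it equals $t\!_{\vs}$ — more carefully, I would argue that $O$ and $t\!_{\vs}$ are two consistent orientations of $\tau$ both having $\vs$ maximal, and invoke the uniqueness clause of \cite[Lemma~4.1\,(iii)]{AbstractSepSys} (already used to define $t\!_{\vs}$) to conclude $O = t\!_{\vs}$. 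Hence $V(T) = \O(\tau)$.

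For acyclicity: suppose $T$ contained a cycle, with edges $s_0, s_1, \dots, s_{k-1}$ and successive nodes $O_0, O_1, \dots, O_{k-1}, O_0 \in \O(\tau)$. Orienting each edge $s_i$ along the cycle gives oriented separations $\vs_i$ with $\vs_i$ running from $O_i$ to $O_{i+1}$ (indices mod $k$), so by construction $t\!_{\vs_i} = O_{i+1}$ and $t\!_{\sv_i} = O_i$. I want to derive a contradiction from the fact that consecutive nodes share membership facts: $\vs_i \in O_{i+1}$ and, since $O_{i+1} = t\!_{\vs_{i+1}}$ has $\vs_{i+1}$ maximal while also containing $\sv_i$... the cleanest route is to compare the two orientations $O_0$ and, say, $O_1$: they differ in exactly the separations forced to flip across the edge $s_0$, and walking around the cycle and back must return the same orientation, which forces an even interplay that a tree-set's nestedness forbids. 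Concretely I expect to show $\vs_0 \le \vs_1 \le \cdots$ or a similar chain around the cycle using that each $O_{i+1}$ orients $s_i$ "forward" and $s_{i+1}$ "forward" consistently, yielding $\vs_0 < \vs_0$, a contradiction. This is the step I expect to be the main obstacle: pinning down exactly why a closed walk in $\vT(\tau)$ cannot exist, i.e. extracting the right comparabilities from the definitions of $t\!_{\vs}$ and from consistency/nestedness. I anticipate it reduces to: if $\vs$ runs from $O$ to $O'$ then $O' = (O \setminus \{\sv\}) \cup \{\vs\}$ modulo the separations below, and two distinct such single-edge moves cannot compose into the identity around a cycle.

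For connectedness: I would show that between any two consistent orientations $O, O'$ of $\tau$ there is a path in $T$, by induction on the (finite) number of separations $s$ on which $O$ and $O'$ disagree. If they disagree on some $s$, pick such an $s$ with $\vs \in O$, $\sv \in O'$ that is $\le$-maximal among the disagreements as oriented by $O$; then $\vs$ should be maximal in $O$ (anything above it in $\tau$ is oriented the same way by $O$ and $O'$ by maximality of the disagreement, together with consistency), so $O = t\!_{\vs}$, and flipping $\vs$ to $\sv$ together with everything forced below yields a consistent orientation $O''$ adjacent to $O$ in $T$ (the edge $s$, traversed from $t\!_{\vs}=O$ to $t\!_{\sv}=O''$) with strictly fewer disagreements from $O'$; induction finishes. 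Combining the three parts — $V(T)=\O(\tau)$, $T$ acyclic, $T$ connected — gives that $T(\tau)$ is a tree on $\O(\tau)$, as claimed. Throughout, finiteness of $\tau$ is used to guarantee maximal elements of orientations and to make the inductions on numbers of disagreeing separations terminate.
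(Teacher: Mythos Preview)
Your plan is essentially the paper's proof: establish $V(T)=\O(\tau)$ via the uniqueness clause of \cite[Lemma~4.1\,(iii)]{AbstractSepSys}, then prove acyclicity, then connectedness by flipping a maximal disagreement (the paper phrases this as maximizing $|t\cap t'|$ over pairs in distinct components, but it is the same induction). Two places need tightening.

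For acyclicity you have not yet isolated the mechanism. At each node along the putative cycle, the incoming oriented edge and the inverse of the outgoing oriented edge are both \emph{maximal} in that orientation (here is your slip: $O_{i+1}$ is the \emph{source} of the next cycle edge, so the inverse of that edge, not the edge itself, is maximal there). The maximal elements of a consistent orientation of a tree set form a star \cite[Lemma~4.5]{AbstractSepSys}; the star property then gives that the incoming edge is $\le$ the outgoing one, strictly since the underlying edges are distinct. Chaining these inequalities around the cycle yields $\vso<\dots<\vso$, the contradiction you were after. No analysis of how adjacent orientations differ across an edge is required.

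For connectedness, drop the phrase ``together with everything forced below'': once $\vs$ is maximal in $O$, the single swap $O'':=(O\sm\{\vs\})\cup\{\sv\}$ is already consistent (any inconsistency would have to involve $\sv$, and in either role it forces an element of $O$ strictly above $\vs$), and $\sv$ is maximal in $O''$, so $O''=t\!_{\sv}$ is adjacent to $O=t\!_{\vs}$ via the edge~$s$. Since $O$ and $O''$ differ only on $s$, with $\sv\in O'$, the disagreement count drops by exactly one and your induction goes through.
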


\begin{proof}
   For every node $t\in T=T(\tau)$, the set $\sigma_t$%
   \COMMENT{}
   of its incoming edges is precisely the set of all $\vs\in\tau$ that are maximal in the orientation $t$ of~$\tau$.%
   \COMMENT{}
   As $\tau$ is finite, $t$~is the down-closure of%
   \COMMENT{}
   its maximal elements, so these~$\sigma_t$ are splitting stars of~$\tau$.%
   \COMMENT{}
   Conversely, every set $\sigma$ splitting~$\tau$ is clearly of this form: pick $\vs\in\sigma$, and notice that $\sigma = \sigma_t$ for $t:= t\!_{\vs}$ by \cite[Lemma~4.1\,(iii)]{AbstractSepSys}.%
   \COMMENT{}

Similarly, the finiteness of~$\tau$ implies by \cite[Lemma~4.2]{AbstractSepSys} that every $O\in\O$ is the downclosure of its maximal elements, so $O=t\!_{\vs}$ for every $\vs$ maximal in~$O$, giving $V(T)=\O(\tau)$.%
   \COMMENT{}
   Let us now prove that $T$ is a tree.

We noted before that $t\!_{\vs}\ne t\!_{\sv}$ for all~$\vs\in\tau$, so $T$ has no loops. In fact, $T$~is acyclic. Indeed, if $\vso,\dots,\vsk$ are the edges of an oriented cycle in~$\vT(\tau)$,%
   \COMMENT{}
   then each of these and the inverse of its (cyclic) successor lie in a common set~$\sigma_t$. Since these $\sigma_t$ are stars of separations \cite[Lemma~4.5]{AbstractSepSys}, we have $\vso < \ldots < \vsk < \vso$ with a contradiction.%
   \COMMENT{}

To see that $T$ is connected, let $t,t'$ be nodes in different components, with $|t\cap t'|$ maximum. Let $\vs$ be maximal in $t\sm t'$ (which we may assume is non-empty).%
   \COMMENT{}
   Then $\vs$ is maximal also in~$t$: any $\vsdash\in t$ greater than~$\vs$ would also lie in~$t'$, and hence so would~$\vs$ by the consistency of~$t'$ (which also orients~$s$). Replacing $\vs$ in $t$ with~$\sv$ therefore changes $t$ into an orientation~$t''$ of $\tau$ that is again consistent, by the maximality of $\vs$ in~$t$.%
   \COMMENT{}
   In~$t''$ the separation $\sv$ is maximal: for any $\rv > \sv$ we have $\vr < \vs\in t$, so $\vr\in t$ by the consistency of~$t$ and hence also $\vr\in t''$,%
   \COMMENT{}
   giving $\rv\notin t''$. Hence $s=tt''$, and in particular $t''$ lies in the same component of~$T$ as~$t$. Since it agrees with $t'$ on more separations than $t$ does, we have a contradiction to the choice of $t$ and~$t'$.
\end{proof}

Our aim was to show that every finite tree set is the edge tree set of some tree. In order for Lemma~\ref{Ttau} to imply this we still need to know that $\tau$ coincides with $\vec E (T(\tau))$ not only as a set, which it does by definition, but also as a poset: that the $\tau$ is indeed the edge tree set of~$T(\tau)$.
Part~(i) of the Theorem~\ref{Trees} below makes this precise.

Theorem~\ref{Trees}\,(ii) says that the tree $T(\tau)$ whose edge tree set represents~$\tau$, as provided by~(i), is unique up to a canonical graph isomorphism: if $\tau$ is the edge tree set also of some other tree~$T$, then that tree~$T$ is isomorphic to~$T(\tau)$ by an isomorphism that can be defined just in terms of~$\tau$.%
   \COMMENT{}

Given a tree~$T$, write $O_t$ for the orientation of~$\tau(T)$ that orients every edge of~$T$ towards~$t$.

\begin{THM}\label{Trees}
\begin{enumerate}[\rm (i)]
\item For every finite regular tree set~$\tau'$, the identity is a tree set isomorphism between $\tau'$ and~$\tau = \tau(T(\tau'))$.
\item For every finite tree~$T$, the map $t\mapsto O_t$ is a graph isomorphism between $T$ and~$T(\tau(T))$.
\end{enumerate}
\end{THM}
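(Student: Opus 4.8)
The plan is to prove (i) and (ii) separately, reducing each to two facts about the auxiliary nodes $t\!_{\vs}\in\O(\tau)$ of the tree $T(\tau)$ of Lemma~\ref{Ttau}: the defining property that $\vs$ is maximal in the orientation $t\!_{\vs}$, and a \emph{path description} of $T(\tau)$. Namely: for distinct nodes $t,t'$ of $T(\tau)$ and a separation $s\in\tau$, the edge~$s$ lies on the $t$--$t'$ path in $T(\tau)$ if and only if $t$ and~$t'$ orient~$s$ oppositely. I would prove this from the observation, essentially contained in the connectedness argument of Lemma~\ref{Ttau} (flipping a maximal element of a node yields the adjacent node across the corresponding edge), that adjacent nodes of $T(\tau)$ differ only in the orientation of the edge joining them: hence the orientation of~$s$ is constant on each of the two components of $T(\tau)-s$, and the ends $t\!_{\vs},t\!_{\sv}$ of the edge~$s$, which orient~$s$ as $\vs$ and~$\sv$, lie in different components. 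Plugging this into the definition of the natural ordering on $\tau(T(\tau))=\vec E(T(\tau))$ then yields the criterion I shall use: for distinct $r,s\in\tau$, one has $\vr<\vs$ in $\tau(T(\tau))$ if and only if $\vr\in t\!_{\sv}$ and $\sv\in t\!_{\vr}$.

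For~(i): let $\tau'$ be a finite regular tree set and put $\tau:=\tau(T(\tau'))$. Since $\tau=\tau'$ as sets with the same involution (by construction of $T(\tau')$) and neither poset relates the two orientations of a single separation ($\tau'$ being regular, $\tau$ an edge tree set), it suffices to show, for distinct $r,s$, that $\vr<\vs$ in~$\tau'$ if and only if $\vr\in t\!_{\sv}$ and $\sv\in t\!_{\vr}$ (with $t\!_{(\cdot)}$ formed in $\O(\tau')$). If $\vr<\vs$ in~$\tau'$, then $\rv\notin t\!_{\sv}$ (as $\sv<\rv$ would contradict maximality of~$\sv$ in~$t\!_{\sv}$) and $\vs\notin t\!_{\vr}$ (as $\vr<\vs$ would contradict maximality of~$\vr$ in~$t\!_{\vr}$), so both memberships hold. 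Conversely, assume $\vr\in t\!_{\sv}$ and $\sv\in t\!_{\vr}$; since $\tau'$ is nested and $r\ne s$, at least one of $\vr<\vs$, $\vr<\sv$, $\rv<\vs$, $\rv<\sv$ holds. The second contradicts maximality of~$\vr$ in~$t\!_{\vr}$ (which contains $\sv$), the third contradicts maximality of~$\sv$ in~$t\!_{\sv}$ (which contains $\vr$), and the fourth, i.e.\ $\vs<\vr$, makes $t\!_{\vr}$ inconsistent, as it contains both $\sv$ and~$\vr$. Hence $\vr<\vs$ in~$\tau'$, proving~(i).

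For~(ii): fix a finite tree~$T$ and put $\tau:=\tau(T)$. Here $O_t=\dcl(\vec F_t)$ is a consistent orientation of~$\tau$ (noted at the start of the proof of Lemma~\ref{splittingstars}), so $t\mapsto O_t$ maps $V(T)$ into $V(T(\tau))=\O(\tau)$ by Lemma~\ref{Ttau}. It is injective, since for $t\ne t'$ any edge on the $t$--$t'$ path in~$T$ is oriented oppositely by $O_t$ and~$O_{t'}$; and surjective, since every consistent orientation $O$ of the finite tree set~$\tau$ is the down-closure of its set of maximal elements, which is a splitting star, hence $\vec F_t$ for some~$t$ by Lemma~\ref{splittingstars}, so $O=\dcl(\vec F_t)=O_t$. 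Finally, to match edges: for $s\in\tau$ with $s=\{a,b\}$ and $\vs=(a,b)$, the orientation $\vs$ is maximal in~$O_b$ (any $\vr>\vs$ in~$\tau$ would, by the path description, put~$b$ on the tail side of~$r$, whereas $\vr\in O_b$ puts~$b$ on its head side), so $O_b$ is the unique orientation in which $\vs$ is maximal, i.e.\ $t\!_{\vs}=O_b$; likewise $t\!_{\sv}=O_a$. Thus the edge~$s$ of $T(\tau)$ runs between $t\!_{\sv}=O_a$ and $t\!_{\vs}=O_b$, so $t\mapsto O_t$ carries $E(T)$ bijectively onto $E(T(\tau))$ and is therefore a graph isomorphism.

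I expect the main obstacle to be establishing the path description of $T(\tau)$ and extracting the membership criterion for the natural edge ordering; with those in place, both parts become short, using only maximality of~$\vs$ in~$t\!_{\vs}$, consistency of the orientations in~$\O(\tau)$, and Lemma~\ref{splittingstars}. The only further care needed is for small degenerate cases, such as a tree with at most one vertex, which I would handle directly.
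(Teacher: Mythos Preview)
Your proof is correct. For part~(ii) it is essentially the paper's argument: bijectivity of $t\mapsto O_t$ via Lemma~\ref{splittingstars}, and identification of the edge~$s$ of $T(\tau)$ with the edge $ab$ of~$T$ by showing $t\!_{\vs}=O_b$ and $t\!_{\sv}=O_a$; your edge-matching is in fact slightly slicker than the paper's, which argues the non-adjacency direction separately.

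For part~(i) your route differs from the paper's. The paper proceeds by induction on the length of the $r$--$s$ path in~$T(\tau')$, reducing to the base case where $r$ and~$s$ share a node~$t$; there the orientations of $r$ and~$s$ that are maximal in~$t$ form a proper star both in~$\tau'$ (by maximality, consistency and regularity) and in~$\tau$ (as incident edges oriented towards~$t$), and a proper star fixes all four relations between orientations of its members, so the orderings agree. You instead extract from the connectedness argument of Lemma~\ref{Ttau} the fact that adjacent nodes of $T(\tau')$ differ only on the joining edge, whence the criterion $\vr<\vs$ in $\tau(T(\tau'))\iff \vr\in t\!_{\sv}$ and $\sv\in t\!_{\vr}$, and then verify this criterion against $\vr<\vs$ in~$\tau'$ by a four-case analysis using maximality and consistency. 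Your approach is more global and avoids the induction, at the cost of first establishing the path description; the paper's approach is more local and structural, leaning on the proper-star rigidity. Both are short and natural; neither clearly dominates the other.
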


Note that if (ii) is applied to a tree of the form $T=T(\tau')$, then $t\mapsto O_t$ is the identity on $V(T) = \O(\tau') = \O(\tau(T(\tau')))$, by~(i) applied to~$\tau'$.%
   \COMMENT{}

\begin{proof}
(i) The fact that elements $\vs,\sv$ of~$\tau'$ are inverse to each other also in $\tau$ was built into the definition of $(\tau =)\ \vec E(T(\tau')) = E(\vT(\tau'))$.

It remains to show that elements $\vr,\vs\in\tau'$ satisfy $\vr < \vs$ in~$\tau'$ if and only if they do so in~$\tau$. Since $\tau'$ and~$\tau$ are both regular,%
   \COMMENT{}
   $\vr < \vs$~in either of them implies that $r\ne s$, so we may assume this. The equivalence of the two assertions%
   \COMMENT{}
   follows by induction on the length~$\ell$ of the unique $r$--$s$ path in~$T(\tau')$, using the transitivity of~$\le$,%
   \COMMENT{}
   once we have shown it for $\ell=0$, that is, when $r$ and~$s$ share a vertex~$t$ of~$T(\tau')$.

Since $r\ne s$ by assumption, this means that the consistent orientation $t$ of~$\tau'$ has distinct maximal elements that are orientations of $r$ and~$s$, respectively. These form a proper star in~$\tau'$, because they are both maximal in~$t$ and $t$ is consistent and antisymmetric (and $\tau'$ is regular). And they form a proper star in~$\tau$, because they are both oriented towards and incident with~$t$.%
   \COMMENT{}
   Since, in any separation system, distinct elements of a proper star and their inverses are each related as required by the star property and not in any other way, the orientations of $r$ and~$s$ are related in~$\tau'$ as they are in~$\tau$.

(ii) For each $t\in T$, the orientation $O_t$ of~$\tau=\tau(T)$ is clearly consistent, so it is an element of~$\O(\tau) = V(T(\tau))$. As $O_t$ and~$O_{t'}$ differ on every edge of~$T$ between $t$ and~$t'$, the map is injective. To see that it is surjective, recall from \cite[Lemma~4.2]{AbstractSepSys} that the elements of~$\O(\tau)$ are precisely the down-closures of the subsets splitting~$\tau$, which by Lemma~\ref{splittingstars} are the sets~$\vec F_t$. But the down-closure of~$\vec F_t$ is precisely~$O_t$. Thus, our map $t\mapsto O_t$ is a bijection from~$V(T)$ to~$\O(\tau)$.

To see that $t\mapsto O_t$ is a graph isomorphism, notice that for any edge ${tt'\in T}$ its orientation~$\vs$ from $t$ to~$t'$ is maximal in~$O_{t'}$, while its other orientation~$\sv$ is maximal in~$O_t$. Hence $s$ is an edge of $T(\tau)$ between its vertices $O_t$ and~$O_{t'}$.

For the converse note first that, given adjacent vertices $O_t$ and~$O_{t'}$ of~$T(\tau)$, the edge~$s$ joining them in~$T(\tau)$ is the only edge of~$T$ which $O_t$ and~$O_{t'}$ orient differently. Indeed, by definition of~$T(\tau)$, the edge~$s$ has an orientation $\vs$ that is maximal in~$O_t$, and whose inverse~$\sv$ is maximal in~$O_{t'}$. By definition of $O_t$ and~$O_{t'}$, this means that $s$ is the edge $tt'$ of~$T$.%
   \COMMENT{}

Now if two vertices $t,t'\in T$ are {\em not\/} adjacent in~$T$,%
   \COMMENT{}
   then the $t$--$t'$ path in~$T$ contains distinct edges $r$ and~$s$. As $O_t$ and~$O_{t'}$ disagree on both these edges, they cannot be adjacent in~$T(\tau)$.%
   \COMMENT{}
   \end{proof}

Infinite tree sets need not be isomorphic to the edge tree set of a tree. Indeed, as discussed in the introduction, one of%
   \COMMENT{}
   our motivations for studying tree sets is that they can capture tree-like structures in infinite combinatorics that actual trees cannot represent.

For infinite tree sets~$\tau$ that can be represented by a tree~$T$, we can still use its splitting stars as the nodes of~$T$ (Lemma~\ref{splittingstars}) and define its edges as earlier. But note that $\tau$ may have consistent orientations that are not the down-closure of a splitting star, and so the nodes of~$T$ may correspond to only proper a subset of~$\O(\tau)$. Indeed, orienting the edges of~$T$ towards an end of~$T$ is consistent but such orientations have no maximal elements.

Gollin and Kneip~\cite{TreelikeSpaces} have shown that Theorem~\ref{Trees} extends to precisely those infinite tree sets that contain no chain of order type~$\omega+1$, and that all tree sets can be represented as edge tree set of `tree-like' topological spaces.

\section{Tree sets from order trees}\label{sec:Otrees}

An {\em order tree\/}, for the purpose of this paper, is a poset $(T,\le)$ in which the down set
 $$\interior{\dcl(t)} := \{\,s\in T\mid s < t\,\}$$
 below every element $t\in T$ is a chain. We do not require this chain to be well-\penalty-200 ordered. To ensure that order trees induce order trees on the subsets of their ground set, we also do not require that every two elements have a common lower bound. Order trees that do have this property will be called {\em connected\/}.%
   \COMMENT{}

Order trees are often used to describe the tree-likeness of other combinatorial structures. In such contexts it can be unfortunate that they come with more information than just this tree-likeness, and one has to find ways of `forgetting' the irrelevant additional information.

Theorem~\ref{OrderTrees} below offers a way to do this: it canonically splits the information inherent in an order tree into the `tree part' represented by an unoriented tree set, and an `orienting part' represented by an orientation of this tree set. These orientations will be consistent. Indeed, we shall see that order trees correspond precisely to consistently oriented tree sets, finite or infinite.

As in~Section~\ref{sec:Gtrees}, let us first define for a given order tree $T$ a tree set $\tau=\tau(T)$ and a consistent orientation~$O_T$ of~$\tau$, and then conversely for a given tree set~$\tau$ and any consistent orientation~$O$ of~$\tau$ an order tree~$T=T(\tau,O)$. Applying these two operations in turn, starting from either an arbitrary order tree or from an arbitrary consistently oriented tree set, will yield an automorphism of order trees or of tree sets~-- in fact, the identity or something as close to the identity as is formally possible.%
   \COMMENT{}

For the first part, let $T=(X,\le)$ be any order tree. Our aim is to extend~$T$ to a tree set, i.e., to find a tree set $(\vS,\le\,,{}^*)$ such that $(X,\le)$ is a subposet of~$(\vS,\le)$. So we have to add%
   \COMMENT{}
   some inverses. Let $X^* = \{\,x^*\mid x\in X\,\}$ be a set disjoint from~$X$ and such that $x^*\ne y^*$ whenever $x\ne y$. Extend $\le$ to~$X\cup X^*$ by letting

\medskip\indent\indent
   $x^* < y^*\quad $if and only if$\quad x > y$ in~$T$;

\smallskip\indent\indent
   $x^* < y~\hskip1.5pt\quad$if and only if$\quad x,y$ are incomparable in~$T$.

\medskip\noindent
For every $x\in X$ let $x^{**}=x$; this defines an involution ${}^*\colon x\mapsto x^*$ on~$X\cup X^*$. Let $\tau(T) := (X\cup X^*, \le\,, {}^*)$ and $O_T := X^*$.

\begin{LEM}\label{TauTOrder}
Whenever $T=(X,\le)$ is an order tree,%
   \COMMENT{}
   $\tau(T)$ is a regular tree set, and $O_T$ is a consistent orientation of~$\tau(T)$.
\end{LEM}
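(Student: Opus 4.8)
The plan is to verify, in turn, the four assertions packed into the statement: that $\tau(T)=(X\cup X^*,\le\,,{}^*)$ is a separation system, that it is nested, that it is regular (hence essential, so a regular tree set), and that $O_T=X^*$ is a consistent orientation of it. That ${}^*$ is an involution costs nothing: $X$ and $X^*$ are disjoint, $x\mapsto x^*$ is a bijection between them, and $x^{**}=x$ by fiat; in particular $a\ne a^*$ for every $a\in X\cup X^*$, so $\tau(T)$ has no degenerate elements.

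\emph{Separation system.} I would first write out, from the displayed definition together with the fact that ${}^*$ is to be order-reversing, the complete list of strict relations of $\tau(T)$: for $x,y\in X$ one has $x<y$ exactly when $x<y$ in~$T$; one has $x^*<y^*$ exactly when $x>y$ in~$T$; one has $x^*<y$ exactly when $x$ and $y$ are incomparable in~$T$; and \emph{no} element of~$X$ lies below any element of~$X^*$ (a relation $x<y^*$ would, under order-reversal, amount to $y<x^*$, which the definition does not grant). From this list, reflexivity and the order-reversal property of~${}^*$ are immediate, and antisymmetry is routine: within $X$ it is inherited from~$T$, within $X^*$ from the reverse order of~$T$, and there are no two-way relations between an element of $X$ and one of~$X^*$. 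The real content is transitivity. Since nothing in $X$ lies below anything in $X^*$, a chain $a\le b\le c$ either stays inside $X$ (transitivity from~$T$), stays inside $X^*$ (from the reverse of~$T$), or has the shape $x^*\le\cdots\le z$ with $z\in X$; in that last case one has to show that two elements of $X$ linked together either by both lying weakly below a common element or through an incomparability are again incomparable, and this is exactly where the order-tree axiom, that the strict down-set $\interior{\dcl(t)}=\{s\in T:s<t\}$ below every $t$ is a chain, is used. Granting this, $\tau(T)$ is a poset with an order-reversing involution.

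\emph{Nested, regular, essential.} Given separations $s=\{x,x^*\}$ and $r=\{y,y^*\}$ of $\tau(T)$, I would split on whether $x$ and $y$ are comparable in~$T$: if they are, the orientations $x$ and $y$ are comparable in $\tau(T)$; if they are not, then $x^*<y$, so again $r$ and $s$ have comparable orientations. Hence $\tau(T)$ is nested. For regularity, I would check that $\tau(T)$ has no small element $a\le a^*$: for $a=x\in X$ this is impossible because no element of~$X$ lies below an element of~$X^*$, and for $a=x^*\in X^*$ it would force $x$ to be incomparable with itself. Being regular, $\tau(T)$ is essential (as noted in the excerpt), so $\tau(T)$ is a regular tree set.

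\emph{$O_T=X^*$ is a consistent orientation.} It is an orientation because for each separation $\{x,x^*\}$ exactly one of its orientations, namely $x^*$, lies in $X^*$. For consistency, a violation would be a pair of distinct separations $r,s$ with orientations $\vr<\vs$ and $\rv,\vs\in X^*$; but $\vs\in X^*$ forces $\vs=y^*$ and $\rv\in X^*$ forces $\vr\in X$, so $\vr<\vs$ would be a relation from~$X$ into~$X^*$, of which there are none. I expect the transitivity step in the separation-system verification — the one place that genuinely invokes the order-tree axiom — to be the only part requiring care; everything else is bookkeeping.
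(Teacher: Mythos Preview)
Your proposal is correct and follows essentially the same approach as the paper: both identify transitivity of the extended order as the only nontrivial check (invoking the order-tree axiom precisely there), and dispatch the consistency of $O_T=X^*$ via the observation that no element of~$X$ lies below any element of~$X^*$. Your sketch is simply more explicit about the routine parts (nestedness, regularity, antisymmetry) that the paper waves through in a single sentence.
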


\begin{proof}
For a proof that $\tau(T)$ is a regular tree set, the only nontrivial claim to check is that $\le$ is transitive on~$X\cup X^*$.%
   \COMMENT{}

Consider any $x,y,z\in X$. Suppose first that $x^* < y < z$. The first inequality implies, by our definition of~$<$, that $x$~and $y$ are incomparable in~$X$. But then so are $x$ and~$z$ (giving $x^*<z$ as desired): if $x < z$ then $x,y < z$, which makes $x$ and~$y$ comparable (which they are not) since $X$ is an order tree, while if $z < x$ then $y < z < x$ in~$X$, again contradicting the incomparability of $x$ and~$y$. Similarly if $x^* < y^* < z$ then $z^* < y < x$, which as just seen implies $z^* < x$ and hence $x^* < z$.%
   \COMMENT{}
   This covers all cases that do not follow at once from the transitivity of~$\le$ on~$X$.%
   \COMMENT{}

Finally, $O_T = X^*$ is a consistent orientation of~$\tau(T)$, since $X\cap X^*=\es$%
   \COMMENT{}
   and we never have $x < y^*$ for any $x^*,y^*\in X^*$.
 \end{proof}

We remark that, if $T=(X,\le)$ is connected,%
  \COMMENT{}
  then $\tau(T)$ is in fact the unique smallest regular%
   \COMMENT{}
   tree set to which $T$ extends. Indeed, any tree set $\tau$ that induces~$T$ on a subset $X$ must also contain a set $X^*$ of inverses. Let us show that $X^*$ will be disjoint from~$X$ if $\tau$ is regular.%
   \COMMENT{}
   Suppose that $y,z\in X$ are such that $y^*=z$ (and hence $z^*=y$). If $y$ and~$z$ are comparable, with $y<z$ say, then this makes $y$ small, contradicting the regularity of~$\tau$. But if they are incomparable, there will be an $x\in X$ below both (since $X$ is connected), so our assumption of $y^*=z$ makes $x$ trivial (and hence small), a contradiction.%
  \COMMENT{}

The proof of our remark%
   \COMMENT{}
   will be completed by the following uniqueness lemma, which has the disjointness of $X$ and~$X^*$ built into its premise%
   \COMMENT{}
   and therefore holds also for disconnected order trees. Note that if $(X,\le)$ is connected and $\tau$ is as specified in the lemma, then $X^*$~is necessarily consistent in~$\tau$: otherwise there are $y,z\in X$ are such that $y < z^*$, and by the connectedness of~$X$ there exists $x\in X$ with $x\le y$ and~$x\le z$, so $x\le y < z^*\le x^*$ is small, contradicting the regularity of~$\tau$.

\begin{LEM}\label{UniqueExtension}
Let $\tau = (\vS,\le\,,{}^*)$ be a regular tree set. Let $X\sub\vS$ be antisymmetric%
   \COMMENT{}
   and such that $(X,\le)$ is an order tree. If $X^* = \{\,x^*\mid x\in X\,\}$ is consistent in~$\tau$, then $\tau$ induces $\tau(T)$ on~$X\cup X^*$.
\end{LEM}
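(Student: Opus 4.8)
The plan is to show that the separation system $\tau$ induces on $X\cup X^*$ exactly the order structure prescribed by the definition of $\tau(T)$; since both are regular tree sets with the same underlying set and the same involution (the involution of $\tau(T)$ on $X$ is $x\mapsto x^*$, which is exactly the involution of $\tau$ restricted to $X\cup X^*$, using that $X^*$ is the pointwise image of $X$ under ${}^*$), it suffices to check that the three defining relations of $\tau(T)$ coincide with the relation $\le$ inherited from $\tau$. So I would fix $x,y\in X$ and verify, in each of the three cases, the equivalence ``$x^*<y^*$ in $\tau$ $\Leftrightarrow$ $y<x$ in $T$'', ``$x^*<y$ in $\tau$ $\Leftrightarrow$ $x,y$ incomparable in $T$'', and ``$x<y$ in $\tau$ $\Leftrightarrow$ $x<y$ in $T$'' (this last one being immediate, since $(X,\le)$ is a subposet of $(\vS,\le)$ by hypothesis). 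The first equivalence follows from the second applied to $x^*$ in place of $x$, together with \eqref{invcomp}: $x^*<y^*$ iff $y<x$ in $\tau$, and $y<x$ in $\tau$ iff $y<x$ in $X$ since $X$ is a subposet. So the real content is the middle equivalence.

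For the forward direction of the middle equivalence, suppose $x^*<y$ in $\tau$ but $x,y$ are comparable in $T$. If $x\le y$ in $X$, then $x^*<y$ together with $x\le y$ gives $x^*\le y$ and $x\le y$; but also $x\le x^{**}$... more directly: $x\le y$ in $\tau$ and $x^*\le y$ in $\tau$ would, via \eqref{invcomp} applied to $x\le y$, give $y^*\le x$... I would instead argue: if $y\le x$ in $X$ then $x^*<y\le x$ makes $x$ small, contradicting regularity of $\tau$; if $x\le y$ in $X$ then $x\le y$ and $x^*\le y$, so $x^*$ is small (since $x^*\le y$ and, applying \eqref{invcomp} to $x\le y$, also $y^*\le x^*$, hence $x^*\le y$ and... ) — the cleanest version is: $x\le y$ and $x^*<y$ give $\{x,x^*\}$ a separation both of whose orientations lie below $y$'s orientation... but $y$ may not be a witnessing separation in the ``$s\in S$'' sense. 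Better: from $x\le y$ in $\tau$ and \eqref{invcomp} we get $y^*\le x^*$; combined with $x^*<y$ this gives $y^*\le x^*<y$, so $y^*$ is small, contradicting regularity. Symmetrically $y\le x$ is excluded. Hence $x,y$ are incomparable in $T$.

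For the backward direction, suppose $x,y$ are incomparable in $T$; I must show $x^*<y$ in $\tau$. Since $\tau$ is nested, $x^*$ and $y$ have comparable orientations, so one of $x^*\le y$, $x^*\ge y$, $x\le y$, $x\ge y$ holds. The last two are excluded because $x,y$ incomparable in $T$ means $x,y$ incomparable in $X$, hence (as $X$ is a subposet) in $\tau$. If $x^*\ge y$, i.e.\ $y\le x^*$, then by \eqref{invcomp} $x\le y^*$, so $x,y^*$ are comparable; but $x\le y^*$ together with $y\le x^*$ — I would derive a contradiction with consistency of $X^*$: the hypothesis says $X^*$ is consistent, and $y\le x^*$ with $y\ne x$ would mean $y^*\ge x^*$... hmm, consistency forbids $\vr<\vs$ with $\rv,\vs$ both in the set, i.e.\ it forbids two elements of $X^*$ pointing away from each other. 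From $x\le y^*$ we get, by \eqref{invcomp}, $y\le x^*$ (consistent with what we have). So $x<y^*$ strictly (equality would make $x$ degenerate, impossible in a tree set): this is $x<y^*$ with $x,y^*$... but $x\notin X^*$. Instead note $y\le x^*$ strictly, so $y<x^*$, and applying ${}^*$: $x<y^*$; now $x<y^*$ says $\vr<\vs$ with $\vr=x$, $\vs=y^*$; then $\rv=x^*$ and $\vs=y^*$ both lie in $X^*$ with $\vr<\vs$ — wait, we need $\rv,\vs$ in the set, i.e.\ $x^*,y^*\in X^*$: yes. And $\vr<\vs$ means $x<y^*$: yes. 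So $X^*$ is inconsistent, contradiction. Therefore $x^*\le y$, and strictness ($x^*<y$) follows since $x^*=y$ is impossible as $X\cap X^*=\es$ (which holds because $\tau$ is regular — argued in the remark preceding the lemma, but for the lemma as stated I would instead note $x^*=y$ would force, with incomparability, a failure; cleanly: if $x^*=y$ then $x^*=y\in X$ and $x\in X$ with $x^{**}=x$, and $x\le x^*$ or $x^*\le x$ by nestedness makes one of them small, contradicting regularity). The main obstacle is bookkeeping: keeping straight which of the four possible orientation-relations is being excluded by which hypothesis (incomparability in $T$, regularity of $\tau$, consistency of $X^*$), and invoking \eqref{invcomp} correctly each time; there is no deep difficulty, only the risk of a sign slip.
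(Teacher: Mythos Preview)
Your argument is correct and follows essentially the same route as the paper: regularity rules out comparability of $x$ with $x^*$ (and more generally any small separation), nestedness forces some relation between orientations of distinct $x,y$, and consistency of $X^*$ eliminates the possibility $x<y^*$, leaving exactly the relation prescribed by~$\tau(T)$. Two minor clean-ups: in your forward direction, $x^*<y\le x$ makes $x^*$ (not~$x$) small; and your plan of ``checking the three defining clauses'' should also record that $\tau$ has no extra relation of type $x<y^*$---but this is immediate from consistency of~$X^*$ (for $x\ne y$) and regularity (for $x=y$), tools you already invoke.
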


\begin{proof}
The involutions in $\tau$ and in $\tau(T)$ agree by definition of~$X^*$.%
   \footnote{Recall that, formally, we did not specify $X^*$ precisely in the definition of~$\tau(T)$: we took `any' set $X^*$ disjoint from~$X$ and with a bijection $x\mapsto x^*$ from $X$ to~$X^*$. The intended reading of Lemma~\ref{UniqueExtension} is that this is now the set $\vS\sm X$, whose elements are specified as $x^*$ by the involution in~$\tau$. The fact that this agrees with the involution in~$\tau(T)$ is then  tautological.}

It remains to show that $\tau$ and in $\tau(T)$ define the same ordering on~$X\cup X^*$. Since $\tau$ and~$\tau(T)$ are regular, $x$~and $x^*$ are incomparable in both, for all $x\in X$. Now consider distinct $x,y\in X$. If $x$ and~$y$ are comparable, with $x>y$ say, we must have $x^* < y^*$ in~$\tau$ by~\eqref{invcomp}, in agreement with our definition of~$\tau(T)$. Assume now that $x$ and~$y$ are incomparable. The consistency of $X^*$ in~$\tau$ rules out that $x < y^*$. But since $\tau$ is nested, $x$~and $y$ must have comparable orientations. The only case left is that $x^*<y$,%
   \COMMENT{}
   as we defined it for~$\tau(T)$.
\end{proof}

Lemma~\ref{TauTOrder} showed us how to extend, canonically, a given order tree~$T$ to a regular tree set $\tau(T)$ in which its complement~$O_T$ is consistent. We now show that, conversely, deleting a consistent orientation from a regular tree set leaves an order tree.

Given a regular tree set~$\tau$ and a consistent orientation $O$ of~$\tau$, let $T(\tau,O)$ be the subposet $(X,\le)$ of~$\tau$ induced by~$X:=\tau\sm O$.

\begin{LEM}\label{TtauOrder}
Whenever $\tau=(\tau,\le\,,{}^*)$ is a regular tree set and $O$ is a consistent orientation of~$\tau$, the poset $T(\tau,O)$ is an order tree.
\end{LEM}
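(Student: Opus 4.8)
\medskip

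The plan is to verify the defining property of an order tree directly: that in the induced subposet $(X,\le)$, where $X:=\tau\sm O$, the down-set $\{\vs\in X : \vs<\vt\}$ of every element $\vt\in X$ is a chain. So I would fix $\vt\in X$, take two elements $\vr,\vs\in X$ with $\vr<\vt$ and $\vs<\vt$, and argue that $\vr$ and $\vs$ are comparable. First a little bookkeeping: since $\tau$ is regular it has no degenerate separations, so the orientation $O$, and hence its complement $X$, contains exactly one orientation of each separation; in particular $\vp\in X$ is equivalent to $\pv\in O$, and distinct elements of $X$ orient distinct separations. From $\vr<\vt$ and $\vs<\vt$ it then follows that $s\ne t$ and $r\ne t$, and --- in the only case that needs an argument, $\vr\ne\vs$ --- also $r\ne s$.

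For the main step I would use that $\tau$ is nested: then $\vr$ and $\vs$ are nested, and if they are comparable we are done, so assume not. Using~\eqref{invcomp} to collapse the relations $\rv\le\vs$ with $\sv\le\vr$, and $\vr\le\sv$ with $\vs\le\rv$, nestedness together with incomparability leaves exactly the two possibilities $\vr\le\sv$ or $\sv\le\vr$, and since $r\ne s$ both are strict. If $\vr<\sv$, then $\vr<\sv$ with $\rv\in O$ and $\sv\in O$ contradicts the consistency of~$O$. If $\sv<\vr$, then $\sv<\vr<\vt$ gives $\sv<\vt$, so from $\vs<\vt$ and $\sv<\vt$ and~\eqref{invcomp} we obtain $\tv<\vs$ and $\tv<\sv$; thus $s$ witnesses the triviality of $\tv$, making $\tv$ trivial and hence small, contradicting the regularity of~$\tau$. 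Both cases being impossible, $\vr$ and $\vs$ are comparable; hence every down-set in $X$ is a chain and $T(\tau,O)$ is an order tree.

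I do not expect a genuine obstacle here: the lemma is essentially a repackaging of the definitions of \emph{nested}, \emph{consistent}, \emph{trivial}, and \emph{regular}. The one point that calls for care is the case analysis after assuming $\vr,\vs$ incomparable --- checking that nestedness really reduces to the two strict cases $\vr<\sv$ and $\sv<\vr$, and recognizing that one of them is ruled out by the consistency of~$O$ and the other by the regularity of~$\tau$.
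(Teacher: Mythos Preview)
Your proof is correct and follows essentially the same approach as the paper's. The only cosmetic difference is that the paper names the elements so that $\vr,\vs,\vt\in O$ (hence $\rv,\sv,\tv\in X$), dispatches the case $\sv<\vr$ directly by consistency, and in the remaining case chains $\vt<\vr<\sv<\tv$ to get a small element outright; your Case~2 reaches the same contradiction via triviality of~$\tv$, which is a slightly longer detour but equally valid.
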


\begin{proof}
Note first that for $X:=\tau\sm O$ we have $X = \{\,\sv\mid \vs\in O\,\}$, since $\tau$ is regular. For our proof that $(X,\le)$ is an order tree, consider $\vr,\vs,\vt\in O$ with $\rv,\sv < \tv$, and let us show that $\rv,\sv$ are comparable in~$\tau$. If not, then $\vr$ is comparable with~$\sv$ (and $\rv$ with~$\vs$),%
   \COMMENT{}
   because $r$ and~$s$ have comparable orientations since $\tau$ is a tree set.%
   \COMMENT{}
   Since $O$ is consistent we cannot have $\vr > \sv$, so $\vr < \sv$. But this implies that $\vt < \vr < \sv < \tv$ with a contradiction, since $\tau$ has no small elements.
\end{proof}

We have seen that every order tree canonically gives rise to a consistently oriented tree set, and that every consistently oriented tree set canonically induces an order tree. Let us now show that these maps are, essentially, inverse to each other.

When we convert a given order tree $T=(X,\le)$ into an oriented tree set, $\tau=\tau(T)$, we start by adding a set $X^*$ of inverses disjoint from~$X$, and this set~$X^*$ will be the desired orientation~$O=O_T$ of~$\tau$. Converting $\tau$ and~$O$ back into an order tree $T(\tau,O)$ then just consists of deleting~$O$ from the poset~$\tau$. This takes us back not only to the original ground set $X$ of~$T$, but the ordering of $T$ on~$X$ is preserved in the back-and-forth process. Theorem~\ref{OrderTrees}\,(ii) below expresses this.

Going the other way is entails a small technical complication. When we convert a tree set~$\tau$, given with a consistent orientation~$O$, into an order tree $T(\tau)$ by deleting~$O$ from the poset~$\tau$, we cannot hope to get $\tau$ back if we then expand $T=T(\tau)$ {\em canonially\/} to a tree set~$\tau(T)$, because the canonical process of extending~$T$ has no knowledge of the actual set~$O$ we deleted. All we can hope for is that the set we add corresponds to~$O$ as naturally as possible, and this is what Theorem~\ref{OrderTrees}\,(i) will say. Let us express this formally.

Given a regular tree set $\tau' = (\tau',\le\,,{}^*)$ and a consistent orientation~$O$%
   \COMMENT{}
   of~$\tau'$ let ${\rm id}'_O\colon \tau'\to\tau$, where $\tau = \tau(T(\tau',O))$, be the identity id on~$\tau'\sm O$ and commute on~$O$ with the composition ${}^*\circ{\rm id}\circ{}^*$ whose two maps~${}^*$ are the involutions on $\tau$ and on~$\tau'$, respectively.%
   \COMMENT{}
   Call ${\rm id}'_O$ the {\em canonization\/} of $\tau'$ {\em given}~$O$.

\begin{THM}\label{OrderTrees}
\begin{enumerate}[\rm (i)]
\item Given a regular tree set~$\tau'$ and a consistent orientation $O$ of~$\tau'$, let $T:= T(\tau',O)$ and $\tau:= \tau(T)$. Then the canonization $\tau'\to\tau$ of~$\tau'$ given~$O$ is an isomorphism of tree sets that induces the identity on~$\tau'\sm O$ and maps $O$ to~$O_T$.
\item Given an order tree~$T' = (X,\le)$, the identity on~$X$ is an order isomorphism from $T'$ to~$T := T(\tau(T'),O_{T'})$.
\end{enumerate}
\end{THM}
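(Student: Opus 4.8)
The plan is to prove the two parts of Theorem~\ref{OrderTrees} separately, drawing on the three preparatory lemmas (\ref{TauTOrder}, \ref{UniqueExtension}, \ref{TtauOrder}) that have just been established. Part~(ii) is the easier direction and I would do it first: starting from an order tree $T'=(X,\le)$, the construction of $\tau(T')$ adds a disjoint set $X^*$ of inverses and sets $O_{T'}=X^*$; then $T(\tau(T'),O_{T'})$ is by definition the subposet of $\tau(T')$ induced by $\tau(T')\sm O_{T'}=X$. So as a \emph{set} we are back exactly at $X$, with the identity map. The only thing to check is that the partial order on $X$ is unchanged — but the order on $X$ inside $\tau(T')$ was declared to be the order of $T'$ on $X$ (we only \emph{added} new relations involving the starred elements and cross-relations $x^*<y$), so there is literally nothing to prove beyond unwinding the definitions. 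I would phrase this as: the order of $T(\tau(T'),O_{T'})$ is the restriction to $X$ of the order of $\tau(T')$, which is the order of $T'$.

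For part~(i), the strategy is to invoke the uniqueness lemma, Lemma~\ref{UniqueExtension}. Given the regular tree set $\tau'$ and consistent orientation $O$, set $X:=\tau'\sm O$ and $T=T(\tau',O)=(X,\le)$; by Lemma~\ref{TtauOrder}, $T$ is an order tree. Now I want to apply Lemma~\ref{UniqueExtension} with this $X$ sitting inside $\tau'$: I must check that $X\sub\tau'$ is antisymmetric and that $X^*:=\{x^*\mid x\in X\}$ is consistent in $\tau'$. Antisymmetry of $X$: since $\tau'$ is regular, $\vs\le\sv$ never holds, so an orientation $O$ and its complement $X$ are each antisymmetric (they are partial orientations in the sense defined earlier). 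Moreover, regularity gives $X^*=O$ exactly — precisely the identity noted at the start of the proof of Lemma~\ref{TtauOrder}. Hence the hypothesis ``$X^*$ is consistent in $\tau'$'' is just the hypothesis that $O$ is consistent, which we are given. Lemma~\ref{UniqueExtension} then tells us that $\tau'$ induces $\tau(T)$ on $X\cup X^*=\tau'$ — i.e., $\tau'$ \emph{is} $\tau(T)$, up to the renaming of the elements of $O=X^*$ via the two involutions. That renaming is exactly the map ${\rm id}'_O$, the canonization: it is the identity on $X=\tau'\sm O$ and on $O$ it is ${}^*\circ{\rm id}\circ{}^*$, translating a name $\vs$ of an element of $O\sub\tau'$ into the corresponding name $x^*$ in $\tau=\tau(T)$ where $x=\sv$. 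So ${\rm id}'_O$ is a tree-set isomorphism $\tau'\to\tau$ fixing $\tau'\sm O$ pointwise; and it carries $O$ onto $X^*=O_T$ by construction. That is precisely the assertion.

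The main obstacle — really the only point needing care — is the bookkeeping around names in part~(i): the elements of $\tau(T)$ are formally the elements of $X$ together with a \emph{fresh} disjoint copy $X^*$, whereas the elements of $O\sub\tau'$ are honest elements of $\tau'$. One has to be precise that Lemma~\ref{UniqueExtension} is being read, as its footnote instructs, with $X^*$ taken to be $\vS\sm X=O$ itself and the bijection $x\mapsto x^*$ given by the involution of $\tau'$; with that reading the ``isomorphism'' provided by Lemma~\ref{UniqueExtension} is literally the canonization ${\rm id}'_O$, and the claim that it maps $O$ to $O_T=X^*$ is a tautology once the identifications are made explicit. I would spell this identification out in one or two sentences and otherwise let the cited lemmas do the work; no computation is required.
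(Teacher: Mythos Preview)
Your proposal is correct and follows essentially the same route as the paper's proof: for~(i) you invoke Lemma~\ref{TtauOrder} to see that $T$ is an order tree and then Lemma~\ref{UniqueExtension} (with $X^*=O$) to conclude, and for~(ii) you unwind the definitions to recover the original order on~$X$. The paper is considerably terser but uses exactly these lemmas in exactly this way; your extra care with the name-bookkeeping around the canonization and the footnote to Lemma~\ref{UniqueExtension} is appropriate and makes explicit what the paper leaves implicit.
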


\begin{proof}
(i) As $O$ is an orientation of~$\tau'$ which, being a tree set, has no degenerate elements, $\tau'$~is the disjoint union of~$O$ and~$T = \{\,\sv\mid\vs\in O\,\}$. The latter, with its ordering induced by~$\tau'$, is an order tree by Lemma~\ref{TtauOrder}. The assertion now follows from Lemma~\ref{UniqueExtension}.%
   \COMMENT{}

(ii) By Lemma~\ref{TauTOrder}, $\tau(T')$~is a regular tree set of which $O_{T'}$ is a consistent orientation. By definition, $\tau(T')$~induces the original ordering of~$T'$ on its%
   \COMMENT{}
   ground set~$X$. But~$T$, by its definition as~$T(\tau(T'),O_{T'})$, also induces this ordering%
   \COMMENT{}
   on~$X$. Hence $T$ and~$T'$ induce the same ordering on their common ground set~$X$.
\end{proof}

\section{Tree sets from nested subsets of a set}\label{sec:Bipartitions}

Let $X$ be a non-empty set. The power set $2^X\!$ of $X$ is a separation system%
   \COMMENT{}
   with respect to inclusion and taking complements in~$X$. It contains the empty set~$\es$ as a small element,%
   \COMMENT{}
   but every nested subset of $2^X\sm\{\es\}$ is a regular tree set.

For compatibility with our earlier notion of set separations, let us refer to subsets $A$ of~$X$ as special kinds of separations of~$X$: those of the form $(A,X\sm A)$. A {\em bipartition\/} of~$X$, then, is an ordered pair $(A,B)$ of disjoint non-empty subsets of~$X$ whose union is~$X$. The bipartitions of~$X$ form a separation system $\vS(X)$ with respect to their {\em natural ordering\/} $(A,B)\le (C,D)$  defined by $A\sub C$ and the involution $(A,B)\mapsto (B,A)$. This separation system has no small elements, so every nested symmetric subset is a regular tree set.

Conversely, every regular tree set can be represented by set bipartitions:

\begin{THM}\label{SimpleBipartitions}
Every regular tree is isomorphic to a tree set of bipartitions of~a~\rlap{set.}
\end{THM}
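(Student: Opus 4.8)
The plan is to realize a given regular tree set $\tau = (\vS,\le\,,{}^*)$ as a nested system of bipartitions of a suitable ground set, the natural candidate for which is the set $\O = \O(\tau)$ of consistent orientations of~$\tau$. This is the set that already played the role of a vertex set in Section~\ref{sec:Gtrees}, and the intuition is exactly the one behind tree‑decompositions: each oriented separation $\vs$ should correspond to the bipartition of~$\O$ into those orientations that contain~$\vs$ and those that contain~$\sv$. So I would define, for each $\vs\in\vS$,
\[
\beta(\vs) := \{\,O\in\O \mid \vs\in O\,\},
\]
and aim to show that $\vs\mapsto\beta(\vs)$ is an isomorphism from~$\tau$ onto its image, a nested set of bipartitions of~$\O$.

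First I would check that $\beta$ lands among bipartitions: since every $O\in\O$ orients~$s$ in exactly one way, $\{\beta(\vs),\beta(\sv)\}$ is a partition of~$\O$, and $\beta(\sv) = \O\sm\beta(\vs)$, so $\beta$ intertwines the involution~${}^*$ with complementation. Next, monotonicity: if $\vr\le\vs$ then any consistent orientation containing~$\vr$ must contain~$\vs$ (it cannot contain~$\sv$, as $\vr\le\vs$ with $\sv,\vr$ both present would violate consistency — here one uses $r\ne s$, which holds by regularity), so $\beta(\vr)\sub\beta(\vs)$; this gives that $\beta$ is order‑preserving, hence its image is nested because $\tau$ is. Injectivity of~$\beta$, together with the reverse implication $\beta(\vr)\sub\beta(\vs)\Rightarrow\vr\le\vs$, is where the real content lies: I need that whenever $\vr\not\le\vs$ there is a consistent orientation containing~$\vr$ but not~$\vs$, i.e. containing~$\vr$ and~$\sv$. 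For this I would invoke the basic extension facts about consistent orientations from~\cite{AbstractSepSys}: a consistent partial orientation extends to a consistent orientation of all of~$\vS$. The partial orientation to start from is $P := \{\vr,\sv\}$ (or its down‑closure); I must verify $P$ is consistent, which amounts to ruling out $\vr$ and~$\sv$ pointing away from each other and ruling out that one forces the inverse of the other — precisely the configurations excluded by $\vr\not\le\vs$ together with nestedness of~$r$ and~$s$ and regularity (which kills the degenerate/small obstructions). Finally, to land among genuine bipartitions one needs $\beta(\vs)\ne\es$ and $\beta(\vs)\ne\O$ for every~$\vs$, equivalently that both~$\vs$ and~$\sv$ extend to consistent orientations — again immediate from the extension lemma applied to the (trivially consistent) singletons $\{\vs\}$ and $\{\sv\}$, using that $\tau$ has no degenerate elements.

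The main obstacle I anticipate is the separation step: producing, for $\vr\not\le\vs$, a consistent orientation that contains~$\vr$ and~$\sv$. One must be slightly careful that $\{\vr,\sv\}$ is indeed consistent as a partial orientation before feeding it to the extension lemma of~\cite{AbstractSepSys}; the case analysis (is $r=s$? are $\vr,\sv$ comparable? do they point away from each other?) is short but is the crux, and it is exactly here that all three hypotheses — nestedness, regularity, and no degenerate elements — get used. Everything else (that complementation and inclusion are respected, that the image is nested, that the target really consists of bipartitions) is then routine bookkeeping. It is worth noting that the resulting bipartitions are of~$\O$, which may be infinite even when $\tau$ is finite; the theorem as stated only claims representability by bipartitions of \emph{a} set, so no finiteness control is needed, and $\O$ is a perfectly legitimate ground set.
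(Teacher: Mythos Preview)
Your overall strategy---representing $\tau$ by bipartitions of $\O=\O(\tau)$---is sound, but it is not the route the paper takes for this theorem. The paper's proof uses the much more elementary ground set $X:=\tau$ itself: it sends $\vs$ to $(X_{\vs},X_{\sv})$, where $X_{\vs}$ consists of $\vs$ together with all elements strictly below~$\vs$ and their inverses. Disjointness then follows directly from the absence of trivial elements, coverage of~$\tau$ from nestedness, and order-preservation is immediate; no extension lemma from~\cite{AbstractSepSys} is needed. Your construction is precisely what the paper does \emph{afterwards}, as Theorem~\ref{Bipartitions}, where it is offered as a refinement on a smaller ground set.

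However, your monotonicity step contains a genuine error. You claim that if $\vr\le\vs$ then every consistent $O$ containing~$\vr$ must contain~$\vs$, because ``$\sv,\vr$ both present would violate consistency''. This is false: when $\vr\le\vs$ the set $\{\vr,\sv\}$ is a \emph{star} (since $\vr\le(\sv)^*$), and stars are the paradigm of consistency. Concretely, on a path $a\text{--}b\text{--}c$ with $\vr=(a,b)<\vs=(b,c)$, the consistent orientation $O_b=\{(a,b),(c,b)\}$ contains $\vr$ and~$\sv$. The correct implication runs the other way: if $\vs\in O$ and $\vr<\vs$ then $\rv\notin O$ by consistency, hence $\vr\in O$; so $\beta(\vs)\sub\beta(\vr)$, and $\beta$ is order-\emph{reversing}. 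The repair is to send $\vs$ to the ordered pair $(\beta(\sv),\beta(\vs))$ rather than to $\beta(\vs)$ alone---exactly the map $f$ of Theorem~\ref{Bipartitions}---after which your injectivity and non-emptiness arguments via the extension lemma go through as you outline (with the separating partial orientation taken as $\{\rv,\vs\}$ when $\vr\not\le\vs$, rather than $\{\vr,\sv\}$).
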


\begin{proof}
Given a regular tree set~$\tau$, let $X\!:=\tau$ and consider
 $$\vec N := \{\,(X\!_\vt,X\!_\tv)\mid \vt\in\tau\,\},$$
 where%
   \COMMENT{}
   $X\!_\vs$~consists of~$\vs$ and the elements of~$\tau$ strictly below~$\!\vs$ and their inverses (but not~$\sv$). The sets $X\!_\vs$ and~$X\!_\sv$ forming a pair in~$\vec N$ are disjoint because $\tau$ has no trivial elements, and have union all of~$\tau$ because $\tau$ is nested. Thus, $\vec N$~consists of bipartitions of~$X$, and in particular $\vec N\sub\vS(X)$. Clearly, $\vs\mapsto (X\!_\vs,X\!_\sv)$ is a bijection from $\tau$ to~$\vec N$ that commutes with the involutions on $\tau$ and~$\vS(X)$ and preserves their orderings. In particular, $\vec N$~is a tree set isomorphic to~$\tau$.
\end{proof}

The set $X=\tau$ in Theorem~\ref{SimpleBipartitions} is a little larger than necessary. This is best visible  when $\tau$ is the edge tree set $\tau(T)$ of a finite tree~$T$. Then the elements of $\tau$ correspond to (oriented) bipartitions of the vertices of~$T$. So we could represent $\tau$ by these oriented bipartitions of $X=V(T)$, which has about half as many elements as the set $\tau = \vec E(T)$ chosen for $X$ in our proof of Theorem~\ref{SimpleBipartitions}.

Section~\ref{sec:Gtrees} tells us how to generalize this idea to tree sets~$\tau$ that are not edge tree sets of a finite tree: the vertices of~$T$ in the example correspond to the consistent orientations of~$\tau$. So let us try to use these directly to form~$X$.

Given~$\tau$, let $\O=\O(\tau)$ be the set of consistent orientations of~$\tau$. Every $\vs\in\tau$ defines a bipartition $(A,B)$ of~$\O$: into the set $A = \O\!_{\sv}$ of consistent orientations of~$\tau$ containing~$\sv$ and the set $B = \O\!_{\vs}$ of those containing~$\vs$.%
   \COMMENT{}
    Note that this is indeed a bipartition of~$\O$;%
   \COMMENT{}
   in particular, $A$ and~$B$ are non-empty by \cite[Lemma~4.1\,(i)]{AbstractSepSys} applied to~$\{\sv\}$ and~$\{\vs\}$, respectively.%
   \COMMENT{}

The map
 $$f\colon \vs\mapsto (\O\!_{\sv},\O\!_{\vs})$$
 from $\tau$ to~$\vS(\O)$ respects the involutions (obviously) and the partial orderings on~$\tau$ and~$\vS(\O)$. Indeed, if $\vr<\vs$ then no consistent orientation of~$\tau$ containing~$\rv$ contains~$\vs$, so $\O\!_{\rv}\sub\O\!_{\sv}$.%
   \footnote{Note that we just used the regularity of~$\tau$: if $\vr$ is small, we can have $\vr<\vs=\rv$, in which case `both' $\rv$ and~$\vs$ can occur in the same consistent orientation of~$\tau$.}
   Conversely, let us show that if $\vr,\vs\in\tau$ are such that $\O\!_{\rv}\sub\O\!_{\sv}$, equivalently $\O\!_{\vs}\sub\O\!_{\vr}$, then $\vr\le\vs$. If not, then $\{\rv,\vs\}$ is consistent, and hence extends by \cite[Lemma~4.1\,(i)]{AbstractSepSys} to a consistent orientation of~$\tau$. This lies in $\O\!_{\vs}\sm\O\!_{\vr}$, contradicting our assumption.%
   \COMMENT{}

Let us show that $f$ is injective. Consider distinct $\vr,\vs\in\tau$. Swapping their names as necessary, we may assume that $\vr\not< \vs$. Then $\{\rv,\vs\}$ is consistent and hence, by \cite[Lemma~4.1\,(i)]{AbstractSepSys}, extends to a consistent orientation of~$\tau$. This lies in~$\O\!_{\vs}\sm\O\!_{\vr}$, so $\O\!_{\vs}\ne\O\!_{\vr}$ and hence $f(\vr)\ne f(\vs)$ as desired.

We have shown that~$\tau$ is isomorphic, as a separation system, to the image $\vec\N(\tau)$ of~$\tau$ in~$\vS(\O)$ under the map $f$ of separation systems. In particular, $\vec\N(\tau)$~is a tree set of set bipartitions isomorphic to~$\tau$:%
   \COMMENT{}

\begin{THM}\label{Bipartitions}
Given any regular tree set~$\tau$, the map $f\colon \vs\mapsto (\O\!_{\sv},\O\!_{\vs})$ from $\tau$ to~$\vec\N(\tau)$ is an isomorphism of tree sets.
\end{THM}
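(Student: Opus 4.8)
The plan is to read off Theorem~\ref{Bipartitions} from the discussion immediately preceding it, collecting the separate verifications already carried out into a single argument. By definition $\vec\N(\tau)$ is the image $f(\tau)\sub\vS(\O)$ of the map $f\colon\vs\mapsto(\O\!_\sv,\O\!_\vs)$. So what has to be checked is: (a) $f$ is well defined, i.e. each $(\O\!_\sv,\O\!_\vs)$ really is a bipartition of~$\O$ and thus a genuine element of~$\vS(\O)$; (b) $f$ commutes with the involutions; (c) $f$ is order-preserving and order-reflecting (so that it is an isomorphism onto its image as a \emph{poset}); (d) $f$ is injective; and finally (e) $\vec\N(\tau)$, being isomorphic to the tree set~$\tau$, is itself a tree set, so that $f$ is an isomorphism \emph{of tree sets}. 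Since an isomorphism of separation systems that happen to be tree sets is by definition an isomorphism of tree sets, (e) is immediate from (a)--(d).

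First I would dispose of (a): for every $\vs\in\tau$, since $\tau$ has no degenerate elements we have $\vs\ne\sv$, and by \cite[Lemma~4.1\,(i)]{AbstractSepSys} applied to the (trivially consistent) singletons $\{\vs\}$ and $\{\sv\}$ both $\O\!_\vs$ and $\O\!_\sv$ are non-empty; they are disjoint because no orientation contains both $\vs$ and~$\sv$, and their union is all of~$\O$ because every consistent orientation orients~$s$. Hence $(\O\!_\sv,\O\!_\vs)$ is a bipartition of~$\O$, so $f$ maps into~$\vS(\O)$. Point (b) is the observation that complementation in~$\O$ swaps $\O\!_\vs$ and~$\O\!_\sv$, which is exactly swapping the two coordinates, i.e. applying the involution of~$\vS(\O)$; so $f(\sv)=f(\vs)^*$.

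Next, (c): if $\vr\le\vs$ in~$\tau$ then (using regularity, so that $\vr<\vs$ forces $r\ne s$, and the case $r=s$ being trivial) no consistent orientation containing~$\rv$ can contain~$\vs$, as $\rv$ and~$\vs$ would point away from each other; hence $\O\!_\rv\sub\O\!_\sv$, which is precisely $f(\vr)\le f(\vs)$ in~$\vS(\O)$. For the converse direction suppose $\O\!_\rv\sub\O\!_\sv$ but $\vr\not\le\vs$; then by nestedness of~$\tau$ the pair $\{\rv,\vs\}$ is either comparable or points towards/away, and in each case one checks $\{\rv,\vs\}$ is consistent, so by \cite[Lemma~4.1\,(i)]{AbstractSepSys} it extends to a consistent orientation, which lies in $\O\!_\vs\sm\O\!_\rv$ — contradicting the inclusion. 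This gives order-reflection. For (d) one argues almost identically: given distinct $\vr,\vs$, after possibly swapping we may take $\vr\not<\vs$, extend the consistent set $\{\rv,\vs\}$ to a consistent orientation in $\O\!_\vs\sm\O\!_\vr$, and conclude $f(\vr)\ne f(\vs)$.

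Combining, $f$ is a bijection from~$\tau$ onto $\vec\N(\tau)=f(\tau)$ that respects involution and respects and reflects the order, hence an isomorphism of separation systems; since $\tau$ is a tree set, so is its isomorphic copy $\vec\N(\tau)$, and therefore $f$ is an isomorphism of tree sets. I do not anticipate a serious obstacle, since every ingredient has been verified in the text; the only point requiring mild care is the order-reflection step (c), where one must invoke nestedness to reduce to a consistent two-element set before applying the extension lemma, and the bookkeeping that $\O\!_\rv\sub\O\!_\sv$ is equivalent to $\O\!_\vs\sub\O\!_\vr$ (by taking complements in~$\O$).
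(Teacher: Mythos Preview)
Your proposal is correct and follows essentially the same approach as the paper, which proves the theorem in the paragraphs immediately preceding its statement. One minor remark: in step~(c) you need not invoke nestedness---the set $\{\rv,\vs\}$ is consistent directly from $\vr\not\le\vs$ (with $r\ne s$), since inconsistency would by definition force $\vr<\vs$; the paper's argument is accordingly a touch simpler at this point.
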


We have seen that every symmetric nested set $\vec N$ of bipartitions of a set~$X$ is a regular tree set~$\tau$, which can in turn be represented as a tree set $\vec\N$ of bipartitions of a~set, e.g., the set $\O$ of its consistent orientations. However, in the transition $\vec N\to\tau\to\vec\N$ we are likely to lose some information: we shall not be able to recover~$\vec N$ from~$\vec\N$, not even up to a suitable bijection between $X$ and~$\O$.%
   \COMMENT{}

One reason is that $X$ may be `too large', larger than~$\O$. This happens if $X$ has distinct elements $x,x'$ that are {\em indistinguishable\/} by~$\vec N$: if no partition in~$\vec N$ assigns $x$ and~$x'$ to different partition classes.%
   \footnote{This was the case for $X=\tau$ in our proof of Theorem~\ref{SimpleBipartitions}: if $\tau$ is the edge tree set of a tree~$T$, say, then for every node $t\in T$ the elements of~$\lowbkwd F02_t$ cannot be distinguished by any separation of the form $(X\!_\vs,X\!_\sv)$.%
   \COMMENT{}
   More generally, $\vec N$~cannot distinguish the inverses of the elements of any splitting star in~$\tau$.}
   For this does not happen in~$\O$: distinct $O',O''\in \O$ are always distinguished by a separation $(\O',\O'')\in\vec\N$.%
   \COMMENT{}
   Indeed, as $O'\ne O''$ there exists $\vs\in\tau$ with $\sv\in O'$ and $\vs\in O''$. Then $O'\in\O'$ but $O''\in\O''$ for $(\O',\O'') = f(\vs)$.

But $X$ can also be `too small', smaller than~$\O$. This happens if $\vec N$ has a consistent orientation that is not of the form $\{\,(A,B)\in\vec N\mid x\in B\,\}$ for any $x\in X$. For every consistent orientation of~$\vec\N$ does have this form: it is the image under~$f$ of a consistent orientation~$O$ of~$\tau$, and hence equal to $\{\,(\O',\O'')\in\vec\N\mid O\in\O''\}$.

\begin{EX}\rm
Let $X$ be the vertex set of a ray~$R$, let $\vec N$ be the set of bipartitions of~$X$ corresponding to the oriented edges of~$R$, and choose from every inverse pair of separations in~$\vec N$ the separation that corresponds to the oriented edge of~$R$ which points towards its tail. This is a consistent orientation of~$\vec N$ not of the form $\{\,(A,B)\in\vec N\mid x\in B\,\}$.
\end{EX}

\begin{EX}\label{3star}\rm
Let $\vec N=\tau(T)$ where $T$ is a 3-star, orient every edge towards the centre of that star, and now consider the star of set separations that this induces on the set $X$ of only the leaves $x_1,x_2,x_3$.%
   \COMMENT{}
   Once more, this is a consistent orientation $O$ of $\vec N$ that is not of the above form, since no leaf $x$ lies in $\{x_j,x_k\}$ for each of the three partitions $(\{x_i\},\{x_j,x_k\})$ of $X$ that form~$O$.
\end{EX}

However, if we assume for $\vec N$ these two properties that $\vec\N$ will invariably have, we can indeed recover it from~$\vec\N$ in the best way possible, namely, up to a specified bijection between the ground sets involved:

\begin{THM}\label{RecoverN}
Let $\vec N$ be a tree set of bipartitions of a set~$X$ such that
\begin{itemize}\itemsep=0pt
   \item for all distinct $x,y\in X$ there exists $(A,B)\in\vec N$ such that $x\in A$ and~${y\in B}$;%
  \COMMENT{}
   \item for every consistent orientation $O$ of~$\vec N$ there exists an $x\in X$ such that $O = \{\,(A,B)\in\vec N\mid x\in B\,\}$.
\end{itemize}
Consider any isomorphism $g\colon\vec N\to\tau$ of tree sets. Let $f\colon\tau\to\vec\N(\tau)$ be the tree set isomorphism from Theorem~{\rm\ref{Bipartitions}}. Then there is a bijection%
   \footnote{formally, an `isomorphism of sets'}
 $h\colon X\to\O(\tau)$ whose natural action on~$\vec N$%
   \COMMENT{}
   equals $f\circ g$. In this way, $\vec N$~is canonically isomorphic, given~$g$, to the tree set $\vec\N(\tau)$.
\end{THM}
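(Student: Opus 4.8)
The plan is to construct $h$ directly from the realizability hypothesis. For each $x\in X$ let $O_x := \{\,(A,B)\in\vec N\mid x\in B\,\}$. Since every element of $\vec N$ is a bipartition of~$X$, and hence splits $X$ into two disjoint classes, $O_x$ contains exactly one orientation of each separation of~$\vec N$, so it is an orientation of~$\vec N$; and it is consistent, for if $(B,A),(C,D)\in O_x$ with $(A,B)<(C,D)$, then $x\in A\sub C$ yet also $x\in D=X\sm C$, which is absurd. As $g$ is an isomorphism of separation systems, it carries consistent orientations of~$\vec N$ to consistent orientations of~$\tau$, and conversely; so $g(O_x)\in\O(\tau)$, and I set $h(x):=g(O_x)$.

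Next I would verify that $h$ is a bijection. For injectivity, let $x\ne y$; by the first hypothesis there is $(A,B)\in\vec N$ with $x\in A$ and $y\in B$, and then $(A,B)\in O_y$ while $(A,B)\notin O_x$ (as $x\in A$ and $A\cap B=\es$), so $O_x\ne O_y$ and hence $h(x)\ne h(y)$. For surjectivity, given $O\in\O(\tau)$ the set $g^{-1}(O)$ is a consistent orientation of~$\vec N$, so by the second hypothesis $g^{-1}(O)=O_x$ for some $x\in X$, whence $h(x)=g(O_x)=O$.

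It then remains to identify the natural action of~$h$ on~$\vec N$ --- the map sending each $(A,B)\in\vec N$ to the bipartition $(h[A],h[B])$ of~$\O(\tau)$, where $h[A]:=\{\,h(x)\mid x\in A\,\}$ --- with $f\circ g$. Fix $(A,B)\in\vec N$ and put $\vs:=g(A,B)$, so $f(\vs)=(\O_{\sv},\O_{\vs})$. I claim $h[B]=\O_{\vs}$. If $x\in B$ then $(A,B)\in O_x$, so $\vs=g(A,B)\in g(O_x)=h(x)$ and hence $h(x)\in\O_{\vs}$; this gives $h[B]\sub\O_{\vs}$. Conversely, if $O\in\O_{\vs}$ then $g^{-1}(O)$ is a consistent orientation of~$\vec N$ containing $(A,B)$, so by the second hypothesis $g^{-1}(O)=O_x$ for some $x$, where $(A,B)\in O_x$ forces $x\in B$ and $h(x)=g(O_x)=O$; thus $O\in h[B]$. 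Hence $h[B]=\O_{\vs}$, and the symmetric argument for $\sv$ and $(B,A)$ gives $h[A]=\O_{\sv}$. Therefore $(h[A],h[B])=(\O_{\sv},\O_{\vs})=f(g(A,B))$, as desired. The closing sentence of the theorem is then immediate: $f\circ g$ is an isomorphism of tree sets, and what has been shown is that it is realized by the ground-set bijection~$h$, which depends only on the chosen~$g$.

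I do not expect a genuine obstacle here; the whole argument is a matter of unwinding definitions. The two points that call for a little care are checking that each $O_x$ is really a \emph{consistent} orientation of~$\vec N$ (so that $h$ is well defined and takes values in~$\O(\tau)$), and making sure that wherever the realizability hypothesis on~$\vec N$ is invoked it is applied to something that genuinely is a consistent orientation of~$\vec N$ --- which is legitimate precisely because $g$, being a tree-set isomorphism, transports consistent orientations in both directions between $\vec N$ and~$\tau$.
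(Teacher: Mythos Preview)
Your proof is correct and follows essentially the same approach as the paper: define $h(x):=g(O_x)$, use the two hypotheses to establish injectivity and surjectivity, and then identify the induced action of~$h$ on~$\vec N$ with $f\circ g$. The paper merely asserts this last identification as ``easily seen'', whereas you spell it out in full by verifying $h[B]=\O_{\vs}$ and $h[A]=\O_{\sv}$; otherwise the arguments coincide.
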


\proof
Given $x\in X$, let $O_x = \{\,(A,B)\in\vec N\mid x\in B\,\}$; this is clearly a consistent orientation of~$\vec N$. Hence%
   \COMMENT{}
   $h\colon x\mapsto g(O_x)$ is a well defined map from $X$ to~$\O(\tau)$. It is injective by the first condition in the theorem, and surjective by the second. Its action on the subsets of $X$ therefore%
   \COMMENT{}
   maps partitions of $X$ to partitions of~$\O(\tau)$.%
   \COMMENT{}
   The induced action of $h$ on~$\vec N$%
   \COMMENT{}
   is easily seen to be equal to~$f\circ g$,%
   \COMMENT{}
   which is an isomorphism of tree sets by the choice of~$g$ and Theorem~{\rm\ref{Bipartitions}\,(ii)}.
\endproof

Theorem~\ref{Bipartitions} provides us with another representation of an abstract regular tree set $\tau$ as a tree set $\vec\N$ of bipartitions of a set, and Theorem~\ref{RecoverN} shows that this representation describes, up to isomorphisms of tree sets, all the representations of $\tau$ as a tree set $\vec N$ of bipartitions of a set $X$ that is not unnecessarily large (ie, constains no two elements indistinguishable by the tree set) but large enough that every consistent orientation of~$\vec N$ is induced by one of its elements~$x$ (i.e., has the form $O_x=\{\,(A,B)\in\vec N\mid x\in B\,\}$ for some $x\in X$).

This latter requirement is quite stringent: there are many natural tree sets of bipartitions of a set whose consistent orientations are not all induced by an element of that set. Among these are the bipartitions of the leaves of a finite tree defined by its 1-edge cuts, as in the 3-star of Example~\ref{3star}:

\begin{EX}\label{FiniteTreeLeaves}\rm
Consider the edge tree set of a finite tree~$T$. Every edge $\ve\in\vec E(T)$ defines a bipartition $(A,B)$ of the vertices of~$T$: into the set $B$ of vertices to which $\ve$ points and the set $A$ of vertices to which $\ev$ points. These bipartitions of $V(T)$ are nested, and the tree set they form is canonically isomorphic to the tree set~$\tau(T)$ via $\ve\mapsto (A,B)$. Now consider the bipartitions $(A',B')$ which these $(A,B)$ induce just on the set of leaves of~$T$. These~$(A',B')$, too, will be distinct for different edges $\ve\in\vec E(T)$ as long as $T$ has no vertex of degree~2, and they will be nested in the same way as the $(A,B)$ that defined them. So these bipartitions of the leaves of~$T$ will still form a tree set isomorphic to~$\tau(T)$, via $\ve\mapsto (A',B')$. In particular, we can recover $(A,B)$ from~$(A',B')$ from these isomorphisms, as $(A',B')\mapsto\ve\mapsto (A,B)$.%
   \footnote{In fact, combining Theorems~\ref{Trees} and~\ref{RecoverN} we can reconstruct the entire tree~$T$ from the set $L$ of its leaves and the tree set of bipartitions of~$L$ that its edges induce.}%
   \COMMENT{}
\end{EX}

Let us generalize this example to arbitrary regular tree sets~$\tau$: rather than implementing $\tau$ as a tree set of bipartitions of the entire set~$\O=\O(\tau)$, let us use a subset $\O'$ of~$\O$ which, if $\tau = \tau(T)$ for a finite tree~$T$, corresponds to the leaves of~$T$ (while $\O$ corresponds to its entire vertex set).

If $\tau$ is finite, $\O'$~will be the set of those consistent orientations of~$\tau$ that have a greatest element,%
   \COMMENT{}
   which are precisely the down-closures of the maximal elements of~$\tau$.%
   \COMMENT{}
   In general, define $\O'(\tau)$ as the set of all the {\em directed\/} elements~$O$ of~$\O(\tau)$: those such that for all $\vr,\vs\in O$ there exists $\vt\in O$ such that ${\vr,\vs\le\vt}$.%
   \footnote{If $\tau=\tau(T)$ for an infinite tree~$T$, the orientations in $\O'(\tau)$ will be those towards a leaf or towards an end of~$T$.}

\begin{LEM}\label{Odash}
Every element of a regular%
   \footnote{This assumption helps shorten the proof but is not necessary.}
    tree set~$\tau$ lies in some $O\in\O'(\tau)$.
\end{LEM}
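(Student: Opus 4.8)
The plan is, given an element $\vso\in\tau$, to produce a directed consistent orientation containing $\vso$ as the down-closure of a maximal chain through $\vso$. So first I would let $C\sub\tau$ be a chain containing $\vso$ that is $\sub$-maximal with this property. Such a $C$ exists by Zorn's lemma: $\{\vso\}$ is a chain through $\vso$, and the union of any $\sub$-chain of chains through $\vso$ is again a chain through $\vso$. I then claim that $O:=\dcl(C)_\tau=\{\,\vr\in\tau\mid \vr\le\vt\text{ for some }\vt\in C\,\}$ is the required element of $\O'(\tau)$.

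Three of the four things to verify are quick, all using that $\tau$ is a regular tree set. That $O$ is \emph{directed}: given $\vr,\vs\in O$, choose elements of $C$ above each; since $C$ is a chain one of them dominates the other and so lies in $C\sub O$ above both $\vr$ and $\vs$. That $O$ is \emph{consistent}: if $\rv,\vs\in O$ with $\vr<\vs$ for distinct $r,s$, pick $\vt\in C$ dominating witnesses in $C$ of both $\rv$ and $\vs$; then $\vr<\vs\le\vt$ gives $\tv<\rv$, and together with $\rv\le\vt$ this yields $\tv<\vt$, so $\tv$ is small --- impossible as $\tau$ is regular. That $O$ is \emph{antisymmetric}: if $\vs,\sv\in O$, the same device yields $\vt\in C$ with $\tv\le\vs\le\vt$ and $\tv\le\sv\le\vt$, so again $\tv\le\vt$, contradicting regularity (or, if $\tv=\vt$, the fact that tree sets have no degenerate elements). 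Since $\vso\in C\sub O$, it then remains only to show that $O$ is an orientation, i.e.\ that it contains an orientation of every $s\in S$.

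This last point is the crux, and the step I expect to be the main obstacle. Suppose, for contradiction, that neither $\vs$ nor $\sv$ lies in $O$. Then $s\ne t$ for every $\vt\in C$, and no $\vt\in C$ satisfies $\vs\le\vt$ or $\sv\le\vt$. Using that $\tau$ is nested (so $s$ is nested with every such $t$) together with the absence of trivial, co-trivial and small separations, I would check that each $\vt\in C$ satisfies exactly one of $\vt<\vs$ and $\vt<\sv$; and that inside the chain $C$ these two alternatives cannot both occur, since $\vt_1<\vs$ and $\vt_2<\sv$ with $\vt_1\le\vt_2$ (say) would give $\vt_1<\vs$ and $\vt_1<\sv$, making $\vt_1$ trivial. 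Hence all of $C$ lies strictly below one fixed orientation of $s$, say below $\vs$; but then $C\cup\{\vs\}$ is a chain through $\vso$ properly extending $C$, contradicting the maximality of $C$. Thus $O$ orients $s$ after all, and so $O\in\O'(\tau)$ with $\vso\in O$. The only genuine work is this case analysis showing that a maximal chain already decides every separation; everything else is routine manipulation of \eqref{invcomp} and the defining properties of a regular tree set.
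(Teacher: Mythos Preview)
Your proof is correct and follows essentially the same approach as the paper: take a maximal chain through the given element and show that its down-closure is a directed consistent orientation of~$\tau$. The paper's argument for ``orients everything'' is marginally slicker (it observes directly that maximality of the chain yields, for each~$r$, a chain element $\vt$ with $\vt\not<\vr$ and $\vt\not<\rv$, whence nestedness forces one of $\vr,\rv$ below~$\vt$), but your case analysis reaches the same conclusion by the same mechanism.
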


\begin{proof}
Given $\vs\in\tau$, let $O$ be the down-closure in~$\tau$ of some maximal%
   \COMMENT{}
   chain $\gamma$ in~$\tau$ containing~$\vs$. Using the regularity of~$\tau$ it is easy to check that $O$~is consistent and antisymmetric.%
   \COMMENT{}

It remains to show that $O$ orients all of~$\tau$. By its maximality, $\gamma$~contains for every given $\vr\in\tau$ some $\vs$ such that $\vs\not < \vr$. Applying this to both $\vr$ and~$\rv$ we obtain some $\vs\in\gamma$%
   \COMMENT{}
   such that neither $\vr$ nor~$\rv$ lies above~$\vs$. But since $\tau$ is nested, one of $\vr,\rv$ is comparable with~$\vs$. It thus lies below~$\vs$, and hence in~$O=\dcl(\gamma)$.
   \end{proof}

Let $\O'\!\!_{\vs}:= \{\,O\in\O'\mid \vs\in O\,\}$. With the same proof as in Theorem~\ref{Bipartitions}, the map
 $$f'\colon \vs\mapsto (\O'\!\!_{\sv},\O'\!\!_{\vs})$$
   from $\tau$ to the set $\vS(\O')$ of bipartitions of~$\O'$ communtes with the involutions and respects the partial orderings of the separation systems $\tau$ and~$\vS(\O')$. In particular, $\vec\N'(\tau):= f'(\tau)$ is a tree set~, and $f'$ is an isomorphism of tree sets whenever it is injective.

In our Example~\ref{FiniteTreeLeaves} where $\tau$ is edge tree set of a finite tree, $f'$ was injective if (and only if) that tree has no vertex of degree~2. For arbitrary regular tree sets~$\tau$ let us say that $\tau$ {\em branches everywhere\/}, or is {\em ever-branching\/}, if it contains no $\sub$-maximal proper star of order~2.%
   \COMMENT{}

\begin{LEM}\label{everbranch}
The map $f'$ is injective if and only if $\tau$ branches everywhere.
\end{LEM}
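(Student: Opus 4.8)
The plan is to argue both directions by understanding exactly when the map $f'$ identifies two distinct elements $\vr,\vs\in\tau$. By the same analysis as in Theorem~\ref{Bipartitions}, $f'$ already respects the ordering and the involution, and it fails to be injective precisely when there are distinct separations $\vr\ne\vs$ that are not comparable (swapping names, $\vr\not<\vs$ and $\vs\not<\vr$, i.e.\ $r$ and $s$ are nested with $\vr,\vs$ pointing towards each other or away, or in fact $\vr\le\sv$ or $\sv\le\vr$) and yet $\O'\!\!_{\vr}=\O'\!\!_{\vs}$. So the core task is: show $\O'\!\!_{\vr}=\O'\!\!_{\vs}$ with $\vr\ne\vs$ holds if and only if $\{\vr,\vs\}$ — or rather $\{\sv,\vr\}$ after suitable renaming — is a $\sub$-maximal proper star of order~2 in $\tau$. (Note a genuine difference from Theorem~\ref{Bipartitions}: there, $f$ was injective for \emph{every} regular $\tau$ because $\O$ is large enough to separate all pairs; here $\O'$ may be too small, and the obstruction is exactly a maximal $2$-star.)

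For the ``if'' direction (ever-branching $\Rightarrow$ injective), I~would argue the contrapositive. Suppose $f'(\vr)=f'(\vs)$ with $\vr\ne\vs$. After renaming we may assume $\vr\le\sv$ (by nestedness and consistency-type bookkeeping, the relevant case is $\vr$ and $\sv$ pointing towards each other; I~would check that the cases $\vr<\vs$ and $\vr>\vs$ are ruled out because $f'$ respects the order strictly on incomparable-or-comparable pairs as in Theorem~\ref{Bipartitions}, using regularity). Now I~claim $\{\vr,\sv\}$ is a proper star: it is a star since $\vr\le\sv$, and properness (no other of the four relations holds) follows because any additional relation would, via \eqref{invcomp} and regularity, force $\vr$ and $\sv$ to be comparable or one of them small. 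Then I~would show this star is $\sub$-maximal: any proper star $\sigma\supsetneq\{\vr,\sv\}$ of order $\ge 3$ would contain some $\vt$ with $\vr,\sv\le\tv$, i.e.\ $\vt\le\rv$ and $\vt\le\vs$; using Lemma~\ref{Odash} pick $O\in\O'$ containing $\vt$, and note $O$ is directed, so I~can find a consistent directed orientation containing $\vt$ that orients $r$ by... the key point is to produce an element of $\O'$ that contains $\vr$ but not $\vs$ (or vice versa), contradicting $\O'\!\!_{\vr}=\O'\!\!_{\vs}$. Concretely: if there is room ``above'' to branch, one can build a maximal chain through $\vr$ avoiding $\vs$ whose down-closure is in $\O'$, as in the proof of Lemma~\ref{Odash}; this directed orientation lies in $\O'\!\!_{\vr}\sm\O'\!\!_{\vs}$.

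For the ``only if'' direction (not ever-branching $\Rightarrow$ not injective), suppose $\{\vr,\sv\}$ is a $\sub$-maximal proper star of order~2, so $\vr\le\sv$ and $\vr\ne s$. I~must show $\O'\!\!_{\vr}=\O'\!\!_{\vs}$, equivalently (taking complements in $\O'$) $\O'\!\!_{\rv}=\O'\!\!_{\sv}$; by symmetry it suffices to show: every directed $O\in\O'$ containing $\rv$ also contains $\sv$ (then the other inclusion follows by the symmetric statement with $\vr,\vs$ swapped, using $\vr\le\sv\Leftrightarrow\vs\le\rv$). So let $O\in\O'$ with $\rv\in O$. If $\vs\in O$, then $O$ is directed, so some $\vt\in O$ has $\rv,\vs\le\vt$; but then $\{\tv,\vr\}\cup\{\tv,\sv\}$... more carefully, $\rv\le\vt$ and $\vs\le\vt$ together with consistency force a proper star $\{\vr,\sv,\tv\}$ (after checking the relations), of order~$3$, extending $\{\vr,\sv\}$ — and one checks it (or a $\sub$-maximal proper star above it) strictly contains $\{\vr,\sv\}$, contradicting $\sub$-maximality. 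Hence $\sv\in O$, as required. I~would double-check the degenerate sub-cases (e.g.\ $\vt$ comparable with $\vr$ or $\sv$) separately using regularity, since those are where the ``proper star of order~2'' hypothesis does its real work.

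The main obstacle I~anticipate is the $\sub$-maximality bookkeeping in both directions: translating ``$\O'\!\!_{\vr}=\O'\!\!_{\vs}$'' into the existence of a common separation $\vt$ lying strictly above both $\vr$ and $\sv$ (so that $\{\vr,\sv,\tv\}$ is a strictly larger proper star), and conversely showing that if no such $\vt$ exists then one can actually \emph{construct} a directed orientation — an element of $\O'$, not just of $\O$ — that separates $\vr$ from $\vs$. The latter is where Lemma~\ref{Odash} and the maximal-chain construction in its proof are essential, and care is needed to ensure the chain can be chosen through $\vr$ while avoiding $\sv$; the order-$2$-star hypothesis is precisely what guarantees the ``branching room'' needed for this.
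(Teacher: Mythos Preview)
Your plan has the right core insight---directedness of $O\in\O'$ produces a third separation above any two, yielding a larger star---and this matches the paper. But there is a systematic mix-up in your labelling that leads to a real gap.

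The star condition on a set $\{\va,\vb\}$ says $\va\le\vb^*$. So if $\{\vr,\sv\}$ is your star, the relation is $\vr\le(\sv)^*=\vs$, not $\vr\le\sv$ as you write. More importantly, the pair that $f'$ actually \emph{identifies} is not the star itself but one element together with the inverse of the other: in the paper's forward direction, the maximal star is $\sigma=\{\vr,\vs\}$ (so $\vr\le\sv$), and what gets proved is $f'(\vr)=f'(\sv)$. Thus the collapsed pair satisfies $\vr<\sv$, i.e.\ is \emph{comparable}. Your contrapositive argument assumes the opposite: you take the collapsed pair $(\vr,\vs)$, claim you can reduce to $\vr\le\sv$ (the star relation), and say the comparable cases $\vr<\vs$, $\vr>\vs$ are ``ruled out because $f'$ respects the order strictly''. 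This is exactly backwards---the comparable case is the one that occurs---and the justification is circular: strict order-preservation of $f'$ on $\O'$ is equivalent to injectivity here, which is what you are trying to prove. (For $f$ on all of~$\O$ in Theorem~\ref{Bipartitions} this worked, but only because $\O$ is large enough; for $\O'$ it is precisely the point at issue.)

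The paper avoids this by arguing the injective direction \emph{directly} rather than by contrapositive: given distinct $\vr,\vs$, it splits into the three cases $\{\vr,\vs\}$ inconsistent, $\{\vr,\vs\}$ a star, and $\vr<\vs$. The first two are dispatched in one line via Lemma~\ref{Odash}. In the third, $\{\vr,\sv\}$ is a proper $2$-star; ever-branching gives a third element $\tv$ with $\vr,\sv<\vt$, and any $O\in\O'$ containing $\vt$ then contains $\vr$ but not~$\vs$ by consistency. This is cleaner than building a maximal chain by hand, and it makes transparent where the $2$-star hypothesis enters. Your ``only if'' direction is essentially the paper's argument once the labelling is fixed (with the star $\{\vr,\vs\}$ one shows no $O\in\O'$ contains both $\vr,\vs$: directedness would give $\vt\ge\vr,\vs$ and hence the larger star $\{\vr,\vs,\tv\}$).
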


\begin{proof}
Suppose first that $\tau$ does not branch everywhere. Let $\sigma=\{\vr,\vs\}\sub\tau$ be a $\sub$-maximal proper star of order~2. Then also $r\ne s$, since $\tau$ is regular. We show that $f'$ is not injective, by proving that $f'(\vr)=f'(\sv)$ and $f'(\vs) = f'(\rv)$.%
   \COMMENT{}

We have to check that every $O\in\O'$ contains either both $\vr$ and~$\sv$ or both $\vs$ and~$\rv$. By consistency and since $\sigma$ is a star, $O$~cannot contain~$\amgis = \{\rv,\sv\}$. But neither can it contain~$\sigma$. Indeed, suppose $\sigma\sub O$. Since $O$ is directed, there exists $\vt\in O$ such that $\vr,\vs\le\vt$. Since $\tau$ is regular, this implies that $\tv$ is neither $\vr$ nor~$\vs$. But then $\sigma\cup\{\tv\}$ is a star contradicting the maximality of~$\sigma$.

Conversely, assume that $\tau$ is ever-branching, and thus contains no $\sub$-maximal proper star of order~2. To show that $f'$ is injective, consider distinct $\vr,\vs\in\tau$. We shall find an $O\in\O'$ that contains one of these but not the other; then $f'(\vr)\ne f'(\vs)$ by definition of~$f'$.

If $\{\vr,\vs\}$ is inconsistent, pick any $O\in\O'$ containing~$\vr$ (which exists by Lemma~\ref{Odash}); then $\vs\notin O$ by the consistency of~$O$. If $\{\vr,\vs\}$ is a star, pick any $O\in\O'$ containing~$\sv$; this will also contain $\vr\le\sv$ but not~$\vs$. Finally, assume that $\vr < \vs$ (say).%
   \COMMENT{}
   Since $\tau$ is regular, $\sigma=\{\vr,\sv\}$ is a proper star of order~2 by \cite[Lemma~3.4\,(ii)]{AbstractSepSys}. As $\tau$ is ever-branching, $\sigma$~is not maximal, so $\tau$ contains a bigger star $\{\vr,\sv,\tv\}$. Then $\vr<\vt$ as well as~$\sv < \vt$. Use Lemma~\ref{Odash} to find an orientation $O\in\O'$ containing~$\vt$. By consistency, then, $O$~also contains both $\vr,\sv<\vt$. Hence $O$ contains $\vr$ but not~$\vs$, as desired.
   \end{proof}

We have thus proved that ever-branching regular tree sets can be represented as tree sets of bipartitions of their directed consistent orientations:

\begin{THM}\label{BipartitionsSparse}
Let $\tau$ be any regular tree set. The map $f'\colon \vs\mapsto (\O'\!\!_{\sv},\O'\!\!_{\vs})$ from $\tau$ to~$\vec\N'(\tau)$ is an isomorphism of tree sets if and only if it is injective, which it is if and only if $\tau$ branches everywhere.
\end{THM}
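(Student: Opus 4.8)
The plan is to assemble the statement from material already in place in this section, since the theorem is in effect a summary rather than a fresh argument. The second biconditional needs nothing new: that $f'$ is injective precisely when $\tau$ branches everywhere is the content of Lemma~\ref{everbranch}. For the first biconditional, one direction is immediate~-- an isomorphism of tree sets is a bijection, hence injective~-- while the converse, that an injective $f'$ is already an isomorphism of tree sets, was recorded in the remark preceding the theorem: $f'$ always commutes with the involutions of $\tau$ and of $\vS(\O')$, is order-preserving, and has image the tree set $\vec\N'(\tau)$, so once it is injective it is an involution-respecting order-preserving bijection onto a tree set. Chaining the two biconditionals yields the theorem as stated.

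The one ingredient that is not purely formal~-- folded into that remark, and the step I expect to be the real obstacle~-- is that an injective $f'$ also \emph{reflects} the ordering, i.e.\ that $\O'\!\!_{\rv}\sub\O'\!\!_{\sv}$ forces $\vr\le\vs$; this is exactly what upgrades the order-preserving injection $f'$ to an order isomorphism onto its image. I would not try to mimic the converse half of the proof of Theorem~\ref{Bipartitions}, where a consistent partial orientation is extended to a consistent orientation of~$\tau$: a consistent pair of separations need not extend to a \emph{directed} consistent orientation~-- for instance, if $\tau$ is the edge tree set of a four-vertex path, the pair consisting of the first edge oriented towards the second vertex and the last edge oriented towards the third vertex is consistent but lies in no directed consistent orientation, since the only directed consistent orientations of $\tau$ point towards one of the two leaves~-- so that route is blocked. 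Before the direct argument I would record that $\vec\N'(\tau)$ is itself a \emph{regular} tree set: assuming $\tau\ne\es$ (the empty case being trivial), $\O'(\tau)$ is non-empty by Lemma~\ref{Odash}, its bipartitions have no small elements, and $\vec\N'(\tau)\sub\vS(\O'(\tau))$ inherits this.

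For the reflection step, suppose $\vr\not\le\vs$ while $f'(\vr)\le f'(\vs)$. Since $\tau$ is nested, $r$ and $s$ have comparable orientations, so either $\vs<\vr$ strictly, or $\vr$ is comparable with~$\sv$. If $\vs<\vr$, then order-preservation and injectivity of $f'$ give $f'(\vs)<f'(\vr)$, contradicting $f'(\vr)\le f'(\vs)$. If instead $\vr\le\sv$ or $\sv\le\vr$, then applying the order-preserving, involution-commuting map $f'$~-- and writing $\vp:=f'(\vs)$, so that $f'(\sv)=\pv$ in $\vS(\O'(\tau))$~-- yields $f'(\vr)\le\pv$ or $\pv\le f'(\vr)$; combining this with $f'(\vr)\le\vp$, each sub-case leads to a contradiction: either directly, since some element of $\vec\N'(\tau)$ would then be small or trivial, which is impossible as $\vec\N'(\tau)$ is regular, or through an equality $f'(\vr)=\vp$, forcing $\vr=\vs$ by injectivity, or $f'(\vr)=\pv$, forcing $r$ to be degenerate, which is impossible in a tree set. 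Hence $\vr\le\vs$, completing the reflection step; everything else in the theorem is routine bookkeeping.
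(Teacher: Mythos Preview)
Your proposal is correct, and in substance it matches the paper's treatment: the paper gives no separate proof of this theorem, presenting it as a summary of the remark before Lemma~\ref{everbranch} together with Lemma~\ref{everbranch} itself.

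Where you go beyond the paper is in the order-reflection step, and your caution there is justified. The paper's remark asserts that $f'$ ``respects the partial orderings'' with ``the same proof as in Theorem~\ref{Bipartitions}'', but for reflection that proof extends a consistent pair $\{\rv,\vs\}$ to a consistent orientation of~$\tau$; your four-vertex path example shows correctly that such a pair need not extend to a \emph{directed} consistent orientation, so the literal transfer fails. Your replacement argument~-- exploiting that $\tau$ is nested (so only four comparability cases arise) and that $\vS(\O')$, hence $\vec\N'(\tau)$, is regular~-- is the natural way to close this, and it works.

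One small inaccuracy in your case analysis: from $f'(\vr)=\pv$ and injectivity you get $\vr=\sv$, which says $r=s$ as unoriented separations, not that $r$ is degenerate. The contradiction in that sub-case comes rather from $\pv=f'(\vr)\le f'(\vs)=\vp$, making $\pv$ small, against the regularity of $\vec\N'(\tau)$. With that rewording the argument is clean.
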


What about an analogue of Theorem~\ref{RecoverN} for $f'$ and ever-branching~$\tau$? As before, the tree set $\vec\N'(\tau)$ of bipartitions of~$\O'$ distinguishes every two elements of~$\O'$: they will differ on some~$s$ and hence be separated by both~$f'(\hskip-1pt\vs)$ and~$f'(\hskip-1pt\sv)$. Hence, as in Theorem~\ref{RecoverN}, $\vec N$ must likewise distinguish every two points of~$X$ if we wish to recover a copy of it on~$\O'$. Hence any maximal element of~$\vec N$ must be of the form $(X\sm\{x\},\{x\})$: it cannot be a separation $(A,B)$ with $|B|\ge 2$.%
   \COMMENT{}

The second premise in Theorem~\ref{RecoverN}, however, can now be weakened substantially: we shall only need it for directed consistent orientations $O$ of~$\vec N$, not for all its consistent orientations. But for these we do need this assumption (more precisely, for sufficiently many of them; cf.\ Example~\ref{T2} below): we shall require that every directed consistent orientation $O$ of~$\vec N$ have the form $O = \{\,(A,B)\in\vec N\mid x\in B\,\}$ for some $x\in X$.

To see that this is necessary, consider what happens in~$\vec\N'$. Every directed consistent orientation $\tilde\O$%
   \COMMENT{}
   of~$\vec\N'$ will correspond via~$f'$ to a consistent orientation $O$ of~$\tau$, so that $f'(O)=\tilde\O$. By definition of~$\O'\!\!_{\vs}$ this $O$ lies, for every~$\vs\in O$, in the right partition set of~$f'(\vs) = (\O'\!\!_{\sv},\O'\!\!_{\vs})$.%
   \COMMENT{}
   Hence
  $$\tilde\O = \{\,f'(\vs)\mid \vs\in O\,\} = \{\,(\P,\Q)\in\vec\N'\mid O\in\Q\,\}\,.$$
 Hence if $f'$ is to be an isomorphism of tree sets mapping $O$ to~$\tilde\O$, then $O$ must have the corresponding form of $O = \{\,(A,B)\in\vec N\mid x\in B\,\}$ for some $x\in X$.%
   \COMMENT{}

\begin{EX}\label{T2}\rm
Consider as $\tau$ the edge tree set of the infinite binary tree~$T$. The directed consistent orientations of~$\tau$ are those that orient all the edges of~$T$ towards some fixed end~$\omega$. Our aim is to represent~$\tau$ by bipartitions of the set $X=\Omega$ of ends of~$T$. This can clearly be done.

However, we do not need all the ends in our set~$X$: all we need is that $X$ contains an end from either side of every edge $e$ of~$T$, so that $e$ defines a bipartition of~$X$ into two non-empty subsets. (These partitions will differ for distinct~$e$, since $T$ branches everywhere.) In the usual topology on~$\Omega$, this requirement can be expressed by saying that $X$ must contain a dense subset of~$\Omega$. Conversely, any such~$X$ suffices to ensure that these bipartitions capture~$\tau$.
\end{EX}

As these separations already distinguish every two elements of~$X$, we shall no longer have to require this explicitly in order to make $h'$ injective. Also, we do not have to require explicitly, in order to make $h'$ surjective, that no consistent orientations of $\vec N$ other than those with a greatest element%
   \COMMENT{}
   be of the form $O_x = {\{\,(A,B)\in\vec N\mid x\in B\,\}}$:%
   \COMMENT{}
   since $(X\sm\{x\},\{x\})\in\vec N$, this separation will lie in~$O_x$ and thus be its greatest element.%
   \COMMENT{}

We have thus shown the following strengthening%
   \COMMENT{}
   of Theorem~\ref{RecoverN} for ever-branching tree sets:

\begin{THM}\label{RecoverNsparse}
Let $\vec N$ be an ever-branching tree set of bipartitions of a set~$X$ such that for every directed consistent orientation $O$ of~$\vec N$ there is a unique%
   \footnote{This condition, which replaces the first premise in Theorem~\ref{RecoverN}, is not a severe restriction: if there is more than one such~$x$ for a given~$O$, delete all but one of them from~$X$. Since $\vec N$ cannot distinguish the elements deleted from this~$x$, this will not affect the representations of other elements of~$\tau$ by~$\vec N$.}
   $x$ in~$X$ such that $O = \{\,(A,B)\in\vec N\mid x\in B\,\}$. Let $X'$ be the set of all those~$x$. Then the conclusion of Theorem~\ref{RecoverN} holds with $f'%
   \COMMENT{}
    \colon\tau\to\vec\N'$ instead of $f\colon\tau\to\vec\N$.%
   \COMMENT{}
\end{THM}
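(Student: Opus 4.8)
The plan is to adapt the proof of Theorem~\ref{RecoverN} almost verbatim, taking care at exactly the two points where its hypotheses were used: injectivity and surjectivity of the relevant bijection. So I first set $O_x := \{(A,B)\in\vec N\mid x\in B\}$ for $x\in X$, and note as before that each $O_x$ is a consistent orientation of~$\vec N$. The new premise guarantees that for each \emph{directed} consistent orientation $O$ there is a unique $x$ with $O=O_x$; the definition of $X'$ collects exactly these $x$. I would then show that $x\mapsto O_x$ restricts to a bijection from $X'$ onto the set $\O'(\vec N)$ of directed consistent orientations of~$\vec N$. Surjectivity is immediate from the premise. For injectivity: if $x,y\in X'$ with $O_x=O_y$, then by definition of $X'$ there is a directed consistent orientation $O$ whose unique representative is~$x$, and also one whose unique representative is~$y$; since $O_x=O_y$ these are the same orientation, so uniqueness forces $x=y$. (Alternatively, I can simply observe that for $x\in X'$ the orientation $O_x$ has greatest element $(X\sm\{x\},\{x\})$, as noted just before the theorem, so $x$ is recoverable from $O_x$.)

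Next I transport this along the isomorphisms. Given the tree set isomorphism $g\colon\vec N\to\tau$, it carries consistent orientations of~$\vec N$ to consistent orientations of~$\tau$ and, being an order isomorphism, directed ones to directed ones; so $g$ restricts to a bijection $\O'(\vec N)\to\O'(\tau)$. Composing, $h'\colon X'\to\O'(\tau)$, $x\mapsto g(O_x)$, is a well-defined bijection. It remains to check that the natural action of~$h'$ on the bipartitions of $X'$ carries $\vec N'$ — meaning the set of bipartitions of $X'$ obtained by restricting the members of $\vec N$ — to $\vec\N'(\tau)$, and that this induced map equals $f'\circ g$. This is the same routine verification as in Theorem~\ref{RecoverN}: for $\vs\in\tau$ with $g(\vt)=\vs$, a point $x\in X'$ lies in the ``$B$-side'' of the restriction of $\vt$ to~$X'$ iff $\vt\in O_x$ iff $\vs = g(\vt)\in g(O_x) = h'(x)$ iff $h'(x)\in\O'\!\!_{\vs}$; so $h'$ maps the restricted bipartition of $\vt$ to $(\O'\!\!_{\sv},\O'\!\!_{\vs}) = f'(\vs) = (f'\circ g)(\vt)$. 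Since $f'\circ g$ is an isomorphism of tree sets by Theorem~\ref{BipartitionsSparse} (using that $\tau$, hence $\vec N$, is ever-branching so that $f'$ is injective), this exhibits the desired canonical isomorphism given~$g$.

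The one genuinely new wrinkle compared with Theorem~\ref{RecoverN} is bookkeeping about $X'$ versus $X$: the bijection is now onto $\O'$ rather than all of~$\O$, and it is $X'$, not $X$, that carries the recovered copy. I should therefore be explicit that restricting each $(A,B)\in\vec N$ to $X'$ still yields a genuine bipartition of $X'$ into two nonempty classes — nonemptiness on both sides follows because $\vec N$ branches everywhere, exactly as in Example~\ref{FiniteTreeLeaves} and Example~\ref{T2} — and that the restriction map $\vec N\to\vec N'$ is itself an isomorphism of separation systems (nestedness is inherited, and distinctness of restrictions for distinct separations follows from ever-branching, again as in Theorem~\ref{BipartitionsSparse}). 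I expect the main obstacle to be precisely this: pinning down in what sense ``$\vec N$'' appears in the conclusion once we pass to $X'\subseteq X$, i.e.\ identifying the right domain ($\vec N'$, the restriction to $X'$) so that ``$h'$ acts on $\vec N$ and yields $f'\circ g$'' is a correct and provable statement. Everything else is a transcription of the earlier argument with $\O,\vec\N,f$ replaced by $\O',\vec\N',f'$ and with ``consistent orientation'' replaced by ``directed consistent orientation'' at each use of the premise.
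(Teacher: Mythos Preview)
Your proposal is correct and follows essentially the same approach as the paper, which derives the theorem from the discussion immediately preceding it by adapting the proof of Theorem~\ref{RecoverN} with $\O',\vec\N',f'$ in place of $\O,\vec\N,f$; your treatment is in fact more explicit about the bookkeeping between $X$ and~$X'$ than the paper's. One small correction: the nonemptiness of $A\cap X'$ and $B\cap X'$ for each $(A,B)\in\vec N$ does not come from ever-branching but from Lemma~\ref{Odash} together with the hypothesis (every $\vs\in\vec N$ lies in some directed $O=O_x$ with $x\in X'$); ever-branching is what you need, via Lemma~\ref{everbranch}, for the injectivity of the restriction map $\vec N\to\vec N'$.
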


When $\tau$ is finite, the sets $X$ in Theorems~\ref{RecoverN} and~\ref{RecoverNsparse} are, in a sense, maximal%
   \footnote{except for duplication of elements $x\in X$ by additional $x'$ that $\vec N$ cannot distinguish from~$x$\looseness=-1}
   and minimal, respectively, for the existence of a tree set $\vec N$ of bipartitions of~$X$ that represents~$\tau$. While in Theorem~\ref{RecoverN} the set $X$ has enough elements $x$ to give every consistent orientation of $\vec N$ the form~$O_x$, this is the case in Theorem~\ref{RecoverNsparse} only for the orientations of~$\vec N$ that have a greatest element, where it cannot be avoided. When $\tau$ is infinite, the set $X$ from Theorem~\ref{RecoverNsparse} need not be maximal, as seen in Example~\ref{T2}.

If desired, however, we can have any mixture of these extremes that we like. Indeed, starting with~$\tau$ we can build~$X$ by assigning to every $O\in\O$ a set~$X_O$ that is either empty or a singleton~$\{x_O\}$, making sure that $X_O\ne\es$ if $O\in\O'$. Then for $X:=\bigcup_{O\in\O} X_O$ a separation $\vs\in\tau$ will be represented by the partition~$(A,B)$ of~$X$ in which $B = \bigcup\{\,X_O\mid \vs\in O\,\}$ and $A = \bigcup\{\,X_O\mid \sv\in O\,\}$.%
   \COMMENT{}
   These ideas will be developed further in the next section.

\section{\boldmath Tree sets from \td s of graphs and matroids}\label{sec:Strees}

In this section we clarify the relationship between finite%
   \footnote{Tree-decompositions of infinite graphs and their tree sets will be treated in~\cite{ProfiniteTreeSets}.}
   \td s, the more general `$S$-trees' introduced in~\cite{TangleTreeAbstract}, and tree sets. Given a \td\ of a finite graph or matroid~$X$, the separations of~$X$ that correspond to the edges of the decomposition tree are always nested. If none of them is trivial or degenerate, i.e., if they form a tree set, then%
   \COMMENT{}
   the \td\ can essentially be recovered from this tree set.%
   \COMMENT{}
   The purpose of this section is to show how.\looseness=-1

The point of doing this is to establish that tree sets, which are more versatile for infinite combinatorial structures (even just for graphs) than \td s, are also no less powerful when they are finite: if desired, we can construct from any finite tree set of separations of a graph or matroid a \td\ whose tree edges correspond to precisely these separations.

Given a graph~$G$ and a family $\V = (V_t)_{t\in T}$ of subsets of its vertex set indexed by the node of a tree~$T$, the pair $(\V,T)$ is called a {\em\td\/} of $G$ if $G$ is the union of the subgraphs~$G[V_t]$ induced by these subsets, and $V_t\cap V_{t''}\sub V_{t'}$ whenever $t'$ lies on the $t$--$t''$ path in~$T$. The {\em adhesion sets\/} $V_{t_1}\cap V_{t_2}$ of~$(\V,T)$ corresponding to the edges $e=t_1 t_2$ of~$T$ then separate the sets $U_1 := \bigcup_{t\in T_1}\! V_t$ from $U_2 := \bigcup_{t\in T_2}\! V_t$ in~$G$, where $T_i$ is the component of $T-e$ containing~$t_i$, for $i=1,2$; see~\cite{DiestelBook16}. These separations $\{U_1,U_2\}$ are the separations of~$G$ {\em associated with\/} $(T,\V)$, and with the edges of~$T$.

Tree-decompositions can be described entirely in terms of $T$ and the oriented separations $\alpha(t_1,t_2) := (U_1,U_2)$ of $G$ associated with its edges. Indeed, we can recover its parts $V_t$ from these separations as the sets $V_t = \bigcap\{\,B\mid(A,B)\in\sigma_t\}$, where $\sigma_t$ is the star of the separations $\alpha(x,t)$ with $x$ adjacent to~$t$ in~$T$. Our aim in this section is to see under what assumptions the \td\ can be recovered not only from this nested set~$\tau$ of separations together with the information of how it relates to~$T$, but from the set~$\tau$ alone.

In an intermediate step, let us use both $T$ and the set of separations corresponding to its edges to view $(\V,T)$ in the following more general set-up from~\cite{TangleTreeAbstract}.
Let $\vS$ be a separation system, and let $\F\sub 2^{\vec S}$.

\begin{DEF} An \emph{${S}$-tree\/} is a pair $(T,\alpha)$ of a tree%
   \COMMENT{}
   $T$ and a map $\alpha\colon\vec E(T)\to \vS$ such that, for every edge $xy$ of~$T$, if $\alpha(x,y)=\vs$ then $\alpha(y,x)=\sv$. An $S$-tree $(T,\alpha)$ is {\em over $\F\sub 2^{\vec S}$} if, in addition, for every node $t$ of~$T$ we have $\alpha(\vec F_t)\in\F$.
\end{DEF}

\noindent
(Recall from Section~\ref{sec:Gtrees} that $\vec F_t$ is the oriented star at~$t$ in~$T$.)

We  say that the set $\alpha(\vec F_t)\sub\vS$ is {\em associated with\/} $t$ in $(T,\alpha)$.
The sets $\F$ we shall consider will all be \emph{standard} for~$\vS$, which means that they contain every co-trivial singleton $\{\rv\}$ in~$\vS$.

Since \td s can be recovered from the $S$-trees they induce, as pointed out earlier, our remaining task is to see which $S$-trees can be recovered just from the set $\alpha(\vec E(T))$ of their separations. As it turns out, this will be possible once we have trimmed a given $S$-tree down to its `essence', which is done in three steps.

An $S$-tree $(T,\alpha)$ is {\em redundant\/} if it has a node $t$ of~$T$ with distinct neighbours $t',t''$ such that $\alpha(t,t') = \alpha(t,t'')$; otherwise it is {\em irredundant\/}. Redundant $S$-trees can be pruned to irredundant ones over the same~$\F$, simply by deleting those `redundant' branches of the tree:

\begin{LEM}\label{prune}
For every finite%
   \COMMENT{}
   $S$-tree $(T,\alpha)$ over some $\F\sub 2^\vS$ there is an irredundant $S$-tree $(T',\alpha')$ over~$\F$ such that $T'\sub T$ and $\alpha' = \alpha\restricts\vec E(T')$.%
   \COMMENT{}
\end{LEM}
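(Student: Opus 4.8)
The plan is to prune redundant branches by a root-directed local move, then argue the process terminates because $T$ is finite. First I would dispose of the trivial case $E(T)=\es$ (take $T'=T$). Otherwise, I~would pick an arbitrary node $r$ of~$T$ as a root, and among all nodes~$t$ witnessing redundancy~-- i.e., having distinct neighbours $t',t''$ with $\alpha(t,t')=\alpha(t,t'')$~-- choose one, and among the redundant pairs of neighbours at~$t$ choose the names $t',t''$ so that $t'$ is the neighbour of~$t$ \emph{towards}~$r$ (i.e.\ on the $t$--$r$ path), if one of the two is; if neither $t'$ nor $t''$ lies towards~$r$ (which happens only when $t=r$), pick them arbitrarily. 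Let $T''$ be the component of $T-tt''$ containing~$t''$, and delete $V(T'')$ from~$T$, together with the incident edge~$tt''$; set $\alpha''=\alpha\restricts\vec E(T-V(T''))$.

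Next I~would check that $(T-V(T''),\alpha'')$ is again an $S$-tree over~$\F$. It is clearly an $S$-tree, since deleting a subtree and the edges into it leaves the required relation $\alpha(x,y)=\vs \Rightarrow \alpha(y,x)=\sv$ intact on the surviving edges. The only node whose oriented star changes is~$t$: its incoming edge from~$t''$ is removed. But $\vec F_t^{\,\mathrm{new}} = \vec F_t\sm\{\alpha(t'',t)\}$, and since $\alpha(t,t')=\alpha(t,t'')$ we have $\alpha(t'',t)=\alpha(t',t)$, so in fact $\alpha(\vec F_t^{\,\mathrm{new}}) = \alpha(\vec F_t)$ as a \emph{set} of separations~-- the element contributed by~$t''$ was already contributed by~$t'$. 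Hence $\alpha(\vec F_t^{\,\mathrm{new}})\in\F$, and the new pair is over~$\F$. (If $t''$ is a leaf, $T''$ is a single vertex and this is even more immediate.)

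The main point is termination: each step strictly decreases $|V(T)|$, which is finite, so after finitely many steps we reach an $S$-tree $(T',\alpha')$ over~$\F$ with $T'\sub T$, $\alpha'=\alpha\restricts\vec E(T')$, and no redundant node~-- i.e.\ irredundant. (Composing the restrictions at each step yields the single restriction $\alpha'=\alpha\restricts\vec E(T')$, since $T'\sub T''\sub\dots\sub T$.) The one thing that needs a line of care, and which I~expect to be the only real obstacle, is making sure the pruning does not \emph{disconnect}~$T$: this is why $T''$ is taken to be the component of $T-tt''$ on the $t''$-side and we delete exactly that component, so what remains, the component of $t$, is a subtree. Using the root~$r$ to direct the choice of which neighbour plays the role of~$t''$ guarantees we always prune `away from~$r$', so $r$ survives and the surviving graph is a (nonempty, connected) subtree~-- hence a tree, as required by the definition of $S$-tree. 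With that in hand the induction on~$|V(T)|$ is clean and the lemma follows.
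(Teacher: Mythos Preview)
Your proof is correct and follows essentially the same approach as the paper: find a redundant node~$t$, delete the branch of~$T$ behind one of the two offending neighbours, observe that $\alpha(\vec F_t)$ is unchanged as a set, and iterate using finiteness. The only difference is that you introduce a root~$r$ to steer which branch to prune, but this is unnecessary: deleting the component of $T-tt''$ containing~$t''$ always leaves the connected subtree containing~$t$ (and~$t'$), regardless of where any root sits, so connectivity and nonemptiness are automatic. (Minor slip: you wrote $\vec F_t^{\,\mathrm{new}} = \vec F_t\sm\{\alpha(t'',t)\}$ where you meant $\vec F_t\sm\{(t'',t)\}$.)
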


\begin{proof}
Let $t\in T$ have neighbours $t',t''$ witnessing the redundance of~$(T,\alpha)$. Deleting from~$T$ the component $C$ of $T-t$ that contains~$t''$ turns $(T,\alpha)$ into an $S$-tree in which $\vec F_t$ has changed but $\alpha(\vec F_t)$ has not, and neither has $\alpha(\vec F_x)$ for any other node $x\in T-C$. So this is still an $S$-tree over~$\F$. As $T$ is finite, we obtain the desired $S$-tree $(T',\alpha')$ by iterating this step.%
   \COMMENT{}
   \end{proof}

An important example of ${S}$-trees are irredundant ${S}$-trees \emph{over stars}: those over some $\F$ all of whose elements are stars of separations. Since stars contain no degenerate separations, the same holds for the image of~$\alpha$ in such $S$-trees~$(T,\alpha)$.

More importantly, in an ${S}$-tree $(T,\alpha)$ over stars the map~$\alpha$ preserves the natural partial ordering on $\vec E(T)$ defined at the start of Section~\ref{sec:Gtrees}:

\begin{LEM}\label{preservele}
Let $(T,\alpha)$ be an irredundant $S$-tree over stars. Let $\ve,\vf\in\vec E(T)$.
\begin{enumerate}[\rm (i)]\itemsep=0pt
   \item If $\ve \le \vf$ then $\alpha(\ve)\le\alpha(\vf)$. In particular, the image of~$\alpha$ in~$\vS$ is nested.%
   \COMMENT{}
    \item If $\alpha(\ve) < \alpha(\vf)$%
   \COMMENT{}
   then $\ve < \vf$, unless either $\alpha(\ve) = \alpha(\fv)$ is small, or $\alpha(\ve)$ or $\alpha(\fv)$ is trivial.%
   \COMMENT{}
   \end{enumerate}
\end{LEM}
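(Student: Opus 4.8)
The plan is to work edge by edge along the tree $T$, reducing both parts to the case where $e$ and $f$ share an endpoint. For part~(i), suppose $\ve\le\vf$. If $\ve=\vf$ there is nothing to prove, so assume $e\ne f$; then the natural ordering says the unique $\{x,y\}$--$\{u,v\}$ path in $T$ (where $\ve=(x,y)$, $\vf=(u,v)$) joins $y$ to $u$. I~would induct on the length $\ell$ of that path. For $\ell=0$ we have $y=u$, so $\ve$ and $\fv=(v,u)$ both point towards the common node $t:=y=u$, i.e.\ $\ve,\fv\in\vec F_t$. Since $(T,\alpha)$ is an $S$-tree over stars, $\alpha(\vec F_t)$ is a star, so $\alpha(\ve)\le\alpha(\fv)^*=\alpha(\vf)$ by the star property --- here I~use that $\alpha(\fv)=\alpha(\vf)^*$ from the definition of $S$-tree. (Irredundance guarantees $\alpha(\ve)\ne\alpha(\fv)$, but actually for part~(i) we only need the star inequality, which holds even for equal elements.) For $\ell>0$, pick the first edge $g$ on the path after $e$; then $\ve\le\vg$ with a shorter path, and $\vg\le\vf$ with a path of length $\ell-1$, so two applications of the inductive hypothesis plus transitivity of $\le$ on $\vS$ give $\alpha(\ve)\le\alpha(\vf)$. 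For the "in particular": given any two edges $e,f$ of $T$, their orientations are comparable in $\vec E(T)$ (edge tree sets are nested, by the discussion before Lemma~\ref{splittingstars}), so by~(i) the corresponding images are comparable in $\vS$; hence the image of $\alpha$ is nested.

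For part~(ii), assume $\alpha(\ve)<\alpha(\vf)$ and that none of the listed exceptions holds; I~want $\ve<\vf$. Since $e$ and $f$ are nested in $T$, their orientations are comparable, so exactly one of four relations holds between $\ve,\vf$; I~go through them. If $\ve<\vf$ we are done. If $\ve=\vf$ then $\alpha(\ve)=\alpha(\vf)$, contradicting $\alpha(\ve)<\alpha(\vf)$. If $\ve>\vf$, i.e.\ $\vf<\ve$, then part~(i) gives $\alpha(\vf)\le\alpha(\ve)$, again contradicting $\alpha(\ve)<\alpha(\vf)$. The remaining case is $\ve\le\fv$ (the orientations point towards each other) --- but we must rule out $\ve=\fv$, $\ve<\fv$ without $\ve\le\vf$ etc.\ carefully. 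If $\ve\le\fv$, then by~(i) $\alpha(\ve)\le\alpha(\fv)=\alpha(\vf)^*$; combined with $\alpha(\ve)<\alpha(\vf)$ this shows $\alpha(\ve)<\alpha(\vf)$ and $\alpha(\ve)\le\alpha(\vf)^*$, so $\alpha(\vf)$ witnesses that $\alpha(\ve)$ is trivial (if $\alpha(\ve)<\alpha(\vf)^*$ strictly) or that $\alpha(\ve)\le\alpha(\vf)=\alpha(\vf)^*$ is small (if $\alpha(\vf)$ is degenerate --- excluded, since images under $\alpha$ in an $S$-tree over stars are nondegenerate) or $\alpha(\ve)=\alpha(\vf)^*$. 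Unwinding: either $\alpha(\ve)$ is trivial, or $\alpha(\ve)=\alpha(\fv)$, and in the latter case $\alpha(\ve)\le\alpha(\vf)=\alpha(\ve)^*$ shows $\alpha(\ve)$ is small --- i.e.\ $\alpha(\ve)=\alpha(\fv)$ is small. So each subcase lands in one of the permitted exceptions, contradiction. Hence $\ve<\vf$.

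The main obstacle I~anticipate is the bookkeeping in part~(ii): the edge relations $\ve\gtrless\vf$, $\ve\gtrless\fv$ do not quite biject with the four abstract relations between separations, because in $\vec E(T)$ two edges that are "incident but pointing towards each other" still only have one comparability ($\ve\le\fv$ need not imply $\ve\le\vf$), so I~need to be precise about which of the edge-level comparabilities can occur and match each against the hypothesis $\alpha(\ve)<\alpha(\vf)$, teasing out exactly the three exceptions listed. The other delicate point is making sure irredundance is actually used --- it enters precisely to exclude $\alpha(\ve)=\alpha(\vf)$ when $e,f$ are both at a common node $t$ with $\ve,\vf\in\vec F_t$ pointing the "wrong" way, which is what forces the honest strict inequality $\ve<\vf$ rather than merely $\ve\le\vf$ at the base case. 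Everything else is routine use of transitivity and the star property.
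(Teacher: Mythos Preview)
Your plan for part~(i) is essentially the paper's proof, but your parenthetical remark is wrong: the star property only gives $\vr\le\sv$ for \emph{distinct} $\vr,\vs\in\sigma$, so if $\alpha(\ve)=\alpha(\fv)$ you get nothing. Irredundance is genuinely needed here to ensure $\alpha(\ve)\ne\alpha(\fv)$ and hence that the star property applies. You do invoke irredundance first, so the argument survives, but the aside should be deleted.

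Part~(ii) has a real gap: your case analysis is incomplete. For distinct edges $e,f$ in the edge tree set of~$T$, exactly one of the four strict relations $\ve<\vf$, $\vf<\ve$, $\ve<\fv$, $\fv<\ve$ holds; you handle the first three (the second under ``$\ve>\vf$'', the third under ``$\ve\le\fv$'') but entirely omit the fourth, $\fv<\ve$ (equivalently $\ev<\vf$), where the orientations point \emph{away} from each other. Your list ``$\ve<\vf$, $\ve=\vf$, $\ve>\vf$, $\ve\le\fv$'' counts the vacuous equality case but drops this one. And this missing case is exactly where the exception ``$\alpha(\fv)$ is trivial'' arises: from $\fv\le\ve$ part~(i) gives $\alpha(\fv)\le\alpha(\ve)$, while the hypothesis gives $\alpha(\fv)<\alpha(\ev)$; if the first inequality is strict then $\alpha(\ve)$ witnesses the triviality of~$\alpha(\fv)$, and otherwise $\alpha(\fv)=\alpha(\ve)<\alpha(\ev)$ is small. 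Without this case you never reach the ``$\alpha(\fv)$ trivial'' clause of the lemma, so your argument cannot be complete.
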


\begin{proof}
   (i) Assume first that $e$ and~$f$ are adjacent; then $\ve,\fv\in\vec F_t$ for some $t\in T$. As $(T,\alpha)$ is irredundant we have $\alpha(\ve)\ne\alpha(\fv)$, and hence $\alpha(\ve)\le\alpha(\vf)$ since $\alpha(\vec F_t)$ is a star. By induction on the length of the $e$--$f$ path in $T$ this implies~(i) also for nonadjacent $e$ and~$f$.

(ii) Suppose $\ve\not<\vf$. Since $e$ and $f$ are nested, we then have
 $$\ve \ge \vf\text{ \ or \ }\ve\ge\fv\text{ \ or \ }\ve\le\fv.$$
 If $\ve\le\fv$, we have $\alpha(\ve)\le\alpha(\fv)$ by~(i), while $\alpha(\fv) < \alpha(\ev)$ by assumption%
   \COMMENT{}
   and~\eqref{invcomp} (and the fact that $\alpha$ commutes with inversion).%
   \COMMENT{}
   If even $\alpha(\ve) < \alpha(\fv)$, then $\alpha(\ve)$ is trivial.%
   \COMMENT{}
   Otherwise, $\alpha(\ve) = \alpha(\fv) < \alpha(\ev)$ is small.%
   \COMMENT{}

   Suppose next that $\ve\ge\fv$. Then $\alpha(\fv)\le \alpha(\ve)$ by~(i), while $\alpha(\fv) < \alpha(\ev)$ by assumption. If even $\alpha(\fv) < \alpha(\ve)$ then $\alpha(\fv)$ is trivial. Otherwise, $\alpha(\ve) = \alpha(\fv) < \alpha(\ev)$ is small.%
   \COMMENT{}

   Suppose finally that $\ve\ge\vf$. Then $\alpha(\ve) < \alpha(\vf)\le \alpha(\ve)$ by assumption and~(i), a contradiction.%
   \COMMENT{}
 \end{proof}

By Lemma~\ref{preservele}\,(i), the separations in an irredundant $S$-tree over stars are nested. For redundant $S$-trees this need not be so: if $\alpha(\ve)=\alpha(\vf)$ for ${\ve,\vf\in \vec F(t)}$, then separations $\alpha(\vedash)$ with $\vedash < \ve$ may cross separations $\alpha(\vfdash)$ with $\vfdash < \vf$. This is because we defined stars of separations as sets, not as multisets: for $\ve$ and $\vf$ as above we do not require that $\alpha(\ve)\le\alpha(\fv)$ when we ask that $\alpha(\vec F_t)$ be a star, since $\alpha(\ve)=\alpha(\vf)$ are not distinct elements of~$\alpha(\vec F_t)$.

Lemma~\ref{preservele}\,(ii) is best possible in that all the cases mentioned can occur independently.%
   \COMMENT{}
   We also need the inequalities to be strict, unless we assume that the $S$-tree is tight (see below).

\medbreak

Two edges of an irredundant $S$-tree over stars cannot have orientations pointing towards each other that map to the same separation, unless this is trivial:

\begin{LEM}\label{NewLemma}
Let $(T,\alpha)$ be an irredundant $S$-tree over a set $\F$ of stars. Let $e,f$ be distinct edges of~$T$ with orientations $\ve < \fv$ such that $\alpha(\ve) = \alpha(\vf) =:\vr$. Then $\vr$ is trivial.
\end{LEM}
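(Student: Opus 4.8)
The plan is to walk along the unique path of $T$ joining $e$ and~$f$, record the $\alpha$-images of the edges met, and find a place where these images increase strictly; the separation there will witness the triviality of~$\vr$.

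Concretely, since $\ve<\fv$ I would first list the edges $a_0=e,a_1,\dots,a_n=f$ lying on the path of $T$ from $e$ to~$f$, in their natural order (so $n\ge1$ as $e\ne f$), oriented consistently along the path as $\vec a_0,\dots,\vec a_n$, so that $\ve=\vec a_0\le\vec a_1\le\dots\le\vec a_n=\fv$ in the natural ordering on~$\vec E(T)$; note that consecutive edges $a_i,a_{i+1}$ meet in a vertex $w_i$ with $\vec a_i$ and $\vec a_{i+1}^*$ both lying in~$\vec F_{w_i}$, and distinct there since $a_i\ne a_{i+1}$. Recall that irredundance of $(T,\alpha)$ is equivalent to $\alpha$ being injective on each~$\vec F_t$. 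Applying Lemma~\ref{preservele}(i) repeatedly yields $\vr=\alpha(\vec a_0)\le\alpha(\vec a_1)\le\dots\le\alpha(\vec a_n)=\alpha(\fv)=\rv$. Since the image of~$\alpha$ contains no degenerate separations, $\vr\ne\rv$, so this non-decreasing sequence is non-constant: there is an index~$i$ with $\alpha(\vec a_i)<\alpha(\vec a_{i+1})$.

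I would then split into cases according to whether the inequalities $\vr\le\alpha(\vec a_i)$ and $\alpha(\vec a_{i+1})\le\rv$ are strict. If $\vr<\alpha(\vec a_i)$, then $\alpha(\vec a_i)<\alpha(\vec a_{i+1})\le\rv$ gives $\alpha(\vec a_i)<\rv$, hence $\vr<\alpha(\vec a_i)^*$ by~\eqref{invcomp}; so the (nondegenerate) separation of which $\alpha(\vec a_i)$ is an orientation witnesses the triviality of~$\vr$. Symmetrically, if $\alpha(\vec a_{i+1})<\rv$, then $\vr\le\alpha(\vec a_i)<\alpha(\vec a_{i+1})$ gives $\vr<\alpha(\vec a_{i+1})$ while $\alpha(\vec a_{i+1})<\rv$ gives $\vr<\alpha(\vec a_{i+1})^*$, and again $\vr$ is trivial. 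The remaining possibility is $\vr=\alpha(\vec a_i)$ and $\alpha(\vec a_{i+1})=\rv$ simultaneously; but then $\alpha(\vec a_i)=\vr=\rv^*=\alpha(\vec a_{i+1})^*=\alpha(\vec a_{i+1}^*)$, contradicting the injectivity of~$\alpha$ on~$\vec F_{w_i}$ (the distinct elements $\vec a_i,\vec a_{i+1}^*$ of~$\vec F_{w_i}$ would have equal images). Hence this case cannot occur, and the proof is complete.

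The argument is short; I expect the only delicate point to be the bookkeeping with strict versus non-strict inequalities — one has to check that whichever of $\vr\le\alpha(\vec a_i)$ and $\alpha(\vec a_{i+1})\le\rv$ happens to be strict, the strictness can be carried to the opposite orientation through~\eqref{invcomp}, and that the single `flat' case $\vr=\alpha(\vec a_i)$, $\rv=\alpha(\vec a_{i+1})$ is exactly the one ruled out by irredundance at~$w_i$. Everything else is routine: the structure of the $e$--$f$ path in~$T$ and one repeated application of Lemma~\ref{preservele}(i).
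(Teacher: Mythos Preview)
Your proof is correct and follows essentially the same approach as the paper's: both walk along the $e$--$f$ path, use Lemma~\ref{preservele}\,(i) to obtain the chain $\vr=\alpha(\vec a_0)\le\dots\le\alpha(\vec a_n)=\rv$, and then extract a separation $s\ne r$ on this chain that witnesses the triviality of~$\vr$, with irredundance ruling out the degenerate case where every value is $\vr$ or~$\rv$. The only difference is organisational: the paper first argues globally (if every $\alpha(\vec a_j)\in\{\vr,\rv\}$ then some node has two incoming edges mapped to~$\vr$, contradicting irredundance) and then picks any $\vs\notin\{\vr,\rv\}$ as witness, whereas you localise to a single strict step $\alpha(\vec a_i)<\alpha(\vec a_{i+1})$ and do a three-way case split there; your Case~3 is exactly the paper's irredundance argument, and your Cases~1 and~2 together recover the paper's witness.
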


\begin{proof}
If $\alpha$ maps all $\vedash$ with $\ve < \vedash < \fv$ to $\vr$ or to~$\rv$, then the $e$--$f$ path in $T$ has a node with two incoming edges mapped to~$\vr$. This contradicts our assumption that $(T,\alpha)$ is irredundant. Hence there exists such an edge~$\vedash$ with $\alpha(\vedash) = \vs$ for some $s\ne r$. Lemma~\ref{preservele} implies that $\vr = \alpha(\ve) \le \alpha(\vedash) \le\alpha(\fv) = \rv$, so $\vr\le\vs$ as well as $\vr \le \sv$ by~\eqref{invcomp}. As $s\ne r$ these inequalities are strict, so $s$ witnesses that $\vr$ is trivial.
\end{proof}

Let us call an $S$-tree $(T,\alpha)$  {\em tight\/} if its sets $\alpha(\vec F_t)$ are antisymmetric.%
   \COMMENT{}
   The name `tight' reflects the fact that from any $S$-tree we can obtain a tight one over the same~$\F$ by contracting edges:

\begin{LEM}\label{tight}
For every finite%
   \COMMENT{}
   $S$-tree $(T,\alpha)$ over some $\F\sub 2^\vS$ there exists an irredundant and tight $S$-tree $(T',\alpha')$ over~$\F$ such that $T'$ is a minor of $T$ and $\alpha' = \alpha\restricts\vec E(T')$.
\end{LEM}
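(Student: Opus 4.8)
The plan is to mimic the proof of Lemma~\ref{prune}, but with edge contractions in place of branch deletions, and to run the two reductions so that tightness is achieved without destroying irredundance. First I would observe that if $(T,\alpha)$ is not already tight, then some node $t$ has two edges at it, say with orientations $\ve,\vedash\in\vec F_t$, whose images are inverse to each other: $\alpha(\ve)=\alpha(\vedash)^*$. (Since $\alpha(\vec F_t)$ is a star, two of its distinct elements $\vr=\alpha(\ve)$ and $\vs=\alpha(\vedash)$ with $\vr=\sv$ would force $\vr\le\sv=\vr$, hence $\vr=\vr^*$ is degenerate, contradicting that $\F$ consists of stars; so in fact non-antisymmetry of $\alpha(\vec F_t)$ can only come from $\alpha$ identifying two elements of $\vec F_t$ — i.e.\ from redundancy — unless\ldots) Here I need to be slightly careful: the right reading is that a failure of tightness at $t$ means either $(T,\alpha)$ is redundant at $t$, or there are distinct edges $e,e'$ at $t$ with $\alpha(\ve)=\overline{\alpha(\vedash)}$, but the star condition makes the latter impossible once $e\ne e'$. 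So the only obstruction to tightness, given irredundance, is\ldots\ in fact there is none for a single node in isolation; the subtlety is that contracting an edge to fix redundance elsewhere can create non-antisymmetry, and vice versa.

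So the cleaner approach: first apply Lemma~\ref{prune} to obtain an irredundant $(T_1,\alpha_1)$ over $\F$ with $T_1\sub T$. Now suppose $(T_1,\alpha_1)$ is not tight, witnessed at a node $t$ by distinct edges $e=tt'$ and $f=tt''$ with $\alpha_1(\ve)=\overline{\alpha_1(\vf)}$; by the star remark this can only happen when $e$ and $f$ are such that $t$ sees one of them `forwards' and the other `backwards', i.e.\ $\ve$ and $\vf$ both lie in $\vec F_t$ and are inverse separations — which, re-examined, forces a degenerate image and is excluded. Therefore the honest statement of what goes wrong is: $(T_1,\alpha_1)$ tight means every $\alpha_1(\vec F_t)$ is antisymmetric, and the failure is that some $\alpha_1(\vec F_t)$ contains an oriented separation together with its inverse, which — since both lie in the star — is impossible unless they are equal, i.e.\ degenerate, which $\F$ forbids. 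I expect the actual author proof to handle this by contracting an edge $e=t_1t_2$ whenever $\alpha(\vec F_{t_1})$ or $\alpha(\vec F_{t_2})$ fails to be antisymmetric *because of* $\alpha(\ve)$ and $\alpha(\ev)$ appearing in a common star along the path — i.e.\ the genuine case is two *different* edges at $t$ mapping to inverse separations, which I was too hasty to exclude: if $\ve\in\vec F_t$ and $\vf\in\vec F_t$ with $\alpha(\ve)=\sv$ and $\alpha(\vf)=\vs$ then the star condition gives $\alpha(\ve)\le\alpha(\vf)^*=\sv=\alpha(\ve)$, consistent, and also $\alpha(\vf)\le\alpha(\ve)^*=\vs=\alpha(\vf)$ — no contradiction, these can coexist. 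Good; so that is precisely the case to contract.

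Thus the key step I would carry out: while $(T,\alpha)$ is not tight, pick a node $t$ and distinct edges $e,f$ at $t$ with $\alpha(\ve)=\overline{\alpha(\vf)}$, and contract the edge $e$ (or $f$), identifying $t$ with $t'$. One checks that after contracting $e=tt'$, the new star at the merged node is $(\vec F_t\sm\{\ve\})\cup(\vec F_{t'}\sm\{\ev\})$ reoriented appropriately, and $\alpha$ of it equals $(\alpha(\vec F_t)\sm\{\alpha(\ve)\})\cup(\alpha(\vec F_{t'})\sm\{\alpha(\ev)\})$ — because the two deleted separations were inverse, and every other element of $\alpha(\vec F_{t'})$ is $\le \alpha(\ev)=\overline{\alpha(\ve)}$, which by the star property at $t$ is $\ge$ everything in $\alpha(\vec F_t)\sm\{\alpha(\ve)\}$, so the union is again a star, and it lies in $\F$ provided $\F$ is closed under this operation — which is where \emph{standardness} of $\F$ is used, exactly as co-trivial singletons are the degenerate boundary case. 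All other stars $\alpha(\vec F_x)$ are unchanged, so $(T,\alpha)$ stays an $S$-tree over $\F$, and it stays irredundant (a redundancy created by the contraction would mean two parallel edges at the merged node mapping to the same separation, but those edges came from $\vec F_t\sm\{\ve\}$ and $\vec F_{t'}\sm\{\ev\}$ and map to things $\ge$ resp.\ $\le$ a fixed separation, so equality would again force triviality, contradicting that the original $S$-tree, after pruning, has no such coincidence — I would spell this out). Since $T$ is finite, each contraction strictly decreases $|E(T)|$, so the process terminates, yielding the desired irredundant and tight $(T',\alpha')$ over $\F$ with $T'$ a minor of $T$ and $\alpha'=\alpha\restricts\vec E(T')$.

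The main obstacle, as the hesitation above shows, is pinning down exactly when antisymmetry fails and verifying that contracting such an edge keeps the associated set a star lying in $\F$: one must use both that $\F$ is standard (to absorb the degenerate/co-trivial edge cases) and that along any path the star conditions chain up, so that deleting an inverse pair from the two adjacent stars and merging them produces a genuine star. Everything else — termination by finiteness, preservation of irredundance, $\alpha'=\alpha\restricts\vec E(T')$ — is bookkeeping that I would state but not belabour.
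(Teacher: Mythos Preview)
Your contraction step has a genuine gap: after contracting $e=tt'$ and merging $t$ with~$t'$, the set $\alpha$ assigns to the new node is (roughly) $(\alpha(\vec F_t)\sm\{\alpha(\ve)\})\cup(\alpha(\vec F_{t'})\sm\{\alpha(\ev)\})$, and there is no reason for this to lie in~$\F$. The lemma is stated for an \emph{arbitrary} $\F\sub 2^{\vS}$: it does not assume that $\F$ consists of stars, let alone that $\F$ is standard, and standardness in any case only says that $\F$ contains the co-trivial singletons~--- it gives no closure of~$\F$ under merging two of its elements along an inverse pair. Even in the friendliest case where $\F$ is the set of \emph{all} stars in~$\vS$, your argument that the merged set is a star tacitly uses that the two original sets were stars, which again is not given. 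So the step ``the union \ldots\ lies in~$\F$'' fails, and with it the induction.

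The paper avoids this problem by a different, simpler move. Having found $t$ with neighbours $t',t''$ such that $\alpha(t',t)=\alpha(t,t'')=:\vs$ (equivalently, $\alpha(t'',t)=\sv$, so these two incoming edges witness the failure of antisymmetry at~$t$), it does \emph{not} merge two stars. Instead it deletes the entire component of $T-t't-tt''$ containing~$t$ (that is, $t$~together with all branches at~$t$ other than the two towards $t'$ and~$t''$) and replaces the path $t'\,t\,t''$ by a single edge $t't''$ with $\alpha'(t',t''):=\vs$. From the viewpoint of~$t'$ and of~$t''$ nothing has changed: each still sees one edge labelled exactly as before, so $\alpha'(\vec F_x)=\alpha(\vec F_x)$ for \emph{every} surviving node~$x$. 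Hence the new $S$-tree is automatically still over~$\F$ and still irredundant, with no appeal to any property of~$\F$ whatsoever. Your preservation-of-irredundance argument and your termination-by-finiteness are fine; what you are missing is precisely this deletion of the side branches at~$t$, which is what keeps the surviving stars unchanged and makes membership in~$\F$ a non-issue.
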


\begin{proof}
By Lemma~\ref{prune} we may assume that $(T,\alpha)$ is irredundant. Consider any node $t$ of~$T$ for which $\alpha(\vec F_t)$ is not antisymmetric. Then $t$ has distinct neighbours $t',t''$ such that $\alpha(t',t) = \alpha(t,t'') =: \vs$.%
    \COMMENT{}
    Let $T'$ be obtained from~$T$ by contracting one of these edges and any branches of~$T$ attached to~$t$ by edges other than these two.%
   \footnote{In other words: delete the component of $T-t't - tt''$ containing~$t$, and join $t'$ to~$t''$. Then think of the edge $t't''$ as the old edge $t't$, so that $E(T')\sub E(T)$ as desired.}
   Let $\alpha'(t',t'') := \vs$ and $\alpha'(t'',t') := \sv$, and otherwise let $\alpha':= \alpha\!\restriction\! \vec E(T')$. Then $(T',\alpha')$ is again an $S$-tree, whose sets $\vec F_t$ in $T'$ are the same as they were in~$T$, for every $t'\in T'$. In particular, $(T',\alpha')$ is still irredundant and an $S$-tree over~$\F$. Iterate this step until the $S$-tree is tight.
   \end{proof}

Let us call an $S$-tree $(T,\alpha)$ {\em essential\/} if it is irredundant, tight, and $\alpha(\vec E(T))$ contains no trivial separation. Let the {\em essential core\/} of a set $\F\sub 2^\vS$ be the set 
of all $F'\sub\vS$ obtained from some $F\in\F$ by deleting all its trivial elements.%
   \COMMENT{}
   And call $\F$ {\em essential\/} if it equals its essential core.

An $S$-tree over stars can be made essential by first pruning it to make it irredundant (Lemma~\ref{prune}), then contracting the pruned tree to make it tight (Lemma~\ref{tight}), and finally deleting any edges mapping to trivial separations:

\begin{LEM}\label{essentialStrees}
For every irredundant and tight finite%
   \COMMENT{}
   $S$-tree $(T,\alpha)$ over a set~$\F$ of stars there is an essential $S$-tree $(T',\alpha')$ over the essential core of~$\F$ such that $T'\sub T$ and $\alpha' = \alpha\restricts\vec E(T')$.
\end{LEM}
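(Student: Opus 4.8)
The plan is to delete from $T$ all edges whose $\alpha$-image is trivial, keeping the component that still carries all the nontrivial separations, and then check that what remains is an essential $S$-tree over the essential core of~$\F$. First I would observe that, since $(T,\alpha)$ is tight, by Lemma~\ref{NewLemma} no two distinct edges of~$T$ with orientations pointing towards each other can map to the same separation unless that separation is trivial; together with Lemma~\ref{preservele}\,(i) this tells us that the trivial edges of~$T$ (those $e$ with $\alpha(\ve)$ or $\alpha(\ev)$ trivial) occupy a well-controlled position. More precisely, if $\ve$ is trivial then $\alpha(\ve) < \alpha(\vs) < \alpha(\ev)$ for some edge~$s$ witnessing the triviality, so by Lemma~\ref{preservele}\,(ii) the edges mapping to trivial separations all lie `below' some genuine edge; in a finite tree this forces the set of non-trivial edges to induce a subtree.

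The key steps, in order: (1)~Define $T'$ to be the subgraph of~$T$ obtained by deleting every edge $e$ of~$T$ such that $\alpha(\ve)$ or $\alpha(\ev)$ is trivial, and then passing to the unique component of what remains that contains all the non-trivial edges. I would verify this component is well defined by showing, using Lemma~\ref{preservele}\,(ii) as above, that if $e$ is a trivial edge then deleting it separates $T$ into a piece all of whose edges are also trivial and a piece containing every non-trivial edge; iterating, all trivial edges are confined to `pendant' subtrees hanging off the non-trivial core, so that core is connected. Set $\alpha' := \alpha\restricts\vec E(T')$. (2)~Check $(T',\alpha')$ is an $S$-tree: the compatibility condition $\alpha'(x,y)=\vs \Rightarrow \alpha'(y,x)=\sv$ is inherited from~$\alpha$. (3)~Check it is over the essential core of~$\F$: for each node $t\in T'$, its star $\vec F_t$ in~$T'$ is obtained from its star $\vec F_t$ in~$T$ by removing exactly the edges at~$t$ leading into deleted (trivial) branches, hence $\alpha'(\vec F_t)$ is obtained from $\alpha(\vec F_t)\in\F$ by deleting some of its trivial elements. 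I must make sure we delete \emph{all} trivial elements, not just some, to land in the essential core as defined — but if some $\vs\in\alpha'(\vec F_t)$ were trivial, the corresponding edge at~$t$ would be a trivial edge of~$T'$, contradicting the construction; so in fact $\alpha'(\vec F_t)$ is the essential core of $\alpha(\vec F_t)$, which lies in the essential core of~$\F$. (4)~Check $(T',\alpha')$ is essential: it is irredundant because it is a subgraph of the irredundant~$(T,\alpha)$ and $\alpha'$ is a restriction; it is tight because each $\alpha'(\vec F_t)$ is a subset of the antisymmetric set $\alpha(\vec F_t)$; and $\alpha'(\vec E(T'))$ contains no trivial separation by the very choice of which edges we deleted.

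The main obstacle I anticipate is step~(1): showing that after deleting the trivial edges the non-trivial ones still form a connected subtree, i.e.\ that a trivial edge can never `separate' two non-trivial edges in~$T$. The heart of this is the following claim: if $e$ is an edge with $\alpha(\ve)$ trivial in~$\vS$, and $f$ is any edge on the $e$-side away from the direction in which $\ve$ points (formally $\ev \le \vf$ or similar), then $\alpha(\vf)$ is also trivial. This should follow from Lemma~\ref{preservele}\,(i) (giving $\alpha(\ev)\le\alpha(\vf)$, so $\alpha(\vf)$ is trivial since anything below a trivial separation is trivial — wait, one needs the right direction; the correct statement is that $\alpha(\ev)$ is \emph{co-trivial}, so $\alpha(\vf)\ge\alpha(\ev)$ need not be trivial, and I must instead argue with the orientation pointing the other way, using that $\vf \le \ve$ forces $\alpha(\vf)\le\alpha(\ve)$, hence $\alpha(\vf)$ trivial). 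So the careful bookkeeping of orientations here — making sure "the side towards which a trivial separation points" is the side where all edges are forced trivial — is where the real work lies; once that directional lemma is pinned down, connectedness of the non-trivial core and hence the whole construction follows routinely from finiteness of~$T$.
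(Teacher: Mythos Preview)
Your approach is essentially the paper's: delete the edges mapping to trivial separations and verify that what remains is a subtree. The paper's execution, however, is considerably more direct. It simply observes that since anything $\le$ a trivial separation is trivial, and since $\alpha$ preserves the order by Lemma~\ref{preservele}\,(i), the set $\vec F$ of oriented edges $\ve$ with $\alpha(\ve)$ trivial is down-closed in~$\vec E(T)$. Deleting each such edge together with the \emph{initial} vertex of its trivial orientation then yields a connected subtree in one stroke, with no need to argue separately about components or to invoke Lemma~\ref{preservele}\,(ii) or Lemma~\ref{NewLemma}. Your detour through Lemma~\ref{NewLemma} at the start is unnecessary, and your ``main obstacle'' paragraph is exactly this down-closedness observation, which you do eventually reach (``$\vf\le\ve$ forces $\alpha(\vf)\le\alpha(\ve)$, hence $\alpha(\vf)$ trivial''); lead with it instead. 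One small gap: your construction ``pass to the unique component containing all the non-trivial edges'' is undefined when every edge of~$T$ is trivial; the paper's formulation (delete the edge and its initial vertex) handles this automatically, yielding a single node.
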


\begin{proof}
Recall that if $\vs\in\vS$ is trivial then so is every $\vr\le\vs$. By Lemma~\ref{preservele}, therefore, the set~$\vec F$ of all edges $\ve\in\vec E(T)$ such that $\alpha(\ve)$ is trivial is closed down in~$\vec E(T)$. Hence the subgraph $T'$ of $T$ obtained by deleting each of these edges~$e$ together with the initial vertex of $\ve$ is connected, and therefore a tree: it may be edgeless, but it will not be empty, since the target vertex of any maximal edge in~$\vec F$ will be in~$T'$.%
   \COMMENT{}
   Clearly, $(T',\alpha')$ has all the properties claimed.
\end{proof}

Combining Lemmas \ref{tight} and~\ref{essentialStrees}, we obtain

\begin{COR}
For every finite $S$-tree $(T,\alpha)$ over a set~$\F$ of stars there is an essential $S$-tree $(T',\alpha')$ over the essential core of~$\F$%
   \COMMENT{}
   such that $T'$ is a minor of~$T$ and $\alpha' = \alpha\restricts\vec E(T')$.\qed
\end{COR}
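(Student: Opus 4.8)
The plan is simply to chain Lemmas~\ref{tight} and~\ref{essentialStrees}. First I would apply Lemma~\ref{tight} to the given finite $S$-tree $(T,\alpha)$ over~$\F$: since $\F$ is in particular a subset of~$2^\vS$, this yields an irredundant and tight $S$-tree $(T_1,\alpha_1)$ over the \emph{same} family~$\F$, with $T_1$ a minor of~$T$ and $\alpha_1 = \alpha\restricts\vec E(T_1)$. The point to notice here is that, because $\F$ is a set of stars and Lemma~\ref{tight} keeps the $S$-tree over that same~$\F$, the intermediate object $(T_1,\alpha_1)$ is again an irredundant and tight $S$-tree over a set of stars, which is exactly the hypothesis required for the next step. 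Moreover $T_1$, being a minor of the finite tree~$T$, is finite.

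Next I would apply Lemma~\ref{essentialStrees} to $(T_1,\alpha_1)$. This produces an essential $S$-tree $(T',\alpha')$ over the essential core of~$\F$ with $T'\sub T_1$ and $\alpha' = \alpha_1\restricts\vec E(T')$.

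Finally I would assemble the conclusion. Since $T'\sub T_1$ and $\sub$ is a special case of the minor relation, while $T_1$ is a minor of~$T$, transitivity of the minor relation gives that $T'$ is a minor of~$T$. For the maps: as $\vec E(T')\sub\vec E(T_1)$, restricting in two stages is the same as restricting once, so $\alpha' = \alpha_1\restricts\vec E(T') = \bigl(\alpha\restricts\vec E(T_1)\bigr)\restricts\vec E(T') = \alpha\restricts\vec E(T')$. Together with the fact, just obtained, that $(T',\alpha')$ is an essential $S$-tree over the essential core of~$\F$, this is precisely the assertion of the Corollary.

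I do not expect a genuine obstacle here; the only things to be careful about are that the output of Lemma~\ref{tight} is still an $S$-tree over a set of \emph{stars} (so that Lemma~\ref{essentialStrees} applies at all), which holds because that lemma preserves~$\F$, and the routine bookkeeping that a subgraph of a minor is again a minor and that iterated restriction of~$\alpha$ collapses. Both are immediate.
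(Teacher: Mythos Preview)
Your proposal is correct and is exactly the paper's approach: the corollary is stated immediately after Lemmas~\ref{tight} and~\ref{essentialStrees} with no proof beyond the remark that it follows by combining them, and your write-up simply spells out that combination (plus the routine checks that the intermediate tree is still over stars, that a subgraph of a minor is a minor, and that restrictions compose).
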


\begin{LEM}\label{injective}
For every essential $S$-tree~$(T,\alpha)$ over stars the map $\alpha$~is injective.
\end{LEM}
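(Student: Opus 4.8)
The plan is to show that $\alpha$ is injective by ruling out that two distinct edges of~$T$ can carry the same separation. Suppose for contradiction that $e\ne f$ are edges of~$T$ with $\alpha(\ve) = \alpha(\vf)$ for some orientations $\ve,\vf$. (If instead $\alpha(\ve) = \alpha(\fv)$, replace $f$ by its reversal; since $\alpha$ commutes with inversion this reduces to the first case, possibly after also reversing~$e$.) Consider the $e$--$f$ path $P$ in~$T$, and orient its edges consistently, say all pointing from the $e$-end towards the $f$-end. Along this oriented path the edges are linearly ordered by~$\le$, and the orientations~$\ve$ and~$\vf$ are comparable: one of the four relations among $\{\ve,\ev\}$ and $\{\vf,\fv\}$ holds. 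Because $e\ne f$ lie on a common path, the relevant comparison is strict, so after possibly renaming we have $\ve < \fv$ or $\ve > \fv$ or $\ve < \vf$ or $\ev < \vf$.

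First I would dispose of the `pointing-towards-each-other' configuration. If $\ve < \fv$ (with $\alpha(\ve) = \alpha(\vf) = \vr$, say), then Lemma~\ref{NewLemma} applies directly and tells us that $\vr$ is trivial --- contradicting the hypothesis that $(T,\alpha)$ is essential, hence that $\alpha(\vec E(T))$ contains no trivial separation. The symmetric case $\ev < \vf$ is the same statement read with the roles of the two orientations swapped (apply Lemma~\ref{NewLemma} to $f$ and $e$). So it remains to handle the case where the two orientations are `parallel', i.e.\ $\ve < \vf$ with $\alpha(\ve) = \alpha(\vf)$. Here I would walk along the $e$--$f$ path and look at whether every intermediate edge also maps to $\alpha(\ve)$ or its inverse: if so, some node of the path has two incoming edges with the same image, contradicting irredundance; if not, there is an intermediate edge $\edash$ with $\ve < \vedash < \vf$ and $\alpha(\vedash) = \vs$ for some $s$ with $\alpha(\ve)\notin\{\vs,\sv\}$. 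Then Lemma~\ref{preservele}(i) gives $\alpha(\ve)\le\alpha(\vedash)\le\alpha(\vf) = \alpha(\ve)$, so $\alpha(\vedash) = \alpha(\ve)$ after all --- but $\alpha(\vedash) = \vs$ is not in $\{\alpha(\ve),\overline{\alpha(\ve)}\}$, a contradiction. (Alternatively, antisymmetry of $\alpha(\vec F_t)$ at the node where $\edash$ meets a neighbour on the path gives a contradiction directly from tightness.)

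Assembling these cases: whichever of the four comparisons holds between the orientations of $e$ and $f$, we reach a contradiction with one of the three defining properties of an essential $S$-tree (irredundant, tight, no trivial separations in the image). Hence no two distinct edges of~$T$ share an image under~$\alpha$, which is exactly the injectivity of~$\alpha$. I expect the main obstacle to be organising the four orientation configurations cleanly --- in particular making sure the reduction of the `$\alpha(\ve) = \alpha(\fv)$' case to the `$\alpha(\ve) = \alpha(\vf)$' case via inversion is stated once and reused --- and being careful that the comparability of $\ve$ with an orientation of $f$ is \emph{strict}, which is where the hypothesis $e\ne f$ together with the tree structure of~$T$ enters. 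The rest is a direct appeal to Lemmas~\ref{preservele} and~\ref{NewLemma}.
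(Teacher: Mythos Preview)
Your approach is the same as the paper's: split into the parallel case $\ve < \vf$ and the pointing-towards-each-other case $\ve < \fv$, and use Lemma~\ref{preservele} for the first and Lemma~\ref{NewLemma} for the second. But there are two gaps.

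First, you assume $e\ne f$ from the outset. Injectivity of $\alpha$ is on $\vec E(T)$, so you must also rule out $\alpha(\ve)=\alpha(\ev)$ for a single edge~$e$. This is easy (it would make $\alpha(\ve)$ degenerate, and stars contain no degenerate separations), but it needs saying; the paper does so first.

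Second, and more substantially, your Subcase~A in the parallel case is wrong as stated. You claim that if every intermediate edge maps to $\vs$ or~$\sv$ then ``some node of the path has two incoming edges with the same image, contradicting irredundance''. But your own Subcase~B argument (equivalently, Lemma~\ref{preservele}(i) directly) forces every $\vedash$ with $\ve\le\vedash\le\vf$ to satisfy $\vs=\alpha(\ve)\le\alpha(\vedash)\le\alpha(\vf)=\vs$, so $\alpha(\vedash)=\vs$ for \emph{all} of them --- never~$\sv$. At the terminal node~$t$ of~$\ve$, then, the two path edges in~$\vec F_t$ map to $\vs$ and~$\sv$ respectively: \emph{different} images, so irredundance is not violated. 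What is violated is tightness, since $\alpha(\vec F_t)$ contains both $\vs$ and~$\sv$. Your parenthetical alternative does mention tightness, but you attach it to Subcase~B; in fact Subcase~B is not a separate case at all but the observation that collapses~A to ``all~$\vs$'', after which tightness (not irredundance) finishes the job. This is exactly the paper's argument, done in two lines.
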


\begin{proof}
Suppose there are distinct $\ve,\vf\in\vec E(T)$ with $\alpha(\ve)=\alpha(\vf) =:\vs$.
Then also $e\ne f$: otherwise $\ve=\fv$ making $\vs$ degenerate,%
   \COMMENT{}
   which cannot happen since $(T,\alpha)$ is over stars.%
   \COMMENT{}

Suppose first that $\ve < \vf$ in the natural order on~$\vec E(T)$. By Lemma~\ref{preservele}, every $\vedash\in\vec E(T)$ with $\ve\le\vedash\le\vf$ satisfies $\vs = \alpha(\ve)\le\alpha(\vedash) \le \alpha(\vf) = \vs$, so $\alpha(\vedash)=\vs$. As $\vs\ne\sv$,%
   \COMMENT{}
   this contradicts our assumption that $(T,\alpha)$ is tight, since $\vs,\sv\in\alpha(\vec F_t)$ for the terminal node $t$ of~$\ve$.%
   \COMMENT{}

Suppose now that $\ve < \fv$. Then $\vs$ is trivial by Lemma~\ref{NewLemma}, contradicting our assumption that $(T,\alpha)$ is essential.

Up to renaming $\ve$ and~$\vf$ as $\ev$ and~$\fv$, this covers all cases.
\end{proof}

As we have seen, a \td\ $(\V,T)$ of a graph or matroid can be recaptured from the structure of $T$ and the family $(\alpha(\ve)\mid\ve\in\vec E(T))$ of oriented separations it induces, i.e., from the $S$-tree $(T,\alpha)$. We can now show that if this $S$-tree is essential%
   \COMMENT{}
   (which we may often assume, cf.\ Lemmas \ref{prune}, \ref{tight} and~\ref{essentialStrees}), it can in turn be recovered from just the {\em set\/} of these oriented separations.

Recall that a subset $\sigma$ of a nested separation system $(\vS,\le\,,\!{}^*)$ {\em splits\/} it if $\sigma$ is the set of maximal elements of some consistent orientation of~$\vS$ and $\vS\sub\dcl(\sigma)$ (which is automatic when $\vS$ is finite). If $\vS$ has no degenerate elements, these sets~$\sigma$ are proper stars in~$\vS$, its {\em splitting stars\/}, and they contain no separations that are trivial in~$\vS$ \cite[Lemma~4.4]{AbstractSepSys} or co-trivial \cite[Lemma~3.5]{AbstractSepSys}. Except for one exceptional case where $\sigma$ contains a small separation, they are precisely the maximal proper stars in~$\vS$ \cite[Lemma~4.5]{AbstractSepSys}.

Let us say that $\vS$ is a nested separation system%
   \COMMENT{}
   {\em over\/} $\F\sub 2^{\vS}$ if all its splitting sets%
   \COMMENT{}
   lie in~$\F$.
   \COMMENT{}

\begin{THM}\label{finiteStrees}
Let $\vS$ be a finite separation system, and $\F\sub 2^\vS\!$ a set of stars.
\begin{enumerate}[\rm(i)]\itemsep0pt\vskip-\smallskipamount\vskip0pt
   \item If $\vS$ is nested and over~$\F$,%
   \COMMENT{}
   then there exists an essential $S$-tree $(T,\alpha)$ over~$\F$ such that $\alpha$ is an isomorphism of tree sets between the edge tree set of~$T$ and the regularization of the essential core of~$\vS$. In particular, the sets $\{\,\alpha(\vec F_t)\mid t\in T\,\}\in\F$ are precisely the sets%
   \COMMENT{}
   splitting~$\vS$.
   \item If $(T,\alpha)$ is an essential $S$-tree over~$\F$, then $\alpha$ is a tree set isomorphism between the edge tree set of~$T$ and a tree set over~$\F$ which is the regularization%
   \COMMENT{}
   of a tree set in~$\vS$.%
   \COMMENT{}
\end{enumerate}
\end{THM}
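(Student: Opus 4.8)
The plan is to prove the two parts more or less independently, using Theorem~\ref{Trees} for~(i) and Lemmas~\ref{injective} and~\ref{preservele} for~(ii).

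For \emph{(i)}, I would let $\tau'$ be the regularization of the essential core of~$\vS$. Deleting from the nested system~$\vS$ its degenerate, trivial and co-trivial separations leaves an essential nested system, hence a finite tree set, and passing to its regularization removes the small elements without creating new trivial ones; so $\tau'$ is a finite regular tree set, with underlying set a subset of~$\vS$. By Theorem~\ref{Trees}\,(i), for $T:=T(\tau')$ the identity is a tree set isomorphism between $\tau'$ and $\vec E(T)=\tau(T)$, so I may regard $\vec E(T)$ as literally equal to~$\tau'$ and take $\alpha$ to be the inclusion $\vec E(T)=\tau'\hookrightarrow\vS$. Then $(T,\alpha)$ is an $S$-tree, because the inclusion commutes with the involutions, and it is \emph{essential}: $\alpha$~is injective (a bijection onto~$\tau'$), so $(T,\alpha)$ is irredundant; each $\vec F_t$ is antisymmetric in~$\vec E(T)$, so $\alpha(\vec F_t)$ is antisymmetric in~$\vS$ and $(T,\alpha)$ is tight; and $\alpha(\vec E(T))=\tau'$ contains no separation trivial in~$\vS$, since neither the essential core nor its regularization does. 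Finally $\alpha$, corestricted to its image, is the identification $\vec E(T)\cong\tau'$, hence a tree set isomorphism between $\vec E(T)$ and the regularization of the essential core of~$\vS$, as required.

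The one point in (i) needing real work is that $(T,\alpha)$ is \emph{over~$\F$}, and more precisely that the sets $\alpha(\vec F_t)$ are exactly the sets splitting~$\vS$. By Lemma~\ref{splittingstars} the $\vec F_t$ are precisely the splitting stars of $\vec E(T)=\tau'$, so this amounts to showing that a subset of~$\tau'$ splits~$\tau'$ if and only if it splits~$\vS$. I~would deduce this from the correspondence between consistent orientations of a separation system and those of its essential core and its regularization, for which I would invoke~\cite{AbstractSepSys}: any orientation of a witness forces a trivial oriented separation into a consistent orientation and excludes the corresponding co-trivial one, and an easy case check --- using that $\vr<\vs$ together with $\vr<\sv$, or $\vs<\vr$ together with $\sv<\vr$, always produces a triviality, hence a deleted separation --- shows that a consistent orientation $O$ of~$\vS$ is consistent iff its restriction $O'$ to~$\tau'$ is, that every consistent orientation of~$\tau'$ arises this way, and that $O$ and~$O'$ have the same set of maximal elements. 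Reading this through the definition of ``splits'' gives the claimed equality, so every $\alpha(\vec F_t)$ splits~$\vS$ and hence lies in~$\F$, while conversely every splitting set of~$\vS$ is one of the $\alpha(\vec F_t)$.

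For \emph{(ii)}, since $\F$ consists of stars, $(T,\alpha)$ is an $S$-tree over stars, so $\alpha$ is injective by Lemma~\ref{injective}, and by Lemma~\ref{preservele}\,(i) it preserves~$\le$ and its image $\tau_0:=\alpha(\vec E(T))$ is nested. Being over stars, $(T,\alpha)$ has no degenerate separation in its image, and being essential it has no trivial one; so $\tau_0$, with the ordering induced from~$\vS$, is an essential nested system, i.e.\ a tree set in~$\vS$. Let $\le'$ be the ordering of its regularization, obtained by deleting all comparabilities between a separation and its inverse. I~claim $\alpha\colon\vec E(T)\to(\tau_0,\le')$ is a tree set isomorphism: it is a bijection commuting with the involutions, and for the orderings, if $\ve<\vf$ in~$\vec E(T)$ then $e\ne f$ (as $\vec E(T)$ is regular), so $\alpha(\ve)\le\alpha(\vf)$ by Lemma~\ref{preservele}\,(i) and $\alpha(\ve)\ne\alpha(\vf)$ by injectivity, whence $\alpha(\ve)<'\alpha(\vf)$; conversely if $\alpha(\ve)<'\alpha(\vf)$, so $\alpha(\ve)<\alpha(\vf)$ holds in~$\vS$ between orientations of distinct separations, then Lemma~\ref{preservele}\,(ii) gives $\ve<\vf$ unless $\alpha(\ve)$ or $\alpha(\fv)$ is trivial (impossible by essentiality) or $\alpha(\ve)=\alpha(\fv)$ (which by injectivity forces $e=f$, contradicting that the separations are distinct). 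Hence $\alpha$ is an order-isomorphism, so $(\tau_0,\le')$ is a tree set --- the regularization of the tree set $\tau_0$ in~$\vS$ --- and it is over~$\F$ because, $\alpha$ being an isomorphism, its splitting sets are the $\alpha$-images of the splitting sets of~$\vec E(T)$, which by Lemma~\ref{splittingstars} are the $\vec F_t$, and $\alpha(\vec F_t)\in\F$ since $(T,\alpha)$ is over~$\F$. I~expect the main obstacle in~(i) to be the equivalence ``splits~$\tau'$ $\Leftrightarrow$ splits~$\vS$'' --- pure bookkeeping about how triviality, co-triviality and smallness behave under forming the essential core and regularizing, which is what~\cite{AbstractSepSys} is there to supply (with a glance also at the edge cases: an edgeless~$T$, or a degenerate element of~$\vS$); in~(ii) the delicate point is that the image $\tau_0$ a priori carries two different orderings, the one induced from~$\vS$ and the one transported from~$\vec E(T)$, and Lemma~\ref{preservele} reconciles them only after one passes to the regularization --- making the exceptional cases of Lemma~\ref{preservele}\,(ii) evaporate, via injectivity and essentiality, is the crux.
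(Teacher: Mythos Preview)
Your proposal is correct and follows essentially the same route as the paper. For~(i) the paper likewise takes $\tau'$ to be the regularization of the essential core, sets $T:=T(\tau')$, and lets $\alpha$ be the identity inclusion; the equivalence ``splits~$\tau'$ $\Leftrightarrow$ splits~$\vS$'' is handled by citing \cite[Lemma~4.4]{AbstractSepSys} (splitting stars are unchanged under passing to the essential core) together with the observation that regularization preserves consistent orientations, which is exactly the bookkeeping you outline. For~(ii) the paper also combines Lemma~\ref{injective} with both parts of Lemma~\ref{preservele} to show $\alpha$ is an order-isomorphism onto the regularization of its image; your case analysis of the exceptional clauses in Lemma~\ref{preservele}\,(ii) is the same argument spelled out, and your explicit verification that the image is over~$\F$ (via Lemma~\ref{splittingstars}) fills in a step the paper leaves implicit.
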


\begin{proof}
(i) Note first that $\vS$ has no degenerate element, because this would form a singleton set splitting~$\vS$ \cite[Lemma~4.4]{AbstractSepSys}, contradicting our assumption that $\vS$ is a separation system over stars.%
   \COMMENT{}
Let $\tau$ be the essential core of~$\vS$.%
   \COMMENT{}
   By \cite[Lemma~4.4]{AbstractSepSys}, the stars splitting~$\tau$ are precisely the sets that split~$\vS$; in particular, they lie in~$\F$.

Since $\tau$ is essential, it has a regularization~$\tau'$. This is a regular tree set, which has the same orientations, and hence the same splitting stars, as~$\tau$ (and~$\vS$).%
   \COMMENT{}
   Let $T = T(\tau')$ be the tree from Lemma~\ref{Ttau}. By Theorem~\ref{Trees}\,(i), the identity%
   \COMMENT{}
   is an isomorphism of tree sets between~$\tau'$ and the edge tree set of~$T$. In particular, the oriented stars~$\vec F_t$ at its nodes~$t$ are the splitting stars of~$\tau'$. Choosing as $\alpha$ the identity on~$\tau' = \vec E(T)$, we obtain $(T,\alpha)$ as desired.

(ii)%
   \COMMENT{}
   By Lemma~\ref{injective}, the map $\alpha$~is injective, and by Lemma~\ref{preservele}\,(i) it preserves the natural ordering of~$\vec E(T)$. By Lemma~\ref{preservele}\,(ii), also $\alpha^{-1}$ preserves the ordering of every {\em antisymmetric\/} subset of its domain.%
   \COMMENT{}

Since $(T,\alpha)$ is essential, this means that $\alpha(\vec E(T))$ is an essential tree set.%
   \COMMENT{}
   It thus has a regularization~$\tau'$, so that $\alpha\colon\vec E(T)\to\tau'$ is an order isomorphism. Since $\alpha$ also commutes with the involutions on the separation systems $\tau(T)$ and~$\vS$, this makes $\alpha$ into an isomorphism of tree sets between $\tau(T)$ and~$\tau'$.%
   \COMMENT{}
 \end{proof}

In a nutshell, Theorem~\ref{finiteStrees}\,(i) tells us that every%
   \COMMENT{}
   tree set $\vS$ over stars can be represented by an $S$-tree over the same stars, and Theorem~\ref{finiteStrees}\,(ii) tells us that this $S$-tree $(T,\alpha)$ from~(i) is unique in the following sense: for any other such $S$-tree $(T',\alpha')$, the composition $\alpha^{-1}\circ\alpha'$ maps the edge tree set of~$T'$ to that of~$T$ as an isomorphism of tree sets.%
   \footnote{Note that $\alpha$ and $\alpha'$ themselves will fail to be tree set isomorphisms if their image~$\vS$ is (essential but) irregular.}
   By Theorem~\ref{Trees}\,(ii), then, the trees $T'$ and~$T$ are also canonically isomorphic: by a graph isomorphism $V(T')\to V(T)$ that induces the map $\alpha^{-1}\circ\alpha'$ on their edges.%
   \COMMENT{}

The nested separation systems whose representations by $S$-trees we have described were all tree sets:%
   \COMMENT{}
   they have neither trivial nor degenerate elements. What about the others?

The nested separation systems $\vS$ that have a degenerate element~$\vs=\sv$ are easy to desribe directly. It is easy to see that such an $S$ has no nontrivial element other than~$s$ \cite[Section~3]{AbstractSepSys}; in particular, it has no other degenerate element. Hence $S$ has a unique consistent orientation~$O$, the set consisting of $\vs$ and all the trivial elements of~$\vS$. Since $s$~is the unique nontrivial witness to their triviality,%
   \COMMENT{}
   $\vs$~is the greatest element of~$O$. All the other $\vr,\vrdash\in O$ satisfy $\vr < \vs=\sv > \rvdash$, so $O$ is like a star~-- except that formally it is not, because stars must not contain degenerate elements.

It remains to consider the inessential finite nested separation systems without degenerate elements. These can also represented by $S$-trees. The only difference is that those $S$-trees will not be unique, even up to graph isomorphism.%
   \COMMENT{}
   But for every such $S$-tree $(T,\alpha)$ the edges of $T$ which $\alpha$ maps to nontrivial separations in $S$ form a (connected) subtree~$T'$, where $(T',\alpha')$ with $\alpha' = \alpha\restricts\vec E(T')$ is the essentially unique $S$-tree from Theorem~\ref{finiteStrees}\,(i).

\bibliographystyle{plain}
\bibliography{collective}

\small\vfill\noindent Version 23.2.2017

\end{document}